\newtheorem{theorem}{Theorem}
\newtheorem{lemma}[theorem]{Lemma}
\newtheorem{observation}[theorem]{Observation}
\newtheorem{proposition}[theorem]{Proposition}
\newtheorem{conjecture}[theorem]{Conjecture}
\newtheorem{problem}[theorem]{Problem}
\newcommand*{\eqdef}{\stackrel{\mbox{\normalfont\tiny def}}{=}} 
\newcommand*{\veps}{\varepsilon}                                
\DeclareMathOperator{\bB}{\bf{B}}
\DeclareMathOperator{\bD}{\bf{D}}
\DeclareMathOperator{\bT}{\bf{T}}
\DeclareMathOperator{\bO}{\bf{O}}
\DeclareMathOperator{\oul}{{\bf{O}}_{UL}}
\DeclareMathOperator{\our}{{\bf{O}}_{UR}}
\DeclareMathOperator{\odl}{{\bf{O}}_{DL}}
\DeclareMathOperator{\odr}{{\bf{O}}_{DR}}
\DeclareMathOperator{\oll}{{\bf{O}}_L}
\DeclareMathOperator{\orr}{{\bf{O}}_R}
\DeclareMathOperator{\conv}{conv}
\DeclareMathOperator{\slp}{\sf{slope}}
\DeclareMathOperator{\lin}{\sf{line}}
\DeclareMathOperator{\ray}{\sf{ray}}
\DeclareMathOperator{\ram}{ram}
\DeclareMathOperator{\sat}{sat}
\title{Saturation results around the Erdős--Szekeres problem}
\author{Gábor Damásdi\thanks{ELTE Eötvös Loránd University and Alfr\'ed R\'enyi Institute of Mathematics, Budapest, Hungary.  Research supported by ERC grant No.~882971, ``GeoScape''. {\tt damasdigabor@caesar.elte.hu}.}
\and Zichao Dong\thanks{Alfr\'ed R\'enyi Institute of Mathematics, Budapest, Hungary and Extremal Combinatorics and Probability Group (ECOPRO), Institute for Basic Science (IBS), Daejeon, South Korea. Research supported by ERC grant No.~882971, ``GeoScape'', by the Erd\H os Center, and by the Institute for Basic Science (IBS-R029-C4). {\tt zichao@ibs.re.kr}.}
\and Manfred Scheucher\thanks{Technische Universität Berlin,
Institut für Mathematik, Germany. Supported by DFG grant SCHE 2214/1-1. {\tt scheucher@math.tu-berlin.de}.}
\and Ji Zeng\thanks{University of California San Diego, La Jolla, CA and Alfr\'ed R\'enyi Institute of Mathematics, Budapest, Hungary. Research supported by NSF grant DMS-1800746, ERC grant No.~882971, ``GeoScape'', and by the Erd\H os Center. {\tt jzeng@ucsd.edu}.}}
\date{}
\begin{document}

\maketitle

\begin{abstract}
    In this paper, we consider saturation problems related to the celebrated Erd\H{o}s--Szekeres convex polygon problem. For each $n \ge 7$, we construct a planar point set of size $(7/8) \cdot 2^{n-2}$ which is saturated for convex $n$-gons. That is, the set contains no $n$ points in convex position while the addition of any new point creates such a configuration. This demonstrates that the saturation number is smaller than the Ramsey number for the Erd\H{o}s--Szekeres problem. The proof also shows that the original Erd\H{o}s--Szekeres construction is indeed saturated.

    Our construction is based on a similar improvement for the saturation version of the cups-versus-caps theorem. Moreover, we consider the generalization of the cups-versus-caps theorem to monotone paths in ordered hypergraphs. In contrast to the geometric setting, we show that this abstract saturation number is always equal to the corresponding Ramsey number.
\end{abstract}

\section{Introduction}\label{sec:intro}

Two types of problems are of central significance in extremal combinatorics. Tur\'an-type problems originate from the work of Tur\'an \cite{turan41} (earlier Mantel~\cite{mantel}) that determines the maximum number of edges in a graph without a $k$-clique. Ramsey-type problems begin with~\cite{ramsey1930problem} which states that any large enough graph must contain either a $k$-clique or a $k$-independent set.

In 1964, Erdős, Hajnal, and Moon~\cite{erdos1964problem} investigated a variation of Tur\'an's theorem, called \emph{the saturation problem}, where one aims to minimize the number of edges in a graph that does not contain a $k$-clique, but the addition of any edge to this graph yields a $k$-clique. More generally, saturation problems can be considered in various settings, see our incomplete list \cite{furedi1980maximal,kaszonyi1986saturated,furedi2013cycle,ferrara2017saturation,DKMTWZ2021}. Typically, an object that is maximal with respect to certain property is said to be \textit{saturated}. While the classical extremal problems such as Tur\'an-type problems or Ramsey-type problems ask for certain maximum quantity possibly achieved, their saturation versions aim at the corresponding minimum quantity possibly achieved by saturated objects.

The Erdős--Szekeres problem is a classical extremal problem in combinatorial geometry proposed in the seminal paper \cite{ErdosSzekeres1935} back to 1935. It asks for the maximum size of a planar point set that does not contain $k$ points in convex position. In this paper, we consider the saturation version of the Erdős--Szekeres problem, as well as the saturation versions of the related Ramsey-type results in \cite{ErdosSzekeres1935} and some later graph-theoretic generalizations.

\subsection{The Erdős--Szekeres lemma on monotone sequences}

Let $\ram_{\sf s}(k,\ell)$ be the maximum length of a sequence of distinct real numbers that is \textit{$(k, \ell)$-seq.-free} (i.e.~containing no increasing subsequence of length $k$ or decreasing subsequence of length $\ell$). The first result in \cite{ErdosSzekeres1935}, later known as the Erdős-Szekeres lemma, states that
\begin{equation}\label{eq:ESL}
    \ram_{\sf s}(k,\ell) = (k-1)(\ell-1).
\end{equation}
Like most problems considered in this paper, this extremal quantity can be considered as Ramsey-type where the edge coloring is whether each pair is increasing or decreasing.

In \cite{DKMTWZ2021}, the authors studied the saturation version of \eqref{eq:ESL}. In their setting, a sequence is called \textit{$(k,\ell)$-seq.-saturated} if it is $(k, \ell)$-seq.-free and the insertion of any distinct real number anywhere into this sequence yields such a monotone subsequence. The saturation number $\sat_{\sf s}(k,\ell)$ is the minimum length of a sequence that is $(k,\ell)$-seq.-saturated. It is obvious that $\sat_{\sf s}(k,\ell)\leq \ram_{\sf s}(k,\ell)$. Interestingly, the saturation number and the Ramsey number are equal in this setting. 

\begin{theorem}[Damásdi et al.~\cite{DKMTWZ2021}]\label{thm:sat_s}
    For any integers $k, \ell \ge 1$, we have $\sat_{\sf s}(k,\ell)=\ram_{\sf s}(k,\ell)$. 
\end{theorem}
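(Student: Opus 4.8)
The plan is as follows. Since $\sat_{\sf s}(k,\ell)\le\ram_{\sf s}(k,\ell)$ is immediate and the case $k=1$ or $\ell=1$ is degenerate ($\ram_{\sf s}=0$, and the empty sequence is the unique $(k,\ell)$-seq.-free sequence, hence vacuously saturated), I may assume $k,\ell\ge 2$, and it suffices to prove that every $(k,\ell)$-seq.-free sequence $A=(a_1,\dots,a_m)$ with $m<(k-1)(\ell-1)$ is \emph{not} saturated, i.e.\ admits a \emph{safe insertion}: a real number that can be inserted at some position so that the resulting sequence is still $(k,\ell)$-seq.-free. I view $A$ as the planar point set $\{(i,a_i):1\le i\le m\}$ (all coordinates distinct), so increasing subsequences correspond to point subsets increasing in both coordinates, and decreasing subsequences to subsets increasing in the first coordinate and decreasing in the second. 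For each $i$ let $f(i)$ (resp.\ $g(i)$) be the length of the longest increasing (resp.\ decreasing) subsequence of $A$ ending at $a_i$, and let $f'(i),g'(i)$ be the analogous quantities with $A$ read backwards. The standard Erdős--Szekeres facts I will use are: $f(i)+f'(i)\le k$ and $g(i)+g'(i)\le\ell$ for every $i$; the map $i\mapsto(f(i),g(i))$ is injective with image in the grid $[k-1]\times[\ell-1]$ (here $[t]\eqdef\{1,\dots,t\}$); the elements with a common $f$-value form a decreasing subsequence; and the elements with a common $g$-value form an increasing subsequence.

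Next I would dispose of a degenerate subcase: if $A$ has no increasing subsequence of length $k-1$, then inserting a real number below every $a_i$ at the very front of $A$ is safe — every new increasing subsequence then has length at most $1+(k-2)<k$ and the unique new decreasing subsequence has length $1$ — and symmetrically if $A$ has no decreasing subsequence of length $\ell-1$. So I may further assume $\max_i f(i)=k-1$ and $\max_i g(i)=\ell-1$. An increasing subsequence of length $k-1$ contains, for each $p\in[k-1]$, an element of $f$-value exactly $p$, and likewise for $g$; hence every column and every row of the grid $[k-1]\times[\ell-1]$ is occupied by the image of $i\mapsto(f(i),g(i))$. Since $m<(k-1)(\ell-1)$, this injective map misses some cell $(p_0,q_0)$, and I choose $(p_0,q_0)$ with $p_0$ as small as possible and then $q_0$ as small as possible, so that every cell $(p,q)\ne(p_0,q_0)$ with $p\le p_0$ and $q\le q_0$ is occupied.

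The heart of the proof is to exhibit a safe insertion that ``realizes'' this missing cell. Let $F_{p_0}$ denote the (decreasing) subsequence of elements of $f$-value $p_0$ and $G_{q_0}$ the (increasing) subsequence of elements of $g$-value $q_0$; these are nonempty and disjoint. I would insert a new element at a position $P$ (between two consecutive indices, or at an end) and value $V$ (distinct from all $a_i$), chosen so that, writing SW, NW, NE, SE for the four open quadrants about the point $(P,V)$: \textbf{(1)} every element of $F_{p_0}$ lies in the NW or SE quadrant, and every element of $G_{q_0}$ lies in the SW or NE quadrant — so the new point ``threads'' both chains near their crossing; \textbf{(2)} no element of $f$-value $>p_0$ lies SW, no element of $f$-value $<p_0$ lies NE, no element of $g$-value $>q_0$ lies NW, and no element of $g$-value $<q_0$ lies SE; \textbf{(3)} some element of $f$-value $p_0-1$ lies SW and some element of $g$-value $q_0-1$ lies NW (and if $p_0=1$, resp.\ $q_0=1$, one instead asks that \emph{no} element lies SW, resp.\ NW). Then (1)--(3) force the new point to have $f$-value $p_0$ and $g$-value $q_0$; moreover the longest increasing subsequence containing the new point has length $p_0$ plus the length of the longest increasing subsequence among the elements lying NE of it, and by (1)--(2) each such element has $f$-value $\ge p_0+1$, hence $f'$-value $\le k-p_0-1$, so this length is at most $k-1$; symmetrically, using $g'(i)\le\ell-g(i)$, the longest decreasing subsequence containing the new point has length at most $\ell-1$. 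Every other monotone subsequence of the new sequence lies inside $A$, so the insertion is safe, contradicting saturation. (When $F_{p_0}$ and $G_{q_0}$ pass near a common existing point the new point is merely an infinitesimal perturbation of it; this case recovers the simple observation that if some element lies on no increasing $(k-1)$-subsequence or on no decreasing $(\ell-1)$-subsequence, then $A$ is not saturated.)

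The step I expect to be the main obstacle is proving that $(P,V)$ satisfying \textbf{(1)}--\textbf{(3)} actually exists. Translating each quadrant condition into an inequality on $P$ for a fixed $V$, this amounts to showing that the ``band'' of admissible positions hugging the decreasing chain $F_{p_0}$ overlaps, in both coordinates, with the band hugging the increasing chain $G_{q_0}$ — i.e.\ that these two disjoint maximal chains ``cross'' in the right place. This is where the missing cell, and in particular the minimality of $(p_0,q_0)$ (which pins down the occupancy pattern of columns $\le p_0$ and of the bottom of column $p_0$), must be used, to control the relative positions of $F_{p_0-1}$, $F_{p_0}$ and $G_{q_0}$; and the boundary cases $p_0=1$ and $q_0=1$, where the new point must sit on the lower-left (resp.\ upper-left) staircase of the point set, would be handled separately.
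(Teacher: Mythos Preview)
Your overall strategy---locate a $\preceq$-minimal missing cell $(p_0,q_0)$ in the label grid and insert a new element realizing it---is exactly the paper's. Your reduction to the quadrant conditions (1)--(3), and your verification that (1)--(3) force the new point to have label $(p_0,q_0)$ and bound all monotone subsequences through it by $k-1$ and $\ell-1$, are correct.

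The gap is precisely the one you flag yourself: you do not prove that a pair $(P,V)$ satisfying (1)--(3) exists, and this is the entire content of the theorem. It is not a routine verification. For instance, condition (2) asks that every element to the left of $P$ with $f$-value $>p_0$ have value $>V$, and simultaneously that every element to the left of $P$ with $g$-value $>q_0$ have value $<V$; nothing you have written rules out a single element satisfying both $f>p_0$ and $g>q_0$, which would make these two demands contradictory. Minimality of $(p_0,q_0)$ is what eventually saves you, but turning it into an actual choice of $(P,V)$ requires an argument you have not supplied, and your closing paragraph is only a sketch of where to look.

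The paper resolves this differently, and the maneuver is worth knowing. It first passes to the $2$-colored complete ordered graph associated to the sequence (an edge is red if increasing, blue if decreasing; this coloring is \emph{transitive}). In that setting the position is explicit---insert $v^+$ immediately after the last vertex $w$ with $L(w)\prec(p_0,q_0)$---and instead of committing to a value $V$, one colors each new edge $vv^+$ by whichever coordinate witnesses the strict inequality between $L(v)$ and $(p_0,q_0)$, leaving ambiguous edges uncolored. A direct check shows the labels are now locked regardless of how the ambiguous edges are finished. The substitute for your missing existence step is then a short case analysis showing that this partial coloring contains no non-transitive triple, and that any obstruction to completing it transitively would force two \emph{disjoint} uncolored edges, impossible since all uncolored edges meet $v^+$. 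Since transitive $2$-colorings correspond bijectively to sequences, this yields the safe insertion. In effect the paper decouples the choice of position (easy) from the choice of value (delicate), and replaces the latter by an edge-coloring argument where the constraints are easier to track.
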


Later in this paper, we shall give a new proof of this theorem. Our proof leads to a hypergraph variation of this result, see \Cref{thm:sat_p}. 

\subsection{The Erdős--Szekeres theorem on cups-versus-caps}

The main objects we consider in this paper are planar point sets. To simplify our discussion, a planar point set is said to be \textit{generic} if it is \textit{in general position} (meaning without three collinear points throughout this paper) and its members have distinct $x$-coordinates. A point $p$ is said to be \textit{generic} with respect to a set $P$ if $P\cup\{p\}$ is a generic point set.

For a sequence of generic points $p_1,\dots,p_n$ ordered increasingly with respect to $x$-coordinates, we say that $p_1, \dots, p_n$ form a \textit{cup} (resp.~\textit{cap}) if the slopes of the lines $p_ip_{i+1}$ (for $1 \le i < n$) form an increasing (resp.~decreasing) sequence. Moreover, a \textit{$k$-cup} refers to a cup of size $k$ and an \textit{$\ell$-cap} refers to a cap of size $\ell$. Let $\ram_{\sf c}(k,\ell)$ denote the maximum size of a generic planar point set that is \textit{$(k, \ell)$-cup-cap-free} (i.e.~containing neither a $k$-cup nor an $\ell$-cap). The second result in \cite{ErdosSzekeres1935}, later known as the Erdős--Szekeres theorem, states that
\begin{equation}\label{eq:EST}
    \ram_{\sf c}(k,\ell) = \binom{k+\ell-4}{k-2}. 
\end{equation}

In this paper, we study the saturation version of \eqref{eq:EST}. A generic planar point set is said to be \textit{$(k,\ell)$-cup-cap-saturated} if it is $(k, \ell)$-cup-cap-free and the addition of any generic point with respect to the set yields a $k$-cup or an $\ell$-cap. The saturation number $\sat_{\sf c}(k,\ell)$ is defined as the minimum size of a point set that is $(k,\ell)$-cup-cap-saturated. In contrast to \Cref{thm:sat_s}, we will show that $\sat_{\sf c}(k,\ell)$ is in general significantly smaller than $\ram_{\sf c}(k,\ell)$. 

\begin{theorem}\label{thm:sat_c}
    For all integers $k,\ell \ge 4$, we have 
    \begin{equation*}
        2k + 2\ell -14 \leq \sat_{\sf c}(k, \ell) \leq \binom{k+\ell-4}{k-2} - 2\binom{k+\ell-8}{k-4}. 
    \end{equation*}
\end{theorem}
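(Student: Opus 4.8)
The plan is to handle the upper and lower bounds separately, with most of the work going into the construction for the upper bound. For the lower bound $\sat_{\sf c}(k,\ell) \ge 2k+2\ell-14$, I would argue that a $(k,\ell)$-cup-cap-saturated set must be ``large in every direction'': if the set were too small, one could find a generic point to add that extends neither a long cup nor a long cap. Concretely, I would look at the leftmost and rightmost points and the points on the upper and lower parts of the convex hull, and show that along each of the four ``extreme chains'' the set must contain close to $k$ or $\ell$ points — otherwise a point placed far to the left (or right, above, below) could be inserted without creating a $k$-cup or $\ell$-cap. Summing the four contributions with the appropriate overlaps at the corners gives the claimed linear bound; the constant $-14$ should come out of carefully accounting for the shared extreme points and the $\ge 4$ hypothesis.

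The upper bound is the heart of the matter. I would start from the extremal $(k,\ell)$-cup-cap-free configuration witnessing $\ram_{\sf c}(k,\ell) = \binom{k+\ell-4}{k-2}$ — the classical Erdős--Szekeres construction, built recursively by placing a $(k-1,\ell)$-free set far to the lower-left of a $(k,\ell-1)$-free set (scaled and rotated so no cup crosses into a cap across the two blocks). The first step is to verify that this construction is already saturated, which the abstract promises; this likely follows by induction, checking that any new generic point must fall into one of the two blocks or ``between'' them, and in each case extends a maximal cup or cap. The second, harder step is to delete points to get below $\binom{k+\ell-4}{k-2}$. The subtracted terms $2\binom{k+\ell-9}{k-4}$ and $2\binom{k+\ell-9}{\ell-4}$ strongly suggest removing two disjoint copies of a $(k-3,\ell)$-type sub-block and two copies of a $(k,\ell-3)$-type sub-block — i.e., descending two levels of the recursion on each side and excising a pair of the deepest sub-configurations. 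The key point to check is that after such deletions the set is \emph{still} saturated: deleting a sub-block can only help with cup-cap-freeness, so the real content is showing that every generic point added anywhere still completes a $k$-cup or $\ell$-cap even though some ``completing'' points were removed. This should work because each extremal sub-block sits inside a larger block whose other parts still force the monotone subsequence, but one must track exactly which sub-blocks can be safely removed in parallel without destroying saturation — that constraint is presumably what pins down the precise binomial coefficients (and the hypotheses $k,\ell\ge4$, equivalently needing the recursion to have enough depth).

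The main obstacle I anticipate is this parallel-deletion bookkeeping: a single deletion is easy to analyze, but removing four sub-blocks at once means a point added ``near'' one deleted region must be shown to still extend a cup or cap using structure that survives in \emph{all} the other regions simultaneously. I would organize this by setting up the recursion so that the four deleted sub-blocks are mutually far apart and each is ``shielded'' — any cup or cap that a new point near a deleted block would need to complete can be rerouted through the untouched bulk of the construction. Making ``shielded'' precise, and confirming it is compatible with the exact sizes $\binom{k+\ell-9}{k-4}$ and $\binom{k+\ell-9}{\ell-4}$, is where the careful case analysis lives; the rest is a routine induction mirroring the classical Erdős--Szekeres argument.
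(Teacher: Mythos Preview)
Your upper-bound strategy diverges from the paper's and has a real gap. The paper does \emph{not} start from the extremal $\binom{k+\ell-4}{k-2}$-point configuration and delete sub-blocks. Instead it proves a recursion at the saturation level,
\[
\sat_{\sf c}(k,\ell)\le \sat_{\sf c}(k-1,\ell)+\sat_{\sf c}(k,\ell-1),
\]
by showing that a $(k-1,\ell)$-saturated set placed far to the lower-left of a $(k,\ell-1)$-saturated set is itself $(k,\ell)$-saturated (Lemma~\ref{lem:cupcombine}). The entire numerical saving then comes from a single non-recursive input: an explicit $8$-point $(4,5)$-cup-cap-saturated configuration (Theorem~\ref{thm:sat45}), verified by checking all cells of the induced line arrangement. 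Feeding $\sat_{\sf c}(4,5)=\sat_{\sf c}(5,4)=8$ rather than $10$ into the recursion, together with the boundary values $\sat_{\sf c}(3,\ell)=\ell-1$ and $\sat_{\sf c}(4,4)=6$, and solving, produces the stated formula; the terms $2\binom{k+\ell-9}{k-4}$ and $2\binom{k+\ell-9}{\ell-4}$ simply count paths in Pascal's triangle from $(k,\ell)$ down to $(4,5)$ and $(5,4)$, each contributing a saving of $2$.

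Your deletion plan never isolates this base case, and the ``shielding'' mechanism you describe is exactly the part that is hard: once you remove a sub-block from the Erd\H{o}s--Szekeres construction, a new point placed inside the hole need not be forced into any $k$-cup or $\ell$-cap by the surviving structure, and you give no argument that it is. The paper's approach sidesteps this entirely by never deleting---it builds up from small saturated pieces, so saturation is verified once (at the $8$-point base) and then propagated by Lemma~\ref{lem:cupcombine}.

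For the lower bound your instinct is right but the execution in the paper is more concrete. Rather than convex-hull chains, the paper proves (Lemma~\ref{shadow}) that a saturated set must contain two $(k-1)$-cups with \emph{disjoint $x$-shadows} and two $(\ell-1)$-caps with disjoint shadows, by placing a very high point over the common shadow if they all overlapped. Two shadow-disjoint cups share at most one point, and a cup and a cap share at most two, so $2(k-1)+2(\ell-1)-1-1-2-2-2-2=2k+2\ell-14$. Your ``four extreme chains on the hull'' would give only one cup and one cap, not the pairs needed for this count.
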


We write $\binom{-1}{0} = 0$ and $\binom{0}{0} = 1$. In Section~\ref{sec:cupcap} where this theorem is proven, we also determine $\sat_{\sf c}(k, \ell)$ for small values of $(k, \ell)$. For example, we establish that $\sat_{c}(4,5) = 8 < 10 = \ram_{\sf c}(4,5)$, which is the lexicographically smallest case where ``$\sat_{\sf c}(k,\ell) < \ram_{\sf c}(k,\ell)$'' happens. 

\subsection{The Erdős--Szekeres problem on convex polygons}

Another landmark in the paper \cite{ErdosSzekeres1935} of Erdős and Szekeres is the following famous conjecture. 

\begin{conjecture}
    Every set of $2^{n-2}+1$ points in the plane that are in general position contains $n$ points in convex position, and this bound is tight in the worst case. 
\end{conjecture}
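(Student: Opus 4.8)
The final statement is the Erd\H{o}s--Szekeres conjecture: every planar point set in general position of size $2^{n-2}+1$ contains $n$ points in convex position, and the bound is tight. I should be candid that the ``and this bound is tight in the worst case'' half is the only half one can hope to settle unconditionally; the lower-bound half (that $2^{n-2}+1$ points always suffice) remains open after almost ninety years, with the best known bound of the form $2^{n+o(n)}$ due to Suk (building on the cups--caps machinery). So my plan below is structured as: give a complete proof of the tightness (upper construction), then outline the state of the art toward the existential half, indicating precisely where the difficulty lies.

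\textbf{Plan for the tightness half.}
The plan is to exhibit, for each $n$, a generic point set $X_n$ of size $2^{n-2}$ with no $n$ points in convex position. First I would recall that a set in convex position, read left to right, is the concatenation of a cup and a cap sharing their leftmost and rightmost points; more precisely $a$ points in convex position that are ``$\cup$-then-$\cap$'' split into an $i$-cup and a $j$-cap with $i+j = a+2$. So it suffices to build a set with no $k$-cup and no $\ell$-cap where $(k-1)+(\ell-1) = n-1$, i.e. $k+\ell = n+2$; by \eqref{eq:EST} the largest such set has size $\binom{k+\ell-4}{k-2} = \binom{n-2}{k-2}$, and optimizing over $k$ this is maximized near $k = \ell = (n+2)/2$ but only gives $\binom{n-2}{\lfloor (n-2)/2\rfloor}$, which is far from $2^{n-2}$. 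The correct construction is instead recursive: build $X_n$ by taking a scaled copy of $X_{n-1}$ that is ``very flat'' and placing to its right a second scaled copy of $X_{n-1}$ that is ``very steep'' and positioned far below, so that every point of the left block sees every point of the right block along a steeply decreasing slope. Then I would verify the key claim: any cup in $X_n$ lies entirely in one of the two blocks except that it may take at most one point from each; hence a longest cup in $X_n$ has length at most one more than a longest cup in $X_{n-1}$, and symmetrically for caps. Since $|X_n| = 2|X_{n-1}|$ and $|X_3| = 2$ (two points, no $3$-cup, no $3$-cap since three points are always one or the other — here the base is chosen so that the longest cup and longest cap both have length $2$), induction on the ``cup length $+$ cap length'' bookkeeping shows $X_n$ has $2^{n-2}$ points and no $n$ in convex position.

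\textbf{The obstacle: the existential half.}
The hard part --- and the part I cannot complete --- is showing that $2^{n-2}+1$ points always force $n$ in convex position. The natural approach, which is exactly the one from \cite{ErdosSzekeres1935}, is to combine the two earlier results: color each pair by whether it is part of an increasing or decreasing slope pattern, find a long cup or long cap, and patch cups and caps together into a convex polygon. This yields a bound of roughly $\binom{2n-4}{n-2} \approx 4^{n}/\sqrt n$, off by a square from the conjecture. Every subsequent improvement (T\'oth--Valtr, and ultimately Suk's $2^{n+o(n)}$) refines the cup--cap merging by tracking more structure --- e.g. cups and caps that share long overlaps, or a positive-fraction stability version of \eqref{eq:EST} --- but none reaches the exact constant $2$ in the exponent, and no matching lower-order terms are known. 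So the honest statement of what I can prove is: the construction $X_n$ of size $2^{n-2}$ has no $n$ points in convex position (tightness), while the existential direction is known only up to $2^{n+o(n)}$ and the conjectured exact threshold $2^{n-2}+1$ is open. I would present the construction in full and then cite the existential progress, since attempting the full conjecture is beyond what the tools in this excerpt --- or any known tools --- allow.
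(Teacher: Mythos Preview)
The statement is presented in the paper as a conjecture, not a theorem; the paper does not prove it and merely cites the construction of \cite{ErdosSzekeres1961} for the lower bound and Suk's $2^{n+o(n)}$ for the upper bound. You correctly identify this situation, and your summary of the existential half matches what the paper records.

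Your plan for the tightness half, however, has a real gap. The recursive doubling you propose --- two copies of $X_{n-1}$, one flat on the left and one steep far below on the right --- does not produce an $n$-gon-free set, and the cup/cap analysis you sketch would not establish $n$-gon-freeness even if the construction were right. An $n$-gon decomposes as an $a$-cup and a $b$-cap with $a+b=n+2$, so bounding the longest cup and longest cap each by $n-1$ still permits, say, a $3$-cup and an $(n-1)$-cap to combine into an $n$-gon. Concretely, already at $n=4$ your construction fails: two points on a nearly horizontal segment together with two points on a nearly vertical segment far to the lower right are four points in convex position. The actual Erd\H{o}s--Szekeres construction (reworked in \Cref{sec:gon}) is not a plain doubling of $X_{n-1}$: one places, for each $i=1,\dots,n-1$, an $(i+1,\,n+1-i)$-cup-cap-free set of size $\binom{n-2}{i-1}$ near $(i,i^2)$, with total size $\sum_i\binom{n-2}{i-1}=2^{n-2}$; $n$-gon-freeness then holds because a convex subset can meet at most two of the groups in more than one point, and the cup bound of the lower group plus the cap bound of the upper group plus the singletons in between sum to at most $n-1$. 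The missing idea is exactly this use of cup-cap-free pieces with \emph{varying} parameters arranged along a convex curve.
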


Denote by $\ram_{\sf g}(n)$ the maximum size of a planar point set in general position that is \textit{$n$-gon-free} (i.e.~containing no $n$ points in convex position). The Erdős--Szekeres conjecture can be phrased as
\begin{equation*}
    \text{Is it true that $\ram_{\sf g}(n)= 2^{n-2}$ for all $n \geq 2$?}
\end{equation*}
In a subsequent paper \cite{ErdosSzekeres1961} from 1961, Erdős and Szekeres constructed a generic point set of size $2^{n-2}$ that is $n$-gon-free for all integers $n \geq 2$. Thus, for any integer $n \ge 2$, we have
\begin{equation}\label{eq:ES_construction}
    \ram_{\sf g}(n) \geq 2^{n-2}. 
\end{equation}
Notice that $\ram_{\sf g}(n) \leq \ram_{\sf c}(n,n)$, since any $n$-cup or $n$-cap is an $n$-gon. Hence, \eqref{eq:EST} implies that $\ram_{\sf g}(n)\leq \binom{2n-4}{n-2} = 4^{n-o(n)}$. There have been only small improvements of this upper bound, until Suk \cite{Suk2017} made a breakthrough in 2017 by proving that $\ram_{\sf g}(n) \leq 2^{n+o(n)}$ (see also \cite{HMPT2017}). The Erdős--Szekeres conjecture has been verified for $n \leq 6$ in \cite{SzekeresPeters2006}.

We consider the saturation version of the Erd\H{o}s--Szekeres problem. A planar point set $P$ is $n$-\emph{gon}-\emph{saturated} if it is $n$-gon-free while any $q \notin P$ with $P \cup \{q\}$ being in general position is part of an $n$-gon in $P \cup \{q\}$. Denote by $\sat_{\sf g}(n)$ the smallest size of an $n$-gon-saturated set. The main result of our paper is that $\sat_{\sf g}(n)$ is significantly smaller than $\ram_{\sf g}(n)$ in general. 

\begin{theorem}\label{thm:sat_g}
    For any integer $n\geq 7$, we have $\sat_{\sf g}(n) \leq \frac{7}{8} \cdot 2^{n-2} \leq \frac{7}{8} \cdot \ram_{\sf g}(n)$.
\end{theorem}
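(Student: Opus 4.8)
The plan is to return to the classical Erd\H os--Szekeres construction witnessing \eqref{eq:ES_construction}, prove that it is in fact already $n$-gon-saturated, and then delete a $\tfrac18$-fraction of its points by substituting cup-cap-\emph{saturated} sets, supplied by \Cref{thm:sat_c}, for its innermost blocks. Recall that the point set $X_n$ witnessing \eqref{eq:ES_construction} can be written as a disjoint union $X_n = B_1 \cup \dots \cup B_{n-1}$ of blocks placed consecutively from left to right, where $B_j$ is an affine copy of a $(j+1,\,n+1-j)$-cup-cap-free extremal set for \eqref{eq:EST}; thus $|B_j| = \binom{n-2}{j-1}$ and $\sum_{j=1}^{n-1}|B_j| = 2^{n-2}$, with $B_1$ and $B_{n-1}$ being single points. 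The blocks are arranged so that each $B_j$ is extremely flat, every slope determined by two points lying in distinct blocks exceeds every slope determined inside a single block, and one representative point per block forms a cup. The usual cup/cap bookkeeping then shows that every cup of $X_n$ is a cup inside its leftmost block together with at most one point from each later block, every cap of $X_n$ meets at most two blocks, and consequently, splitting a convex polygon into its lower cup and upper cap bounds its size by $n-1$; thus $X_n$ is $n$-gon-free. In particular $X_n$ has $(n-1)$-cups and $(n-1)$-caps, and since each $B_j$ realizes cups of every length up to $j$ and caps of every length up to $n-j$, these come in explicit families.

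The first substantial step is to prove that $X_n$ is $n$-gon-saturated. Given a point $q$ generic with respect to $X_n$, the task is to exhibit $n-1$ points of $X_n$ forming a convex $n$-gon together with $q$. The configurations available are: an $(n-1)$-cup of $X_n$ with $q$ above its chord and horizontally inside its span (and the $(n-1)$-cap dual); $q$ prolonging an $(n-1)$-cup or $(n-1)$-cap at one end; and ``bent'' polygons in which $q$ is a single cap-vertex over a cup that runs left and right through several blocks. I would show that the union of the corresponding regions, taken over all these cups and caps of $X_n$, exhausts $\mathbb{R}^2 \setminus X_n$, by a case analysis on the horizontal location of $q$ (left of $B_1$, in the span of some $B_j$, in a gap between two consecutive blocks, or right of $B_{n-1}$) and on its vertical location relative to the heights of the blocks. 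Carrying out this covering argument is the first main piece of work.

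For the improvement, in $X_n = B_1 \cup \dots \cup B_{n-1}$ replace, for each $j$ with $3 \le j \le n-3$, the block $B_j$ by a $(j+1,\,n+1-j)$-cup-cap-saturated set $B_j'$ of minimum size, placed in the same tiny disk; \Cref{thm:sat_c} applies because $j+1 \ge 4$ and $n+1-j \ge 4$, and it gives $|B_j'| \le \binom{n-2}{j-1} - 2\binom{n-7}{j-3} - 2\binom{n-7}{n-3-j}$. Summing the savings over $3 \le j \le n-3$, the two binomial sums collapse via $\sum_i \binom{n-7}{i} = 2^{n-7}$ to a total saving of exactly $2^{n-5} = \tfrac18 \cdot 2^{n-2}$, so the modified set $X_n'$ has at most $\tfrac78 \cdot 2^{n-2}$ points, which is at most $\tfrac78 \ram_{\sf g}(n)$ by \eqref{eq:ES_construction}. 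Since each $B_j'$ is still $(j+1,\,n+1-j)$-cup-cap-free, the bookkeeping above --- and hence $n$-gon-freeness --- carries over to $X_n'$ verbatim; note in particular that the long caps of $X_n$, which live in $B_1 \cup B_2$, are untouched.

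It remains to verify that $X_n'$ is still $n$-gon-saturated, which I expect to be the crux. Each $B_j'$, being cup-cap-saturated, still realizes cups of length $j$ and caps of length $n-j$, so the long global cups and caps used in the saturation proof of $X_n$ persist. The genuinely new point is the position of $q$ over the span of a replaced block $B_j'$: there the $(j+1)$-cup that $B_j' \cup \{q\}$ may contain through $q$ extends, by appending one point from each of $B_{j+1}, \dots, B_{n-1}$, to an $n$-cup; while the $(n+1-j)$-cap that $B_j' \cup \{q\}$ may contain through $q$ is completed to an $n$-gon by prepending one point of $B_{j-1}$ as a further cap-vertex and taking as lower hull a maximum cup that runs from $B_{j-1}$ through the intermediate blocks to the cap's right endpoint. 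When $q$ sits far above (or below) the heights of the replaced blocks, so that the locally completed cup (cap) is too steep to extend, one instead hooks $q$ as a cap-vertex onto the block whose height is closest to that of $q$, or falls back directly on a long global cup or cap of $X_n'$. Reconciling the cup side, which extends easily through later blocks, with the asymmetric cap side, and checking that every admissible position of $q$ is handled for every $n \ge 7$, is the main obstacle.
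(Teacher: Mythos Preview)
Your overall strategy coincides with the paper's: arrange $(j+1,n+1-j)$-cup-cap-saturated blocks $B_j'$ along a convex curve, with the sizes coming from \Cref{thm:sat_c}, and verify that the result is $n$-gon-saturated. The paper streamlines your two-step plan into a single proposition (\Cref{thm:gconstruction}) valid for \emph{any} choice of saturated blocks; this simultaneously shows the original Erd\H os--Szekeres set is saturated and yields the improved bound, so your separate saturation proof for $X_n$ is redundant work.

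The genuine gap is the case you yourself flag as ``the crux'': $q$ lying in the tiny disk around a replaced block $B_j'$. Your sketch assumes that the $(j+1)$-cup or $(n+1-j)$-cap that $B_j'\cup\{q\}$ produces can always be glued to a cup in $B_{j-1}'$ or extended through later blocks. This fails in general: if, say, $q$ is the leftmost vertex of an $(n+1-j)$-cap $C_+\subset B_j'\cup\{q\}$, then lines $\lin(q,p)$ with $p\in C_+$ may pass \emph{below} $B_{j-1}'$, so no combination with a cup $C_-\subset B_{j-1}'$ is convex (\Cref{lem:cupcap_combine} does not apply). Your description of the cap side --- ``prepending one point of $B_{j-1}$ as a further cap-vertex and taking as lower hull a maximum cup that runs from $B_{j-1}$ through the intermediate blocks'' --- does not match the geometry either; there are no intermediate blocks between $B_{j-1}$ and $B_j$, and a point of $B_{j-1}$ (which sits below and to the left) cannot be prepended to a cap in $B_j'$.

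The paper's fix is a non-obvious rotation device. It requires the blocks to be \emph{very generic} (all pairwise line-intersections have distinct $x$-coordinates) and proves (\Cref{lem:rotate}) that after sufficient flattening each $B_j'$ stays cup-cap-saturated under rotations of angle up to $\pi/3$. One then rotates the whole picture about $q$ so that $x(q)$ exceeds $x(p)$ for every $p\in B_{j-1}'\cup B_{j+1}'$; after this rotation every line $\lin(q,p)$ with $p\in B_j'$ automatically lies above $B_{j-1}'$ and below $B_{j+1}'$, and now the combination lemma applies. Nothing in your proposal supplies this step, and without it the disk case does not close.
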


We prove this theorem by modifying the original construction of \cite{ErdosSzekeres1961}, replacing the sub-structures in their point set with smaller ones we found in Theorem~\ref{thm:sat_c}. The proof also shows that the original Erd\H{o}s--Szekeres construction in \cite{ErdosSzekeres1961} is indeed saturated. As far as we know, this is widely believed (for otherwise the Erdős--Szekeres conjecture would be disproved immediately) but never been verified in the literature since its existence from 1961.

\subsection{Graph-theoretic generalizations}

The increasing or decreasing sequences and cups or caps can be generalized to monotone paths in a graph-theoretic setting. Inside any $r$-uniform complete hypergraph $H$ with a linear-ordered vertex set $V(H)$, a \textit{monotone path of length} $k$ (or a \textit{monotone $k$-path}) is a subgraph of $H$ with vertices being $v_1<v_2<\dots<v_{k+r-1}$ and (hyper)edges being $\{v_i,v_{i+1},\dots,v_{i+r-1}\}$ for $i = 1, \dots, k$. Here we define the length of a monotone path as the number of its edges rather than its vertices.

Let $\ram^{(r)}_{\sf p}(k,\ell)$ be the maximum size of an $r$-uniform vertex-ordered complete hypergraph $H$ that is \textit{$(k, \ell)$-path-free} (i.e.~each edge is colored by one of red and blue such that $H$ contains neither a red monotone $k$-path nor a blue monotone $\ell$-path). The determination of $\ram^{(r)}_{\sf p}(k,\ell)$ is a Ramsey-type problem that connects to the Erd\H{o}s--Szekeres results in the following way: Given a generic planar point set $P$, we can create a $3$-uniform complete hypergraph $H_P$ whose vertices are $P$ ordered by their $x$-coordinates. Color each triple of $H_P$ red or blue based on whether it is a cup or a cap. Then a monochromatic monotone $\ell$-path in $H_P$ corresponds to an $(\ell+2)$-cup or cap. So, $\ram_{\sf p}^{(3)}(k,\ell) \geq \ram_{\sf c}(k+2,\ell+2)$. Similarly, $\ram_{\sf p}^{(2)}(k,\ell) \geq \ram_{\sf s}(k+1,\ell+1)$. The Ramsey numbers $\ram^{(r)}_{\sf p}(k,\ell)$ are extensively studied, and their bounds are considered as abstract generalizations of \eqref{eq:ESL} and \eqref{eq:EST}. See \cite{eliavs2013higher,fox2012erdHos,MoshkovitzShapira2014} for more details. 

Another major question investigated in this paper is the saturation problem for monotone paths. We say that a $2$-colored vertex-ordered complete hypergraph $H$ is \textit{$(k,\ell)$-path-saturated} if $H$ contains neither red monotone $k$-path nor blue monotone $\ell$-path, and any $H^+$ properly containing $H$ contains either a red monotone $k$-path or a blue monotone $\ell$-path. Here, $H^+$ is again a $2$-colored vertex-ordered complete hypergraph such that the containment preserves both the ordering and the coloring. The saturation number $\sat_{\sf p}^{(r)}(k,\ell)$ is defined to be the minimum size of an $r$-uniform $2$-colored ordered complete hypergraph that is $(k,\ell)$-path-saturated. In contrast to the geometric setting of \Cref{thm:sat_c,thm:sat_g}, we show that this abstract saturation number is always equal to the Ramsey number. 
\begin{theorem}\label{thm:sat_p}
    For any integers $r \geq 2$ and $k,\ell \geq 1$, we have $\sat^{(r)}_{\sf p}(k,\ell) = \ram^{(r)}_{\sf p}(k,\ell)$.
\end{theorem}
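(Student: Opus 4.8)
The plan is to establish the two inequalities separately. The bound $\sat^{(r)}_{\sf p}(k,\ell) \le \ram^{(r)}_{\sf p}(k,\ell)$ is immediate: an extremal $(k,\ell)$-path-free coloring on $N := \ram^{(r)}_{\sf p}(k,\ell)$ vertices is automatically saturated, because adding even one more vertex forces the vertex count above the Ramsey threshold, and any complete coloring on that enlarged vertex set contains a monochromatic monotone path of the forbidden length; in particular, every $H^+$ properly containing $H$ (which must have strictly more vertices, since $H$ is already complete on its vertex set) contains such a path. So the entire content is the reverse inequality $\sat^{(r)}_{\sf p}(k,\ell) \ge \ram^{(r)}_{\sf p}(k,\ell)$: no $(k,\ell)$-path-saturated coloring can have fewer than $\ram^{(r)}_{\sf p}(k,\ell)$ vertices.

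To prove this, I would argue by contradiction: suppose $H$ is $(k,\ell)$-path-saturated on a vertex set of size $N < \ram^{(r)}_{\sf p}(k,\ell)$. Saturation means that $H$ cannot be properly extended while remaining $(k,\ell)$-path-free; the goal is to produce a strictly larger $(k,\ell)$-path-free coloring $H^+$ and derive a contradiction. The natural move is to insert a single new vertex $v$ at some position in the linear order and extend the coloring to all new edges through $v$. Since $H$ itself is $(k,\ell)$-path-free and $N+1 \le \ram^{(r)}_{\sf p}(k,\ell)$, there does exist some $(k,\ell)$-path-free coloring on $N+1$ ordered vertices; the subtle point is that such a coloring need not restrict to exactly $H$ on the original $N$ vertices. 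To fix this, I would exploit the structure of the \emph{extremal} colorings. Following the proof of the Erdős--Szekeres lemma and its hypergraph analogues (as referenced, and as the paper's promised new proof of \Cref{thm:sat_s} will use), the key is a monotonicity/composition property: one can place $v$ at the very end (or very beginning) of the order and color the new edges using a ``level function'' that records, for each $(r-1)$-tuple ending a potential path, the longest red and blue monotone paths terminating there. Concretely, for the last vertex one assigns colors to the edges $\{v_{i}, \dots, v_{i+r-2}, v\}$ so as to not create a red $k$-path or blue $\ell$-path; the obstruction to doing this is exactly that \emph{every} such $(r-1)$-tuple already has a red $(k-1)$-path and a blue $(\ell-1)$-path ending at it — but a product/pigeonhole argument (the higher-uniformity version of the grid coloring behind \eqref{eq:ESL} and \eqref{eq:EST}) shows that this forces $N \ge \ram^{(r)}_{\sf p}(k,\ell)$ already, contradicting $N < \ram^{(r)}_{\sf p}(k,\ell)$.

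The main obstacle, and where the argument needs care, is the inductive bookkeeping for $r \ge 3$: for graphs ($r = 2$) the level function is a pair of integers attached to each vertex and the extension at the end is straightforward, but for hypergraphs the relevant statistic lives on $(r-1)$-tuples, and one must check that assigning the colors of all $\binom{N}{r-1}$ new edges through $v$ can be done consistently (the color of $\{v_{i_1},\dots,v_{i_{r-1}},v\}$ interacts with paths built on overlapping tuples). I expect the clean way around this is an induction on $k + \ell$: remove the last vertex $w$ of $H$, observe that deleting $w$ cannot destroy saturation in a way that lowers the count below the Ramsey number for the ``one-shorter'' problem on the appropriate side, and splice in the extension for the smaller parameters. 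The base cases $k = 1$ or $\ell = 1$ are trivial since $\ram^{(r)}_{\sf p}(1, \ell) = r - 1$ and the only path-free coloring is the single edge (or the $r-1$ isolated vertices), which is vacuously saturated. Assembling these pieces yields $\sat^{(r)}_{\sf p}(k,\ell) = \ram^{(r)}_{\sf p}(k,\ell)$, the equality of \Cref{thm:sat_p}.
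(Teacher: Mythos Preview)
Your plan to insert the new vertex at the very end of the order has a genuine gap. The obstruction to extending at the end is not that \emph{every} $(r-1)$-tuple carries both a red $(k-1)$-path and a blue $(\ell-1)$-path, but that \emph{some} tuple does: a single ``full'' tuple blocks the unique new edge through it, and that is enough to kill the end-insertion. This weaker condition does not force $N \ge \ram^{(r)}_{\sf p}(k,\ell)$. Already for $r=2$, $k=\ell=2$ one has $\ram^{(2)}_{\sf p}(2,2)=4$, yet on three vertices with $\{1,2\},\{1,3\}$ red and $\{2,3\}$ blue the vertex $3$ has both a red $1$-path and a blue $1$-path ending at it, so no color can be assigned to the edge $\{3,4\}$ without creating a monochromatic $2$-path. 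Thus end-insertion fails while $N=3<4$. Your fallback induction on $k+\ell$ (delete the last vertex and appeal to smaller parameters) does not rescue this: deleting a vertex from a saturated coloring need not produce a saturated coloring for any smaller pair, so the inductive hypothesis has nothing to bite on. The base-case computation is also off: $\ram^{(r)}_{\sf p}(1,\ell)=\ell+r-2$, not $r-1$.

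What the paper actually does is insert the new vertex at a carefully chosen \emph{interior} position. Using the Moshkovitz--Shapira labeling $L$ (which for $r\ge 3$ assigns to each $k$-tuple an element of an iterated down-set poset $\mathcal{P}_{r+1-k}(n)$, not just a pair of integers), one finds a vertex label $M\in\mathcal{P}_r(n)$ that is missing because $N<|\mathcal{P}_r(n)|=\ram^{(r)}_{\sf p}(k,\ell)$, and inserts $v^+$ immediately after the last vertex $w$ with $L(w)\subseteq M$. The real work is then to define \emph{potential labels} $\widetilde{L}$ on all tuples through $v^+$, recursively choosing $\widetilde{L}(v_1,\dots,v_k)\in \widetilde{L}(v_2,\dots,v_k)\setminus\widetilde{L}(v_1,\dots,v_{k-1})$, and to color each new edge by the coordinate in which its terminal $(r-1)$-tuple label strictly exceeds its initial one. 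A minimal-counterexample argument then shows the actual post-insertion labels are dominated by the potential ones, so no long monochromatic path is created. The two ideas you are missing are (i) the interior position dictated by a missing label, and (ii) the recursive potential-label machinery needed to color all new edges consistently when $r\ge 3$; neither the end-insertion nor the $k+\ell$ induction supplies these.
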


\smallskip

The rest of this paper is organized as follows: In \Cref{sec:warmup}, we present a new proof of \Cref{thm:sat_s} which serves as an illustration of our proof of \Cref{thm:sat_p}. In \Cref{sec:hypergraph}, we use a labeling technique of Moshkovitz and Shapira \cite{MoshkovitzShapira2014} to prove \Cref{thm:sat_p} completely. \Cref{sec:cupcap} is devoted to the saturation problem for cups-versus-caps. In particular, we prove \Cref{thm:sat_c} there. \Cref{sec:gon} is devoted to the proof of \Cref{thm:sat_g}. Finally, we include remarks and open problems in \Cref{sec:remarks}.

\section{Warm-up: monotone paths in ordered graphs}\label{sec:warmup}

The following proof of the Erdős--Szekeres Lemma is due to Seidenberg \cite{Seidenberg1959}. For a $(k,\ell)$-seq.-free  sequence $v_1,\dots,v_n$ assign a vertex label $L(v_i) = (k_i,\ell_i)$, where 
\begin{itemize}
    \item $k_i$ is the length of the longest increasing subsequence ending at $v_i$, and 
    \item $\ell_i$ is the length of the longest decreasing subsequence ending at $v_i$. 
\end{itemize}
For $i<j$, either the decreasing or the increasing sequence ending at $v_i$ can be extended to $v_j$. So, $L(v_i)\ne L(v_j)$ for any $i\ne j$. Therefore, there are at most $(k-1)(\ell-1)$ elements in the sequence. 

The main idea behind the proof of \Cref{thm:sat_p} is to analyze a similar vertex labeling procedure, which was introduced in the general case by Moshkovitz and Shapira \cite{MoshkovitzShapira2014}. If we have less than $\ram_{\sf p}^{(r)}(k,\ell)$ points, then after creating the vertex labeling, one of the possible labels is missing. We will extend the hypergraph and its coloring so that the new vertex receives one of the missing labels while the labels of other vertices do not change. As a warm-up, we prove $\sat_{\sf p}^{(2)}(k,\ell) = \ram_{\sf p}^{(2)}(k,\ell)$. 

\begin{proof}[Proof of \Cref{thm:sat_p} assuming $r = 2$]
      Consider a red-blue colored complete graph $H$ on the vertex set $V(H) = [N]\eqdef \{1,2,\dots,N\}$ under the usual ordering ``$<$'' so that there is no red path of length $k$ or blue path of length $\ell$. Assume that $N<\ram_{\sf p}^{(2)}(k, \ell)$. We need to show that we can add a new vertex and extend the coloring to the new edges.
    
    For each vertex $v$, denote by $L(v)$ the pair $(1 + \ell_{\sf r}, 1 + \ell_{\sf b})$, where 
    \begin{itemize}
        \item $\ell_{\sf r}$ is the length of the longest red monotone path ending at $v$, and
        \item $\ell_{\sf b}$ is the length of the longest blue monotone path ending at $v$. 
    \end{itemize}
    Recall that the length of a path is the number of its edges, that is, for $r=2$ the length is one less than the number of vertices in the path. Call $L(v)$ the \emph{vertex label} of $v$, and define a partial order ``$\preceq$'' on pairs such that $(x_1,x_2) \preceq (y_1,y_2)$ if and only if $x_1 \le y_1$ and $x_2 \le y_2$. Then, a coloring is $(k, \ell)$-path-free if and only if $L(v) \preceq (k, \ell)$ for all $v\in V(H)$. 
    
    If $v_1 < v_2$ and $v_1v_2$ is red, then $v_1v_2$ extends any red path ending at $v_1$, and if $v_1v_2$ is blue it extends the blue paths. So, $L(v_2)$ is strictly larger than $L(v_1)$ in at least one coordinate. 

    \begin{observation}\label{obs:ord}
    If $v_1 < v_2$, then $L(v_1) \not\succeq L(v_2)$. In particular, $v_1 \neq v_2$ implies $L(v_1) \neq L(v_2)$. 


    \end{observation}
 
    \subsubsection*{Picking the position}

    Since $\ram_{\sf p}^{(2)}(k,\ell) = (k-1)(\ell-1)$, there must be a pair $(\ell_{\sf r},\ell_{\sf b}) \preceq (k,\ell)$ which does not appear as a vertex label. As an example, in \Cref{fig:mon_paths_example} we have $k=2, \, \ell=3$ and the vertex label $(1,3)$ is missing. Let $(\ell_{\sf r},\ell_{\sf b})$ be a missing vertex label which is minimal with respect to $\preceq$. That is, any pair $(x,y) \preceq (\ell_{\sf r}, \ell_{\sf b})$ appears but $(\ell_{\sf r}, \ell_{\sf b})$ does not. 

    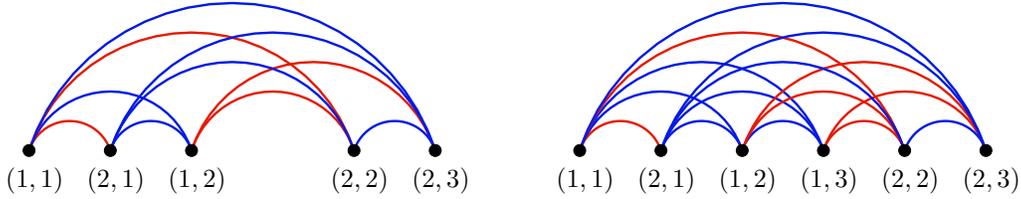
\begin{figure}[!ht]
        \centering
        \definecolor{qqqqff}{rgb}{0.,0.1,0.9}
    \definecolor{ffqqqq}{rgb}{0.9,0.1,0.}
    \scalebox{0.9}{
    \begin{tikzpicture}[line cap=round,line join=round,>=triangle 45,x=0.6cm,y=0.6cm]
    \clip(3.4348337503791915,-1.1) rectangle (15,3.932075376066969);
    \draw [shift={(5.,-0.3249196962329061)},line width=1.pt,color=ffqqqq]  plot[domain=0.31415926535897915:2.8274333882308142,variable=\t]({1.*1.0514622242382672*cos(\t r)+0.*1.0514622242382672*sin(\t r)},{0.*1.0514622242382672*cos(\t r)+1.*1.0514622242382672*sin(\t r)});
    \draw [shift={(6.,-0.6498393924658125)},line width=1.pt,color=qqqqff]  plot[domain=0.31415926535897926:2.827433388230814,variable=\t]({1.*2.1029244484765344*cos(\t r)+0.*2.1029244484765344*sin(\t r)},{0.*2.1029244484765344*cos(\t r)+1.*2.1029244484765344*sin(\t r)});
    \draw [shift={(7.,-0.32491969623290584)},line width=1.pt,color=qqqqff]  plot[domain=0.31415926535897887:2.8274333882308142,variable=\t]({1.*1.051462224238267*cos(\t r)+0.*1.051462224238267*sin(\t r)},{0.*1.051462224238267*cos(\t r)+1.*1.051462224238267*sin(\t r)});
    \draw [shift={(10.,-0.6498393924658122)},line width=1.pt,color=ffqqqq]  plot[domain=0.31415926535897915:2.8274333882308142,variable=\t]({1.*2.1029244484765344*cos(\t r)+0.*2.1029244484765344*sin(\t r)},{0.*2.1029244484765344*cos(\t r)+1.*2.1029244484765344*sin(\t r)});
    \draw [shift={(11.,-0.9747590886987186)},line width=1.pt,color=ffqqqq]  plot[domain=0.31415926535897926:2.827433388230814,variable=\t]({1.*3.1543866727148018*cos(\t r)+0.*3.1543866727148018*sin(\t r)},{0.*3.1543866727148018*cos(\t r)+1.*3.1543866727148018*sin(\t r)});
    \draw [shift={(13.,-0.32491969623290634)},line width=1.pt,color=qqqqff]  plot[domain=0.3141592653589793:2.827433388230814,variable=\t]({1.*1.0514622242382672*cos(\t r)+0.*1.0514622242382672*sin(\t r)},{0.*1.0514622242382672*cos(\t r)+1.*1.0514622242382672*sin(\t r)});
    \draw [shift={(8.,-1.2996787849316247)},line width=1.pt,color=ffqqqq]  plot[domain=0.3141592653589792:2.827433388230814,variable=\t]({1.*4.205848896953069*cos(\t r)+0.*4.205848896953069*sin(\t r)},{0.*4.205848896953069*cos(\t r)+1.*4.205848896953069*sin(\t r)});
    \draw [shift={(9.,-0.9747590886987186)},line width=1.pt,color=qqqqff]  plot[domain=0.31415926535897926:2.827433388230814,variable=\t]({1.*3.1543866727148018*cos(\t r)+0.*3.1543866727148018*sin(\t r)},{0.*3.1543866727148018*cos(\t r)+1.*3.1543866727148018*sin(\t r)});
    \draw [shift={(9.,-1.6245984811645304)},line width=1.pt,color=qqqqff]  plot[domain=0.3141592653589791:2.8274333882308142,variable=\t]({1.*5.257311121191336*cos(\t r)+0.*5.257311121191336*sin(\t r)},{0.*5.257311121191336*cos(\t r)+1.*5.257311121191336*sin(\t r)});
    \draw [shift={(10.,-1.2996787849316245)},line width=1.pt,color=qqqqff]  plot[domain=0.31415926535897915:2.8274333882308142,variable=\t]({1.*4.205848896953069*cos(\t r)+0.*4.205848896953069*sin(\t r)},{0.*4.205848896953069*cos(\t r)+1.*4.205848896953069*sin(\t r)});
    \draw (3.2,-0.2) node[anchor=north west] {$(1,1)$};
    \draw (13.2,-0.2) node[anchor=north west] {$(2,3)$};
    \draw (11.2,-0.2) node[anchor=north west] {$(2,2)$};
    \draw (7.2,-0.2) node[anchor=north west] {$(1,2)$};
    \draw (5.2,-0.2) node[anchor=north west] {$(2,1)$};
    \begin{scriptsize}
    \draw [fill=black] (4.,0.) circle (2.5pt);
    \draw [fill=black] (6.,0.) circle (2.5pt);
    \draw [fill=black] (8.,0.) circle (2.5pt);
    \draw [fill=black] (12.,0.) circle (2.5pt);
    \draw [fill=black] (14.,0.) circle (2.5pt);
    \end{scriptsize}
    \end{tikzpicture}}
    \hspace{20px}
    \scalebox{0.9}{
    \begin{tikzpicture}[line cap=round,line join=round,>=triangle 45,x=0.6cm,y=0.6cm]
    \clip(3.4348337503791915,-1.1) rectangle (15,3.932075376066969);
    \draw [shift={(5.,-0.3249196962329061)},line width=1.pt,color=ffqqqq]  plot[domain=0.31415926535897915:2.8274333882308142,variable=\t]({1.*1.0514622242382672*cos(\t r)+0.*1.0514622242382672*sin(\t r)},{0.*1.0514622242382672*cos(\t r)+1.*1.0514622242382672*sin(\t r)});
    \draw [shift={(6.,-0.6498393924658125)},line width=1.pt,color=qqqqff]  plot[domain=0.31415926535897926:2.827433388230814,variable=\t]({1.*2.1029244484765344*cos(\t r)+0.*2.1029244484765344*sin(\t r)},{0.*2.1029244484765344*cos(\t r)+1.*2.1029244484765344*sin(\t r)});
    \draw [shift={(7.,-0.32491969623290584)},line width=1.pt,color=qqqqff]  plot[domain=0.31415926535897887:2.8274333882308142,variable=\t]({1.*1.051462224238267*cos(\t r)+0.*1.051462224238267*sin(\t r)},{0.*1.051462224238267*cos(\t r)+1.*1.051462224238267*sin(\t r)});
    \draw [shift={(7.,-0.9747590886987185)},line width=1.pt,color=qqqqff]  plot[domain=0.3141592653589792:2.827433388230814,variable=\t]({1.*3.1543866727148018*cos(\t r)+0.*3.1543866727148018*sin(\t r)},{0.*3.1543866727148018*cos(\t r)+1.*3.1543866727148018*sin(\t r)});
    \draw [shift={(8.,-0.6498393924658122)},line width=1.pt,color=qqqqff]  plot[domain=0.31415926535897915:2.8274333882308142,variable=\t]({1.*2.1029244484765344*cos(\t r)+0.*2.1029244484765344*sin(\t r)},{0.*2.1029244484765344*cos(\t r)+1.*2.1029244484765344*sin(\t r)});
    \draw [shift={(9.,-0.32491969623290634)},line width=1.pt,color=qqqqff]  plot[domain=0.3141592653589793:2.827433388230814,variable=\t]({1.*1.0514622242382672*cos(\t r)+0.*1.0514622242382672*sin(\t r)},{0.*1.0514622242382672*cos(\t r)+1.*1.0514622242382672*sin(\t r)});
    \draw [shift={(10.,-0.6498393924658122)},line width=1.pt,color=ffqqqq]  plot[domain=0.31415926535897915:2.8274333882308142,variable=\t]({1.*2.1029244484765344*cos(\t r)+0.*2.1029244484765344*sin(\t r)},{0.*2.1029244484765344*cos(\t r)+1.*2.1029244484765344*sin(\t r)});
    \draw [shift={(11.,-0.32491969623290634)},line width=1.pt,color=ffqqqq]  plot[domain=0.3141592653589793:2.827433388230814,variable=\t]({1.*1.0514622242382672*cos(\t r)+0.*1.0514622242382672*sin(\t r)},{0.*1.0514622242382672*cos(\t r)+1.*1.0514622242382672*sin(\t r)});
    \draw [shift={(11.,-0.9747590886987186)},line width=1.pt,color=ffqqqq]  plot[domain=0.31415926535897926:2.827433388230814,variable=\t]({1.*3.1543866727148018*cos(\t r)+0.*3.1543866727148018*sin(\t r)},{0.*3.1543866727148018*cos(\t r)+1.*3.1543866727148018*sin(\t r)});
    \draw [shift={(12.,-0.6498393924658127)},line width=1.pt,color=ffqqqq]  plot[domain=0.3141592653589793:2.827433388230814,variable=\t]({1.*2.1029244484765344*cos(\t r)+0.*2.1029244484765344*sin(\t r)},{0.*2.1029244484765344*cos(\t r)+1.*2.1029244484765344*sin(\t r)});
    \draw [shift={(13.,-0.32491969623290634)},line width=1.pt,color=qqqqff]  plot[domain=0.3141592653589793:2.827433388230814,variable=\t]({1.*1.0514622242382672*cos(\t r)+0.*1.0514622242382672*sin(\t r)},{0.*1.0514622242382672*cos(\t r)+1.*1.0514622242382672*sin(\t r)});
    \draw [shift={(8.,-1.2996787849316247)},line width=1.pt,color=ffqqqq]  plot[domain=0.3141592653589792:2.827433388230814,variable=\t]({1.*4.205848896953069*cos(\t r)+0.*4.205848896953069*sin(\t r)},{0.*4.205848896953069*cos(\t r)+1.*4.205848896953069*sin(\t r)});
    \draw [shift={(9.,-0.9747590886987186)},line width=1.pt,color=qqqqff]  plot[domain=0.31415926535897926:2.827433388230814,variable=\t]({1.*3.1543866727148018*cos(\t r)+0.*3.1543866727148018*sin(\t r)},{0.*3.1543866727148018*cos(\t r)+1.*3.1543866727148018*sin(\t r)});
    \draw [shift={(9.,-1.6245984811645304)},line width=1.pt,color=qqqqff]  plot[domain=0.3141592653589791:2.8274333882308142,variable=\t]({1.*5.257311121191336*cos(\t r)+0.*5.257311121191336*sin(\t r)},{0.*5.257311121191336*cos(\t r)+1.*5.257311121191336*sin(\t r)});
    \draw [shift={(10.,-1.2996787849316245)},line width=1.pt,color=qqqqff]  plot[domain=0.31415926535897915:2.8274333882308142,variable=\t]({1.*4.205848896953069*cos(\t r)+0.*4.205848896953069*sin(\t r)},{0.*4.205848896953069*cos(\t r)+1.*4.205848896953069*sin(\t r)});
    \draw (3.2,-0.2) node[anchor=north west] {$(1,1)$};
    \draw (13.2,-0.2) node[anchor=north west] {$(2,3)$};
    \draw (11.2,-0.2) node[anchor=north west] {$(2,2)$};
    \draw (9.2,-0.2) node[anchor=north west] {$(1,3)$};
    \draw (7.2,-0.2) node[anchor=north west] {$(1,2)$};
    \draw (5.2,-0.2) node[anchor=north west] {$(2,1)$};
    \begin{scriptsize}
    \draw [fill=black] (4.,0.) circle (2.5pt);
    \draw [fill=black] (6.,0.) circle (2.5pt);
    \draw [fill=black] (8.,0.) circle (2.5pt);
    \draw [fill=black] (10.,0.) circle (2.5pt);
    \draw [fill=black] (12.,0.) circle (2.5pt);
    \draw [fill=black] (14.,0.) circle (2.5pt);
    \end{scriptsize}
    \end{tikzpicture}}
        \caption{Extending $H$ and its coloring over a new vertex with the label (1,3).}
        \label{fig:mon_paths_example}
    \end{figure}

    Our goal is to add a new vertex $v^+$ into $V(H)$ (specify its position with respect to $\preceq$ among the vertex labels of the original $N$ vertices), and extend the coloring to edges emanated from $v^+$ such that after recomputing the vertex labels we get $L(v^+) = (\ell_{\sf r},\ell_{\sf b})$ and the vertex label of each original vertex is unchanged. Let $w$ be the last vertex such that $L(w) \prec (\ell_{\sf r}, \ell_{\sf b})$. Here $w$ exists because $L(1) = (1, 1)$. Introduce the new vertex $v^+$ right after $w$. 

    \subsubsection*{Coloring the edges}
 
    Consider the edge $vv^+$ for some $v \le w$. From \Cref{obs:ord} and the selection of $w$ we deduce that $(\ell_{\sf r},\ell_{\sf b}) \preceq L(v)$ cannot happen. Thus, $(\ell_{\sf r},\ell_{\sf b})$ is strictly larger than $L(v)$ in at least one coordinate.  Color the edge $vv^+$ by the indices where $(\ell_{\sf r},\ell_{\sf b})$ is strictly larger than $L(v)$. Note that the edge might receive more than one color. This is not a problem, and it will come in handy later in the transitive case as we can pick the color of such edges arbitrarily. Consider the edge $v^+v$ for some $ v>w$. We cannot have $L(v) \preceq (\ell_{\sf r},\ell_{\sf b})$ by the definition of $w$. Hence, we can color the edge $v^+v$ by the indices where $L(v)$ is strictly larger than $(\ell_{\sf r},\ell_{\sf b})$. 

    \subsubsection*{Finishing the proof}
    
    Let $L^+$ be the vertex labeling under this new coloring. We have to show that $L^+(v) = L(v)$ holds for every $v \in V(H)$ and $L^+(v^+) = (\ell_{\sf r}, \ell_{\sf b})$. 
    \begin{itemize}
        \item Clearly, $L^+(v) = L(v)$ for $v \le w$, as these labels depend only on former vertices and edges. 
        
        \item Let $(x, y) \eqdef L^+(v^+)$. Suppose $v^+$ is the end point of a red path $v_{i_1}, \dots, v_{i_j}, v^+$. Since the edge $v_{i_j}v^+$ is red, our coloring implies that the first coordinate of $L(v_{i_j})$ is strictly less than $\ell_r$, and so $x \le \ell_r$. Similarly, $y \le\ell_b$. Since the minimality of $(\ell_{\sf r},\ell_{\sf b})$ implies that $(\ell_{\sf r}-1, \ell_{\sf b}), \, (\ell_{\sf r}, \ell_{\sf b}-1)$ both appear as vertex labels before $v^+$, we conclude that $L^+(v^+) = (x, y) = (\ell_{\sf r}, \ell_{\sf b})$. 
        
        \item For $v > w$, our coloring implies that $L(v) \preceq L^+(v)$. Suppose to the contrary that there is a smallest $v > w$ with $L^+(v) \ne L(v)$. Let $(x^+, y^+) \eqdef L^+(v)$ and $(x, y) \eqdef L(v)$. Assume $x^+ > x$ and a similar argument works in the $y^+ > y$ case. Then the coloring rule tells us that there is a red path of length $x^+ - 1$ going through $v^+$. Indeed, this red path has to end at $v^+v$, 
        \vspace{-0.5em}
        \begin{itemize}
            \item for otherwise it would appear to be $\cdots v^+ v' \cdots v$ for some $w < v^+ < v' < v$, resulting in $L^+(v') \ne L(v')$ due to maximal length, which contradicts the minimal assumption on $v$. 
        \end{itemize}
        \vspace{-0.5em}
        Since $v^+v$ is colored red, we have $\ell_{\sf r}<x$ by the coloring rule. On the other hand, we already know that the first coordinate of $L^+(v^+)$ is $\ell_{\sf r}$, so any red path ending with $v^+v$ has length at most $\ell_{\sf r} - 1$, and hence $x^+\le\ell_{\sf r} + 1$. Thus, we obtain $\ell_{\sf r} < x < x^+ \le \ell_{\sf r} + 1$, a contradiction. 
    \end{itemize}
    Therefore the only change among the vertex labels is the appearance of $(\ell_{\sf r},\ell_{\sf b})$. Hence, the resulting graph with a new vertex $v^+$ is $(k, \ell)$-path-free as well.
\end{proof}

\Cref{thm:sat_s} was previously proved by an analysis of the corresponding Young tableaus, which is conceptually involved (see \cite[Section 4]{DKMTWZ2021}). Extending the previous proof of the $r = 2$ case of \Cref{thm:sat_p}, we present an alternative proof using the labeling technique. 

\begin{proof}[New proof of \Cref{thm:sat_s}] 
    A $2$-coloring of an $r$-uniform complete ordered hypergraph is referred to as \textit{transitive} if for any $v_1 < v_2 < v_3$, the edges $\{v_1, v_2\}$ and $\{v_2, v_3\}$ being both red (resp. blue) implies $\{v_1, v_3\}$ being red (resp. blue). Consider a $(k, \ell)$-seq.-free sequence $v_1, \dots, v_n$ of real numbers and assume that $n < (k-1)(\ell-1)$. It induces a 2-colored ordered graph $H$ where each pair of this sequence is colored by red if it is increasing and blue if it is decreasing. Notice that $H$ is $(k-1, \ell-1)$-path-free with a transitive coloring. We will show that $H$ can be extended to a larger $(k-1, \ell-1)$-path-free ordered graph $H^+$ which is transitive as well, and it is not hard to see that this gives us an extension of the original sequence to a larger $(k, \ell)$-seq.-free sequence. (This follows, e.g., from the folklore one-to-one correspondence between permutations and transitively $2$-colored graphs: a permutation $\pi$ of $[n]$ corresponds to a transitive $2$-coloring $\chi$ where $\chi(i, j) = \textsf{red}$ if $\pi(i) < \pi(j)$ and $\chi(i, j) = \textsf{blue}$ if $\pi(i) > \pi(j)$ for all $i < j$; a transitive $2$-coloring $\chi$ gives a partial order on $[n]$ where $i \prec j$ if $\chi(i, j) = \textsf{red}$ and $j \prec i$ if $\chi(i, j) = \textsf{blue}$ for all $i < j$, and this partial order is a total order hence corresponds to a permutation $\pi$.) 
    
    We follow the steps of the proof before. Let $(\ell_{\sf r}, \ell_{\sf b}) \preceq (k, \ell)$ be a minimal missing label. We pick the position of the new vertex $v^+$ the same way. For the coloring of the new edges, we do the same procedure, except when both coordinates are strictly larger we leave the edge uncolored for the moment. That is, we color only those edges whose color is forced by the intended vertex labeling. 

    As we have seen, at this point the vertex labeling of the new coloring is already locked.  That is, we do not have long monotone paths, and no matter how we finish coloring the uncolored edges no long monotone path appears. We are left with two tasks: 
    \begin{itemize}
        \item To show that transitivity has not been violated so far. 
        \item To finish the coloring without violating transitivity. 
    \end{itemize}

    We say that a triple of vertices $v_1<v_2<v_3$ in a (partial) coloring is \emph{non-transitive} if $v_1v_2, v_2v_3$ is already colored with the same color and $v_1v_3$ is colored with the other color. Suppose, at this moment, that there is a non-transitive triple. One of them must be $v^+$, and so there are three cases. 
    \begin{itemize}
        \item If $v_1<v_2<v^+$ is non-transitive, then we can assume $v_1v^+$ is red and $v_1v_2, v_2v^+$ are blue. Since $v_1v_2$ and $v_2v^+$ are blue, the second coordinate of the vertex label increases as we go from $v_1$ to $v_2$ and then to $v^+$. Since $v_1v^+$ is red rather than uncolored, the label $L(v^+)$ cannot have a larger second coordinate than $L(v_1)$, a contradiction. 
        \item If $v^+<v_1<v_2$ is non-transitive, then a similar argument works.

        \item If $v_1<v^+<v_2$ is non-transitive, then we can assume $v_1v_2$ is red and $v_1v^+, v^+v_2$ are blue. Since $v_1v^+, v^+v_2$ are blue rather than uncolored, the first coordinates of their vertex labels decrease as we go from $v_1$ to $v^+$ and then to $v_2$. This implies that $v_1v_2$ is blue, a contradiction. 
    \end{itemize}
    Therefore there are no non-transitive triples so far.
    
    For the rest of the edges, we go through them in an arbitrary order and color them respecting transitivity. We argue that we can always pick one of the colors without creating a non-transitive triple. Suppose we are coloring an edge $AC$ where $A$ is before $C$ and we cannot color it blue. This happens exactly when one of the three cases in \Cref{fig:forcedred} appears in the coloring. 

    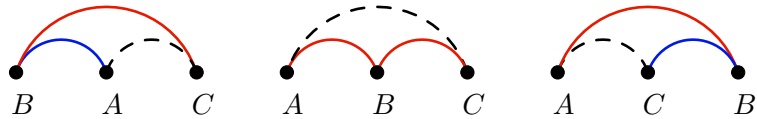
\begin{figure}[!ht]
        \centering
        \definecolor{ffqqqq}{rgb}{0.9,0.1,0.}
    \definecolor{qqqqff}{rgb}{0.,0.1,0.9}
    \scalebox{1.0}{
    \begin{tikzpicture}[line cap=round,line join=round,>=triangle 45,x=0.4cm,y=0.4cm]
    \clip(2.7667462179577775,-0.45) rectangle (29.145135513458836,4.049350671965902);
    \draw [shift={(5.5,0.5126204556506408)},line width=1.pt,color=qqqqff]  plot[domain=0.31415926535897915:2.8274333882308142,variable=\t]({1.*1.5771933363574009*cos(\t r)+0.*1.5771933363574009*sin(\t r)},{0.*1.5771933363574009*cos(\t r)+1.*1.5771933363574009*sin(\t r)});
    \draw [shift={(7.,0.025240911301281733)},line width=1.pt,color=ffqqqq]  plot[domain=0.31415926535897915:2.8274333882308142,variable=\t]({1.*3.1543866727148018*cos(\t r)+0.*3.1543866727148018*sin(\t r)},{0.*3.1543866727148018*cos(\t r)+1.*3.1543866727148018*sin(\t r)});
    \draw [shift={(8.5,0.5126204556506408)},line width=1.pt,dash pattern=on 5pt off 5pt]  plot[domain=0.31415926535897915:2.8274333882308142,variable=\t]({1.*1.5771933363574009*cos(\t r)+0.*1.5771933363574009*sin(\t r)},{0.*1.5771933363574009*cos(\t r)+1.*1.5771933363574009*sin(\t r)});
    \draw [shift={(14.5,0.5126204556506401)},line width=1.pt,color=ffqqqq]  plot[domain=0.3141592653589796:2.827433388230814,variable=\t]({1.*1.5771933363574009*cos(\t r)+0.*1.5771933363574009*sin(\t r)},{0.*1.5771933363574009*cos(\t r)+1.*1.5771933363574009*sin(\t r)});
    \draw [shift={(16.,0.025240911301280904)},line width=1.pt,dash pattern=on 5pt off 5pt]  plot[domain=0.3141592653589793:2.827433388230814,variable=\t]({1.*3.1543866727148018*cos(\t r)+0.*3.1543866727148018*sin(\t r)},{0.*3.1543866727148018*cos(\t r)+1.*3.1543866727148018*sin(\t r)});
    \draw [shift={(17.5,0.5126204556506408)},line width=1.pt,color=ffqqqq]  plot[domain=0.31415926535897915:2.8274333882308142,variable=\t]({1.*1.5771933363574009*cos(\t r)+0.*1.5771933363574009*sin(\t r)},{0.*1.5771933363574009*cos(\t r)+1.*1.5771933363574009*sin(\t r)});
    \draw [shift={(23.5,0.5126204556506417)},line width=1.pt,dash pattern=on 5pt off 5pt]  plot[domain=0.31415926535897853:2.8274333882308147,variable=\t]({1.*1.5771933363574004*cos(\t r)+0.*1.5771933363574004*sin(\t r)},{0.*1.5771933363574004*cos(\t r)+1.*1.5771933363574004*sin(\t r)});
    \draw [shift={(25.,0.025240911301281733)},line width=1.pt,color=ffqqqq]  plot[domain=0.31415926535897915:2.8274333882308142,variable=\t]({1.*3.1543866727148018*cos(\t r)+0.*3.1543866727148018*sin(\t r)},{0.*3.1543866727148018*cos(\t r)+1.*3.1543866727148018*sin(\t r)});
    \draw [shift={(26.5,0.5126204556506384)},line width=1.pt,color=qqqqff]  plot[domain=0.3141592653589806:2.8274333882308125,variable=\t]({1.*1.5771933363574013*cos(\t r)+0.*1.5771933363574013*sin(\t r)},{0.*1.5771933363574013*cos(\t r)+1.*1.5771933363574013*sin(\t r)});
    \draw (21.5,0.6) node[anchor=north west] {$A$};
    \draw (24.5,0.6) node[anchor=north west] {$C$};
    \draw (27.5,0.6) node[anchor=north west] {$B$};
    \draw (12.5,0.6) node[anchor=north west] {$A$};
    \draw (18.5,0.6) node[anchor=north west] {$C$};
    \draw (15.5,0.6) node[anchor=north west] {$B$};
    \draw (6.5,0.6) node[anchor=north west] {$A$};
    \draw (9.5,0.6) node[anchor=north west] {$C$};
    \draw (3.5,0.6) node[anchor=north west] {$B$};
    \begin{scriptsize}
    \draw [fill=black] (4.,1.) circle (2.5pt);
    \draw [fill=black] (7.,1.) circle (2.5pt);
    \draw [fill=black] (10.,1.) circle (2.5pt);
    \draw [fill=black] (13.,1.) circle (2.5pt);
    \draw [fill=black] (16.,1.) circle (2.5pt);
    \draw [fill=black] (19.,1.) circle (2.5pt);
    \draw [fill=black] (22.,1.) circle (2.5pt);
    \draw [fill=black] (25.,1.) circle (2.5pt);
    \draw [fill=black] (28.,1.) circle (2.5pt);
    \end{scriptsize}
    \end{tikzpicture}}
    \caption{Configurations that force a red edge by transitivity.}
        \label{fig:forcedred}
    \end{figure}

    Using \Cref{fig:forcedred} we can list all configurations that force an edge to be both red and blue. Up to exchanging the colors, these are listed in \Cref{fig:trans_pairs}. In each case, we indicate by dashed lines all the edges that cannot be colored without creating a non-transitive triple. 

    Each time an edge cannot be colored without breaking transitivity, there is another edge that also cannot be colored without breaking transitivity. Furthermore, the two problematic edges are always independent.  Since the already colored part of our graph is transitive this implies that if we run into any of these cases, then we have two uncolored independent edges. This is a contradiction as all uncolored edges share the vertex $v^+$.

    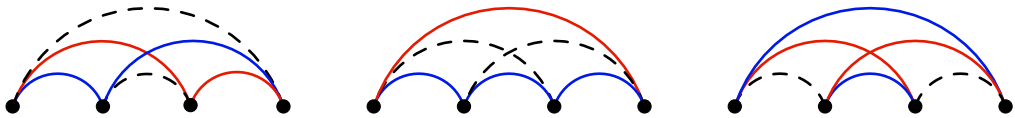
\begin{figure}[!ht]
        \centering
        \definecolor{ffqqqq}{rgb}{0.9,0.1,0.}
        \definecolor{qqqqff}{rgb}{0.,0.1,0.9}
        \scalebox{1.0}{
        \begin{tikzpicture}[line cap=round,line join=round,>=triangle 45,x=0.4cm,y=0.4cm]
            \clip(3.513890452540779,0.75) rectangle (37.6784482213951,5.246721710583301);
            \draw [shift={(5.5,0.5126204556506408)},line width=1.pt,color=qqqqff]  plot[domain=0.31415926535897915:2.8274333882308142,variable=\t]({1.*1.5771933363574009*cos(\t r)+0.*1.5771933363574009*sin(\t r)},{0.*1.5771933363574009*cos(\t r)+1.*1.5771933363574009*sin(\t r)});
            \draw [shift={(6.96319649200027,0.06151013871842712)},line width=1.pt,color=ffqqqq]  plot[domain=0.3215979510575357:2.8348720739293705,variable=\t]({1.*3.108262644940258*cos(\t r)+0.*3.108262644940258*sin(\t r)},{0.*3.108262644940258*cos(\t r)+1.*3.108262644940258*sin(\t r)});
            \draw [shift={(8.46319649200027,0.5488896830677858)},line width=1.pt,dash pattern=on 5pt off 5pt]  plot[domain=0.32926029089551695:2.8425344137673516,variable=\t]({1.*1.5311578926565925*cos(\t r)+0.*1.5311578926565925*sin(\t r)},{0.*1.5311578926565925*cos(\t r)+1.*1.5311578926565925*sin(\t r)});
            \draw [shift={(8.5,-0.4621386330480777)},line width=1.pt,dash pattern=on 5pt off 5pt]  plot[domain=0.3141592653589792:2.827433388230814,variable=\t]({1.*4.731580009072203*cos(\t r)+0.*4.731580009072203*sin(\t r)},{0.*4.731580009072203*cos(\t r)+1.*4.731580009072203*sin(\t r)});
            \draw [shift={(10.,0.02524091130128132)},line width=1.pt,color=qqqqff]  plot[domain=0.31415926535897926:2.827433388230814,variable=\t]({1.*3.1543866727148018*cos(\t r)+0.*3.1543866727148018*sin(\t r)},{0.*3.1543866727148018*cos(\t r)+1.*3.1543866727148018*sin(\t r)});
            \draw [shift={(11.44890681958776,0.5203303177690921)},line width=1.pt,color=ffqqqq]  plot[domain=0.2999178186168347:2.81319194148867,variable=\t]({1.*1.6235680023863666*cos(\t r)+0.*1.6235680023863666*sin(\t r)},{0.*1.6235680023863666*cos(\t r)+1.*1.6235680023863666*sin(\t r)});
            \draw [shift={(17.5,0.5126204556506408)},line width=1.pt,color=qqqqff]  plot[domain=0.31415926535897915:2.8274333882308142,variable=\t]({1.*1.5771933363574009*cos(\t r)+0.*1.5771933363574009*sin(\t r)},{0.*1.5771933363574009*cos(\t r)+1.*1.5771933363574009*sin(\t r)});
            \draw [shift={(19.,0.025240911301281733)},line width=1.pt,dash pattern=on 5pt off 5pt]  plot[domain=0.31415926535897915:2.8274333882308142,variable=\t]({1.*3.1543866727148018*cos(\t r)+0.*3.1543866727148018*sin(\t r)},{0.*3.1543866727148018*cos(\t r)+1.*3.1543866727148018*sin(\t r)});
            \draw [shift={(20.5,0.5126204556506401)},line width=1.pt,color=qqqqff]  plot[domain=0.3141592653589796:2.827433388230814,variable=\t]({1.*1.5771933363574009*cos(\t r)+0.*1.5771933363574009*sin(\t r)},{0.*1.5771933363574009*cos(\t r)+1.*1.5771933363574009*sin(\t r)});
            \draw [shift={(20.5,-0.4621386330480784)},line width=1.pt,,color=ffqqqq]  plot[domain=0.3141592653589793:2.827433388230814,variable=\t]({1.*4.731580009072203*cos(\t r)+0.*4.731580009072203*sin(\t r)},{0.*4.731580009072203*cos(\t r)+1.*4.731580009072203*sin(\t r)});
            \draw [shift={(22.,0.025240911301281733)},line width=1.pt,dash pattern=on 5pt off 5pt]  plot[domain=0.31415926535897915:2.8274333882308142,variable=\t]({1.*3.1543866727148018*cos(\t r)+0.*3.1543866727148018*sin(\t r)},{0.*3.1543866727148018*cos(\t r)+1.*3.1543866727148018*sin(\t r)});
            \draw [shift={(23.5,0.5126204556506417)},line width=1.pt,color=qqqqff]  plot[domain=0.31415926535897853:2.8274333882308147,variable=\t]({1.*1.5771933363574004*cos(\t r)+0.*1.5771933363574004*sin(\t r)},{0.*1.5771933363574004*cos(\t r)+1.*1.5771933363574004*sin(\t r)});
            \draw [shift={(29.5,0.5126204556506401)},line width=1.pt,dash pattern=on 5pt off 5pt]  plot[domain=0.3141592653589796:2.827433388230814,variable=\t]({1.*1.5771933363574009*cos(\t r)+0.*1.5771933363574009*sin(\t r)},{0.*1.5771933363574009*cos(\t r)+1.*1.5771933363574009*sin(\t r)});
            \draw [shift={(31.,0.025240911301281733)},line width=1.pt,color=ffqqqq]  plot[domain=0.3141592653589795:2.8274333882308147,variable=\t]({1.*3.154386672714798*cos(\t r)+0.*3.154386672714798*sin(\t r)},{0.*3.154386672714798*cos(\t r)+1.*3.154386672714798*sin(\t r)});
            \draw [shift={(32.5,0.5126204556506401)},line width=1.pt,color=qqqqff]  plot[domain=0.3141592653589796:2.827433388230814,variable=\t]({1.*1.5771933363574009*cos(\t r)+0.*1.5771933363574009*sin(\t r)},{0.*1.5771933363574009*cos(\t r)+1.*1.5771933363574009*sin(\t r)});
            \draw [shift={(32.5,-0.4621386330480784)},line width=1.pt,color=qqqqff]  plot[domain=0.3141592653589793:2.827433388230814,variable=\t]({1.*4.731580009072203*cos(\t r)+0.*4.731580009072203*sin(\t r)},{0.*4.731580009072203*cos(\t r)+1.*4.731580009072203*sin(\t r)});
            \draw [shift={(34.,0.025240911301281733)},line width=1.pt,color=ffqqqq]  plot[domain=0.31415926535897915:2.8274333882308142,variable=\t]({1.*3.1543866727148018*cos(\t r)+0.*3.1543866727148018*sin(\t r)},{0.*3.1543866727148018*cos(\t r)+1.*3.1543866727148018*sin(\t r)});
            \draw [shift={(35.5,0.5126204556506417)},line width=1.pt,dash pattern=on 5pt off 5pt]  plot[domain=0.31415926535897853:2.8274333882308147,variable=\t]({1.*1.5771933363574004*cos(\t r)+0.*1.5771933363574004*sin(\t r)},{0.*1.5771933363574004*cos(\t r)+1.*1.5771933363574004*sin(\t r)});
            \begin{scriptsize}
            \draw [fill=black] (4.,1.) circle (2.5pt);
            \draw [fill=black] (7.,1.) circle (2.5pt);
            \draw [fill=black] (9.912103311588034,1.0439790895355963) circle (2.5pt);
            \draw [fill=black] (13.,1.) circle (2.5pt);
            \draw [fill=black] (16.,1.) circle (2.5pt);
            \draw [fill=black] (19.,1.) circle (2.5pt);
            \draw [fill=black] (22.,1.) circle (2.5pt);
            \draw [fill=black] (25.,1.) circle (2.5pt);
            \draw [fill=black] (28.,1.) circle (2.5pt);
            \draw [fill=black] (31.,1.) circle (2.5pt);
            \draw [fill=black] (34.,1.) circle (2.5pt);
            \draw [fill=black] (37.,1.) circle (2.5pt);
            \end{scriptsize}
        \end{tikzpicture}
        }
        
        \caption{Transitivity problems come in pairs.}
        \label{fig:trans_pairs}
    \end{figure}

    Both tasks are done, and so the proof of \Cref{thm:sat_s} is complete. 
\end{proof}

\section{Monotone paths in ordered hypergraphs} \label{sec:hypergraph}

In this section, we prove \Cref{thm:sat_p}. Our proof is based on an enumerative result of Moshkovitz and Shapira \cite{MoshkovitzShapira2014}. For ease of notation, we only establish the result for monotone paths of the same lengths in $2$-colored hypergraphs (i.e.~$k = \ell = n$), our proof is easily generalizable to the cases when the desired monotone paths have different lengths for different (possibly more than two) colors. 

Set $\mathcal{P}_2(n) \eqdef [n]^2$. With an abuse of notations, for any $x = (x_1, x_2), \, y=(y_1, y_2) \in \mathcal{P}_2(n)$, write $x \subseteq y$ if $x_1 \leq y_1$ and $x_2 \leq y_2$. Inductively, we define $\mathcal{P}_k(n)$ for $k = 3, 4, \dots$ as follows: 
\begin{itemize}
    \item A subset $\mathcal{F}\subseteq \mathcal{P}_{k-1}(n)$ is in $\mathcal{P}_{k}(n)$ if $S\in \mathcal{F}$ implies $S'\in \mathcal{F}$ for any $S'\subseteq S$. 
\end{itemize}
In other words, $\mathcal{P}_{k}(n)$ contains those subsets of $\mathcal{P}_{k-1}(n)$ that are closed under taking subsets. If we consider the $\mathcal{P}_{k-1}(n)$-s as a poset with respect to $\subseteq$, then $\mathcal{P}_k(n)$ is the family of down-sets of $\mathcal{P}_{k-1}(n)$. We refer to this defining condition of $\mathcal{P}_{k}(n)$ as the \emph{hereditary property}.

Given any red-blue colored $r$-uniform complete ordered hypergraph $H$ on the vertex set $[N]$ containing no monochromatic monotone path of length $n$, we assign labels to each $k$-tuple with $1 \leq k \leq r-1$ in $V(H)$ as follows: 
\begin{itemize}
    \item For every $(r-1)$-tuple of vertices $v_{1}<\dots<v_{r-1}$, set $L(v_{1},\dots,v_{r-1}) \eqdef (1+\ell_{\sf r},1+\ell_{\sf b})$, where $\ell_{\sf r}$ ($\ell_{\sf b}$) is the length of the longest red (blue) monotone path ending at $v_{1},\dots,v_{r-1}$. 
    \item For $k = r-2, r-3, \dots, 1$ and every $k$-tuple of vertices $v_{1}<\dots<v_{k}$, recursively define
    \begin{equation*}
        L(v_{1},\dots,v_{k}) \eqdef \{S \in \mathcal{P}_{r-k}(n) : S \subseteq L(v_{0},v_{1},\dots,v_{k})~\text{for some}~v_{0}<v_{1}\}. 
    \end{equation*}
\end{itemize}
Obviously, the labels of $k$-tuples are elements in $\mathcal{P}_{r+1-k}(n)$ for $1 \leq k \leq r-1$. The following result, albeit not specifically stated, was proved by Moshkovitz and Shapira (Lemma~3.2 in \cite{MoshkovitzShapira2014}). Its proof essentially relies on the hereditary property of $\mathcal{P}$.

\begin{lemma} \label{ms_injective}
    For each $1\leq k\leq r-1$ and every $v_1<v_2<\dots<v_{k+1}$ in $V(H)$, we have \begin{equation*}
         L(v_1,\dots,v_{k})\not\supseteq L(v_2,\dots,v_{k+1}). 
    \end{equation*}
    In particular, $u \neq v$ implies $L(u) \neq L(v)$, and so $\ram^{(r)}_{\sf p}(n) \leq |\mathcal{P}_{r}(n)|$. 
\end{lemma}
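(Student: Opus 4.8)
The plan is to prove the inequality $L(v_1,\dots,v_k)\not\supseteq L(v_2,\dots,v_{k+1})$ by downward induction on $k$, starting from the top level $k=r-1$ and descending to $k=1$; the ``in particular'' clause will then follow by specializing to $k=1$. Throughout, the governing fact is that the label of a $j$-tuple lives in $\mathcal{P}_{r+1-j}(n)$, and that ``$\subseteq$'' means coordinatewise comparison at the bottom level $\mathcal{P}_2(n)=[n]^2$ but set containment of down-sets at every higher level.

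For the base case $k=r-1$, the labels $L(v_1,\dots,v_{r-1})$ and $L(v_2,\dots,v_r)$ are pairs in $\mathcal{P}_2(n)$. The vertices $v_1,\dots,v_r$ span a single hyperedge of $H$, colored either red or blue. If it is red, then appending this edge to a longest red monotone path ending at $v_1,\dots,v_{r-1}$ produces a strictly longer red monotone path ending at $v_2,\dots,v_r$; hence the first coordinate of $L(v_2,\dots,v_r)$ strictly exceeds that of $L(v_1,\dots,v_{r-1})$, so $L(v_1,\dots,v_{r-1})\not\supseteq L(v_2,\dots,v_r)$. The blue case is symmetric in the second coordinate.

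For the inductive step, assume the statement for $(k+1)$-tuples and suppose, for contradiction, that $L(v_1,\dots,v_k)\supseteq L(v_2,\dots,v_{k+1})$ for some $v_1<\dots<v_{k+1}$. The key observation is that $L(v_1,v_2,\dots,v_{k+1})$, viewed as an element of $\mathcal{P}_{r-k}(n)$, trivially satisfies $L(v_1,v_2,\dots,v_{k+1})\subseteq L(v_1,v_2,\dots,v_{k+1})$; since $v_1<v_2$, the recursive definition of $L(v_2,\dots,v_{k+1})$ (taking the auxiliary first vertex to be $v_1$) then gives $L(v_1,v_2,\dots,v_{k+1})\in L(v_2,\dots,v_{k+1})$. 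By the assumed containment, $L(v_1,v_2,\dots,v_{k+1})\in L(v_1,\dots,v_k)$, and unwinding the recursive definition of $L(v_1,\dots,v_k)$ yields a vertex $v_0<v_1$ with $L(v_1,v_2,\dots,v_{k+1})\subseteq L(v_0,v_1,\dots,v_k)$. But $v_0<v_1<\dots<v_{k+1}$ is a chain of $k+2$ vertices, and applying the inductive hypothesis to it gives $L(v_0,v_1,\dots,v_k)\not\supseteq L(v_1,v_2,\dots,v_{k+1})$, contradicting the containment just derived.

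Finally, specializing to $k=1$: if $u\neq v$, say $u<v$, then $L(u)\not\supseteq L(v)$, so in particular $L(u)\neq L(v)$. Thus $v\mapsto L(v)$ is an injection from $V(H)=[N]$ into $\mathcal{P}_r(n)$, whence $N\le|\mathcal{P}_r(n)|$; since $H$ was an arbitrary red-blue coloring with no monochromatic monotone $n$-path, this gives $\ram^{(r)}_{\sf p}(n)\le|\mathcal{P}_r(n)|$. I expect the only real obstacle to be the bookkeeping in the inductive step — keeping straight which level each label inhabits, and invoking the recursive definition with the correct auxiliary vertex. Here the hereditary property of $\mathcal{P}$ does the quiet but essential work: it is precisely what guarantees that each $L(v_1,\dots,v_k)$ is a genuine down-set, hence a legitimate element of $\mathcal{P}_{r+1-k}(n)$, so that all the ``$\in$'' and ``$\subseteq$'' relations in the argument are well-defined.
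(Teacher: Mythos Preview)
Your proof is correct. The paper does not actually supply its own proof of this lemma; it attributes the result to Moshkovitz and Shapira (Lemma~3.2 in \cite{MoshkovitzShapira2014}) and merely remarks that ``its proof essentially relies on the hereditary property of $\mathcal{P}$.'' Your downward induction on $k$, with the base case handled by extending a longest monochromatic path across the edge $v_1\cdots v_r$ and the inductive step handled by pulling $L(v_1,\dots,v_{k+1})\in L(v_2,\dots,v_{k+1})$ through the assumed containment to find a predecessor $v_0<v_1$, is exactly the standard argument and presumably what Moshkovitz--Shapira do. The final paragraph's remark about the hereditary property is consistent with the paper's own one-line summary.
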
 

We use abbreviated notations $\ram_{\sf p}^{(r)}(n) \eqdef \ram_{\sf p}^{(r)}(n, n)$ and $\sat_{\sf p}^{(r)}(n) \eqdef \sat_{\sf p}^{(r)}(n, n)$. Moshkovitz and Shapira also provided constructions (Lemma~3.4 in \cite{MoshkovitzShapira2014}) to prove that
\begin{equation}\label{ms_bijective}
    \ram^{(r)}_{\sf p}(n) = |\mathcal{P}_{r}(n)|.
\end{equation}

Now we are ready to state the following result. Together with \eqref{ms_bijective} it implies \Cref{thm:sat_p}. 

\begin{theorem}\label{hypergraph}
    For any integers $r \geq 2$ and $n \geq 1$, we have $\sat^{(r)}_{\sf p}(n) = |\mathcal{P}_{r}(n)|$. 
\end{theorem}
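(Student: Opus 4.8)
The upper bound requires no work: a vertex-maximum $(n,n)$-path-free hypergraph has $\ram^{(r)}_{\sf p}(n)$ vertices, and any proper extension of it (necessarily on more vertices, since it is already complete) must contain a monochromatic monotone $n$-path; hence it is $(n,n)$-path-saturated, so $\sat^{(r)}_{\sf p}(n)\le \ram^{(r)}_{\sf p}(n) = |\mathcal{P}_r(n)|$ by \eqref{ms_bijective}. For the reverse inequality I would argue by contradiction: suppose $H$ is $(n,n)$-path-saturated with $|V(H)|<|\mathcal{P}_r(n)|$, set up the Moshkovitz--Shapira labeling $L$, and show that $H$ can be properly extended, contradicting saturation. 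The whole argument is the $r=2$ warm-up with the recursive layers of $\mathcal{P}$ threaded through an induction.

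By \Cref{ms_injective} the single-vertex labels $L(v)$ are distinct elements of $\mathcal{P}_r(n)$, so some $\Lambda\in\mathcal{P}_r(n)$ is not attained; fix $\Lambda$ minimal with respect to $\subseteq$. There is a last vertex $w$ with $L(w)\subsetneq\Lambda$ (it exists because the minimum element of $\mathcal{P}_r(n)$ is always attained, namely by the least vertex of $H$, and $\Lambda$ is not the minimum). Insert the new vertex $v^+$ immediately after $w$. The target is to $2$-color all new hyperedges (the $r$-subsets of $V(H)\cup\{v^+\}$ that contain $v^+$) so that, after recomputing all tuple labels, $L^+(v^+)=\Lambda$, $L^+$ agrees with $L$ on every old tuple, and all new tuple labels are valid elements of the appropriate $\mathcal{P}$; any such extension is then again $(n,n)$-path-free and properly contains $H$, the desired contradiction.

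To build the coloring I would first unfold $\Lambda$ along prefixes: to each increasing sequence $u_1<\dots<u_j\le w$ of old vertices, meant to sit just left of $v^+$, assign a target $\Lambda_{u_1,\dots,u_j}\in\mathcal{P}_{r-j}(n)$ by the recursion mirroring the definition of $\mathcal{P}$ --- namely $\Lambda$ itself for the empty sequence, and at each further step the set of elements that $L^+(u_1,\dots,u_j,v^+)$ and its left-extensions are required to realize. Minimality of $\Lambda$ together with the ``$\not\supseteq$'' inequalities of \Cref{ms_injective} applied within the initial segment of $H$ up to $w$ guarantees that these targets are consistent and jointly realizable. Then I color each new hyperedge by the coordinate(s) on which the relevant target strictly dominates the relevant old label: a new hyperedge whose largest vertex is $v^+$ is colored so as to push $L^+(v^+)$ up to $\Lambda$ (this specializes to the rule ``$vv^+$ for $v\le w$'' when $r=2$), and a new hyperedge in which $v^+$ is not the largest vertex is colored so as to prevent the $(r-1)$-tuple to its right from acquiring a larger label (specializing to ``$v^+v$ for $v>w$''). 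As in the warm-up a hyperedge may be eligible for both colors, and either choice works.

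Correctness is checked by downward induction on the tuple size $k=r-1,r-2,\dots,1$: for each $k$, (i) the label of every old $k$-tuple is unchanged, and (ii) every $k$-tuple containing $v^+$ receives exactly the target label assigned above --- so at $k=1$ this gives $L^+(v^+)=\Lambda$ and preserves all old single-vertex labels. The base case $k=r-1$ is the monotone-path bookkeeping done for $r=2$ in the ``Finishing the proof'' step, and the inductive step repackages it through the hereditary property of $\mathcal{P}$. I expect the genuinely delicate point --- the main obstacle --- to be clause (i) in the base case: showing that inserting $v^+$, which now realizes $\Lambda$, does not create a \emph{longer} monochromatic monotone path threading through $v^+$ that would increase the label of some old $(r-1)$-tuple to its right. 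This is handled exactly as in the third bullet of the $r=2$ argument: such a path would have to use a new hyperedge with $v^+$ in next-to-last position as its last edge, the coloring rule then forces a strict inequality in the forbidden direction, and the minimality of $\Lambda$ --- which puts every proper part of $\Lambda$ already to the left of $v^+$ --- produces a contradiction. With (i) and (ii) in hand, $H\cup\{v^+\}$ is a strictly larger $(n,n)$-path-free $2$-colored ordered complete hypergraph, contradicting the saturation of $H$; hence $\sat^{(r)}_{\sf p}(n)\ge|\mathcal{P}_r(n)|$, and the theorem follows.
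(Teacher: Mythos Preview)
Your overall strategy is the paper's strategy: locate a missing vertex label, insert $v^+$ right after the last vertex whose label sits inside it, manufacture a coloring of the new edges so that the recomputed labels stay within $\mathcal{P}$. But there is a real gap in your plan at the ``unfold $\Lambda$ along prefixes'' step. You assign targets $\Lambda_{u_1,\dots,u_j}$ only to tuples of the form $(u_1,\dots,u_j,v^+)$ with $u_j\le w$, i.e.\ tuples in which $v^+$ is the \emph{last} element. Now consider a new hyperedge $e=\{v_1<\dots<v_r\}$ with $v^+=v_i$ and $1<i<r$. To decide its color you must compare the two $(r-1)$-tuples $(v_1,\dots,v_{r-1})$ and $(v_2,\dots,v_r)$, and at least one of them (both, if $1<i<r-1$) contains $v^+$ in a non-terminal position --- so neither is an old tuple with a known $L$-value nor a ``prefix of $v^+$'' with an assigned $\Lambda$-target. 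Your rule ``color so as to prevent the $(r-1)$-tuple to its right from acquiring a larger label'' is then undefined, because you have no target for that tuple to compare against. The $r=2$ warm-up hides this: there $v^+$ is always an endpoint of every new edge.

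The paper closes exactly this gap by defining potential labels $\widetilde{L}(v_1,\dots,v_k)$ for \emph{every} $k$-tuple containing $v^+$ (any position), via an upward recursion on $k$: for each new tuple one simply picks an arbitrary element of $\widetilde{L}(v_2,\dots,v_k)\setminus\widetilde{L}(v_1,\dots,v_{k-1})$, which is nonempty by the inductively maintained inequality $\widetilde{L}(v_1,\dots,v_{k-1})\not\supseteq\widetilde{L}(v_2,\dots,v_k)$. This is the key technical device you are missing. Two further simplifications the paper makes relative to your sketch: it does not need the missing label to be $\subseteq$-minimal, and it does not attempt to prove that the recomputed labels $L^+$ equal the targets exactly --- only the one-sided containment $L^+(v_1,\dots,v_{r-1})\subseteq\widetilde{L}(v_1,\dots,v_{r-1})$ at the top level $k=r-1$, which already forces $L^+\in[n]^2$ and hence $(n,n)$-path-freeness. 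Your proposed downward induction aiming for exact equality at every level is stronger than necessary and harder to carry out.
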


We follow the steps from the proof in \Cref{sec:warmup}. If we have less than $|\mathcal{P}_{r}(n)|$ vertices, then after creating the vertex labeling, one of the possible labels is missing. We will extend the hypergraph and its coloring so that the new vertex receives one of the missing labels while the labels of other vertices do not change. 

\begin{proof}[Proof of \Cref{hypergraph}]
    Let $H$ be a red-blue colored $r$-uniform complete ordered hypergraph $H$ on the vertex set $V(H)=[N]$ containing no monochromatic monotone path of length $n$. Suppose $N < |\mathcal{P}_{r}(n)|$. Our goal is to show that $H$ is not saturated. Let $L$ be the labeling for $H$ as described earlier in this section. Since $N < |\mathcal{P}_{r}(n)|$, there exists a missing label $M \in \mathcal{P}_{r}(n)$ with $M \neq L(v)$ for each vertex $v \in V(H)$.
    
    \subsubsection*{Picking the position}
    Let $w$ be the largest vertex in $V(H)$ such that $L(w) \subseteq M$. Such a $w$ exists because $L(1)$ takes the minimum value of $\mathcal{P}_{r}(n)$ with respect to ``$\subseteq$''. We construct a new $r$-uniform complete ordered hypergraph $H^+$ by adding a new vertex $v^+$ into $V(H)$ right after $w$ and keeping the colors of the edges originally in $H$. If suffices to show that we can color the additional edges of $H^+$ without creating monochromatic monotone paths of length $n$.

    \subsubsection*{Coloring the edges}
    We begin with assigning potential labels for the hypergraph $H^+$. For each $1 \leq k \leq r-1$ and $k$-tuple $v_1<\dots<v_k$ in $V(H^+)$, assign a potential label $\widetilde{L}(v_1,\dots,v_k) \in \mathcal{P}_{r+1-k}(n)$ with
    \begin{itemize}
        \item[(i)] $\widetilde{L}(v_1,\dots,v_k) \not\supseteq \widetilde{L}(v_2,\dots,v_{k+1})$ for all $v_1<\dots<v_{k+1}$ in $V(H^+)$, and
        \item[(ii)] $\widetilde{L}(v_1,\dots,v_k) = L(v_1,\dots,v_k)$ for all $v_1<\dots<v_k$ from the original $V(H)$. 
    \end{itemize}
    Define $\widetilde{L}(v^+) \eqdef M$, the missing label, and $\widetilde{L}(v) \eqdef L(v)$ for all $v \in V(H)$. By \Cref{ms_injective} and our construction of $H^+$, the conditions (i) and (ii) are satisfied for $k=1$. 
    
    Inductively, suppose the potential labels for all $(k-1)$-tuples have been assigned, and we are in the position to define $\widetilde{L}(v_1,\dots,v_k)$ for every $v_1<\dots<v_k$ in $V(H^+)$. Due to (ii), we have to set $\widetilde{L}(v_1,\dots,v_k) \eqdef L(v_1,\dots,v_k)$ if this $k$-tuple comes from $V(H)$. For other $k$-tuples containing $v^+$, we define $\widetilde{L}(v_1,\dots,v_k)$ as an arbitrary fixed element from $\widetilde{L}(v_2,\dots,v_k) \setminus \widetilde{L}(v_1,\dots,v_{k-1})$. Since condition (i) is satisfied for $k-1$, such an element always exists. In fact, for any $v_1 < \dots < v_k$ in $V(H)$, we also have  
    \[
    \widetilde{L}(v_1, \dots, v_k) \in \widetilde{L}(v_2, \dots, v_k) \setminus \widetilde{L}(v_1, \dots, v_{k-1}).
    \]
    Indeed, due to $\widetilde{L} = L$ on $V(H)$, the only thing we need to check is $L(v_1, \dots, v_k) \not\in L(v_1, \dots, v_{k-1})$. And this follows from the definition of $L$ and Lemma~\ref{ms_injective}.
    
    We need to check that condition (i) is satisfied for $k$. Suppose for the sake of contradiction that $\widetilde{L}(v_2,\dots,v_{k+1})\subseteq \widetilde{L}(v_1,\dots,v_k)$ for $v_1<\dots<v_{k+1}$. Then by hereditary property, 
    \begin{align*}
    \widetilde{L}(v_1, \dots, v_k) \in \widetilde{L}(v_2, \dots, v_k) \implies \widetilde{L}(v_2, \dots, v_{k+1}) \in \widetilde{L}(v_2, \dots, v_k), 
    \end{align*}
    which contradicts the fact that $\widetilde{L}(v_2, \dots, v_{k+1}) \in \widetilde{L}(v_3, \dots, v_{k+1}) \setminus \widetilde{L}(v_2, \dots, v_k)$. So, (i) holds. We conclude that the potential labels $\widetilde{L}$ can be recursively assigned.
    
    Now, we can color the new edges using $\widetilde{L}$, the potential labels. For any edge $v_1 \dotsb v_r \in E(H^+)$ with $v^+ \in \{v_1,\dots,v_r\}$, the condition (i) implies that $\widetilde{L}(v_1,\dots,v_{r-1}) \not\supseteq \widetilde{L}(v_2,\dots,v_{r})$ as elements in $\mathcal{P}_{2}(n) = [n]^2$. So, at least one coordinate of $\widetilde{L}(v_2,\dots,v_r)$ is larger than that of $\widetilde{L}(v_1,\dots,v_{r-1})$. Color $v_1 \dotsb v_r$ red if the first coordinate is larger, and blue if the second coordinate is larger. For edges that are both red and blue, we arbitrarily assign a color. 

    \subsubsection*{Finishing the proof}
    
    We show that $H^+$ contains no monochromatic monotone path of length $n$. For every $(r-1)$-tuple $v_1<\dots<v_{r-1}$ in $V(H^+)$, set $L^+(v_1,\dots,v_{r-1}) \eqdef (1+\ell_{\sf r}, 1+\ell_{\sf b})$ where $\ell_{\sf r}$ (resp.~$\ell_{\sf b}$) is the length of the longest red (resp.~blue) monotone path in $H^+$ ending at $v_1,\dots,v_{r-1}$. We shall prove that
    \begin{equation}\label{hypergraph_eq1}
        L^+(v_1,\dots,v_{r-1}) \subseteq \widetilde{L}(v_1,\dots,v_{r-1}) ~\text{for all}~ v_1<\dots<v_{r-1} ~\text{in}~ V(H^+).
    \end{equation}
    Since $\widetilde{L}(v_1,\dots,v_{r-1})$ takes its value in $\mathcal{P}_{2}(n) = [n]^2$, \Cref{hypergraph} follows from \eqref{hypergraph_eq1}. 

    For a contradiction, suppose \eqref{hypergraph_eq1} is violated by some $(r-1)$-tuple in $V(H^+)$. Let $v_1 < \dots < v_{r-1}$ be the smallest such tuple under the lexicographic order. According to the definition of $L^+$, this violation is witnessed by a monochromatic (say red) monotone path $P$ ending at $v_1, \dots, v_{r-1}$. Let $e \eqdef v_0v_1 \dotsb v_{r-1}$ be the last edge and $\ell$ be the length of this red path. Then $1 + \ell$ is larger than the first coordinate of $\widetilde{L}(v_1, \dots, v_{r-1})$. The minimum assumption on $v_1, \dots, v_{r-1}$ implies that 
    \begin{equation}\label{hypergraph_eq2}
        L^+(v_0,\dots,v_{r-2}) \subseteq \widetilde{L}(v_0,\dots,v_{r-2}).
    \end{equation}
    We then separate our indirect proof into two cases: 
    \begin{itemize}
        \item If $v^+ \in e$, then $\widetilde{L}(v_1,\dots,v_{r-1})$ has a larger first coordinate than $\widetilde{L}(v_0,\dots,v_{r-2})$ since $e$ is red, and so $\ell$ is larger than the first coordinate of $\widetilde{L}(v_0,\dots,v_{r-2})$. On the other hand, notice that $P \setminus \{e\}$ is a red monotone path of length $\ell-1$ ending at $v_0,v_1,\dots,v_{r-2}$. This means the first coordinate of $L^+(v_0,\dots,v_{r-2})$ is at least $\ell$, a contradiction to \eqref{hypergraph_eq2}.
        
        \item If $v^+ \notin e$, then the vertices $v_0,v_1\dots,v_{r-1}$ are all in $V(H)$. From condition (ii) of the potential labeling and \eqref{hypergraph_eq2} we obtain $L(v_0,\dots,v_{r-2}) = \widetilde{L}(v_0,\dots,v_{r-2}) \supseteq L^+(v_0,\dots,v_{r-2})$. Again, the path $P\setminus\{e\}$ implies that the first coordinate of $L^+(v_0,\dots,v_{r-2})$ is at least $\ell$, and so there is a red monotone path in $H$ ending at $v_0,\dots,v_{r-2}$ of length at least $\ell-1$. Together with $e$, we have a red monotone path in $H$ ending at $v_1,\dots,v_{r-1}$ of length at least $\ell$. It follows that the first coordinate of $L(v_1,\dots,v_{r-1})$ is at least $1+\ell$, which is larger than the first coordinate of $\widetilde{L}(v_1,\dots,v_{r-1})$. This contradicts condition (ii) of the potential labeling. \qedhere
    \end{itemize}
\end{proof}

\section{Saturation for cups-versus-caps} \label{sec:cupcap}

In this section, we study the saturation problem for cups and caps. Recall that the definitions directly imply $\sat_{\sf c}(k, \ell) \le \ram_{\sf c}(k, \ell)$ for any integers $k, \ell$. By reflecting over any horizontal line, it is easily seen that $\sat_{\sf c}(k, \ell) = \sat_{\sf c}(\ell, k)$. 

\smallskip

The study begins with a basic property of $(k, \ell)$-cup-cap-saturated sets. 

\begin{proposition} \label{prop:cc_satexist}
    Let $k, \ell \ge 2$ be integers. If $P$ is a $(k, \ell)$-cup-cap-saturated set, then 
    \begin{itemize}
        \item there exist $k-1$ points of $P$ that form a $(k-1)$-cup, and
        \item there exist $\ell-1$ points of $P$ that form an $(\ell-1)$-cap. 
    \end{itemize}
\end{proposition}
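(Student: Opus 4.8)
The plan is to prove each bullet by contraposition. For the first, assume $P$ is $(k,\ell)$-cup-cap-free but contains no $(k-1)$-cup; I will exhibit a generic point $p$ whose addition to $P$ creates neither a $k$-cup nor an $\ell$-cap, so that $P$ is not $(k,\ell)$-cup-cap-saturated. The second bullet is the mirror image (place $p$ far below rather than far above), so I only spell out the first. Throughout I assume $k,\ell\ge 3$, which covers the range $k,\ell\ge 4$ needed later; the cases with $\min\{k,\ell\}=2$ are degenerate.

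Here is the point I would add. Fix a real number $x_0$ strictly smaller than the $x$-coordinate of every point of $P$, and set $p=(x_0,M)$. For $M$ large enough two things hold at once. First, $p$ avoids each of the finitely many lines through pairs of points of $P$, so $P\cup\{p\}$ is again a generic point set with $|P\cup\{p\}|=|P|+1$. Second, for each $q=(x_q,y_q)\in P$ the slope $\slp(pq)=(y_q-M)/(x_q-x_0)$ has a fixed positive denominator and a numerator tending to $-\infty$, hence $\slp(pq)\to-\infty$ as $M\to\infty$; so for $M$ large enough $\slp(pq)$ is, for every $q\in P$ simultaneously, strictly smaller than every one of the finitely many slopes spanned by pairs of points of $P$. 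Fix such an $M$.

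Now inspect the cups and caps of $P\cup\{p\}$. A cup or cap that avoids $p$ lies entirely in $P$, hence has fewer than $k$ vertices if it is a cup and fewer than $\ell$ vertices if it is a cap. Since $p$ is leftmost, any cup or cap using $p$ has the form $p,q_1,\dots,q_j$ with $q_1<\dots<q_j$ in $P$; if $j\le 1$ there is nothing to check, so let $j\ge 2$. By the choice of $M$, $\slp(pq_1)<\slp(q_1q_2)$, so the triple $p,q_1,q_2$ is a cup and is not a cap. Hence no tuple $p,q_1,\dots,q_j$ with $j\ge 2$ is a cap (already the first slope comparison fails), so every cap through $p$ has at most two vertices and no $\ell$-cap appears. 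On the other hand $p,q_1,\dots,q_j$ is a cup if and only if $q_1,\dots,q_j$ is a cup of $P$, since $\slp(pq_1)<\slp(q_1q_2)$ holds automatically and the remaining comparisons involve only points of $P$; as $P$ has no $(k-1)$-cup, such a cup has at most $k-1$ vertices, so no $k$-cup appears either. Therefore $P\cup\{p\}$ is $(k,\ell)$-cup-cap-free and strictly larger than $P$, contradicting saturation; this proves the first bullet. For the second, run the same argument with $p=(x_0,-M)$: now $\slp(pq)\to+\infty$, every slope from $p$ exceeds all slopes spanned by $P$, so cups through $p$ have at most two vertices while caps through $p$ are exactly the caps of $P$ with $p$ prepended, of at most $\ell-1$ vertices because $P$ has no $(\ell-1)$-cap.

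The only delicate point is the slope bookkeeping in the middle paragraph: one must verify that a single choice of $M$ makes $p$ generic and at the same time pushes every slope $\slp(pq)$ strictly past the finite set of slopes determined by $P$. This is immediate from $\slp(pq)\to-\infty$ (resp.\ $+\infty$) as $M\to\infty$, so there is no real obstacle. Conceptually, the argument just says that a point placed far above at the far left behaves like a point at vertical infinity on the far left, which may be prepended to any cap but can never lengthen a cup — and symmetrically for a point placed far below.
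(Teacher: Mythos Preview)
Your proof is correct and follows essentially the same approach as the paper: exhibit a point far away so that every triple containing the new point is a $3$-cup (for the first bullet), then observe that no cap of length $\ge 3$ can pass through it and any cup through it restricts to a cup in $P$ with one fewer vertex. The paper's version is terser --- it simply asserts the existence of a far-away $q$ with $p_1,p_2,q$ a $3$-cup for all $p_1,p_2\in P$ --- whereas you explicitly place $p$ to the far left with very large $y$-coordinate and track the slopes; the content is identical.

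One slip worth fixing: your closing conceptual sentence says a point far above at the far left ``may be prepended to any cap but can never lengthen a cup.'' This is the reverse of what your (correct) argument two paragraphs earlier shows --- such a point can be prepended to any \emph{cup} and can never sit in a cap of length $\ge 3$. It is only a typo in the informal summary and does not affect the proof.
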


\begin{proof}
    Due to the symmetry, it suffices to prove the first statement. Assume, for the sake of contradiction, that $P \subset \mathbb{R}^2$ is a $(k, \ell)$-cup-cap-saturated set without any $(k-1)$-cup subset. Choose an arbitrary point $q$ such that
    \begin{itemize}
        \item the $x$-coordinate of $q$ is bigger than every point from $P$, and 
        \item the $y$-coordinate of $q$ is big enough so that $q$ is above every line spanned by $P$. 
    \end{itemize}
    Then $P \cup \{q\}$ is generic and $p_1, p_2, q$ form a $3$-cup for any choice of $p_1, p_2 \in P$. This implies that $P \cup \{q\}$ is $(k, \ell)$-cup-cap-free, which contradicts the saturation property of $P$. 
\end{proof}

We work out some values of $\sat_{\sf c}(k, \ell)$ where at least one of $k$ and $\ell$ is small. 

\begin{proposition}\label{thm:sat_csmall}
    Let $\ell$ be a positive integer. \begin{itemize}
        \item $\sat_{\sf c}(1, \ell) = 0 = \ram_{\sf c}(1, \ell)$ for any $\ell \ge 1$. 
        \item $\sat_{\sf c}(2, \ell) = 1 = \ram_{\sf c}(2, \ell)$ for any $\ell \ge 2$.
        \item $\sat_{\sf c}(3, \ell) = \ell-1 = \ram_{\sf c}(3, \ell)$ for any $\ell \ge 3$.
        \item $\sat_{\sf c}(4, 4) = 6 = \ram_{\sf c}(4, 4)$.
    \end{itemize}
\end{proposition}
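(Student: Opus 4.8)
The plan is to treat the four cases separately, using one common observation throughout: a \emph{maximum}-size $(k,\ell)$-cup-cap-free point set is automatically $(k,\ell)$-cup-cap-saturated, since adding any generic point would push the size past the maximum and hence create a $k$-cup or an $\ell$-cap. This gives $\sat_{\sf c}(k,\ell)\le\ram_{\sf c}(k,\ell)$, so after substituting the Erd\H{o}s--Szekeres values from \eqref{eq:EST} --- $\ram_{\sf c}(1,\ell)=0$, $\ram_{\sf c}(2,\ell)=1$, $\ram_{\sf c}(3,\ell)=\ell-1$, $\ram_{\sf c}(4,4)=6$ --- it remains only to prove the matching lower bounds on $\sat_{\sf c}$.

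Three of these are immediate. There is nothing to prove for $(1,\ell)$. For $(2,\ell)$ with $\ell\ge 2$, any two points form a $2$-cup, so the only $(2,\ell)$-cup-cap-free sets are $\varnothing$ and singletons, and $\varnothing$ is not saturated: adding one point yields a single point, which contains no $2$-cup and, since $\ell\ge 2$, no $\ell$-cap. For $(3,\ell)$ with $\ell\ge 3$, \Cref{prop:cc_satexist} already says that every $(3,\ell)$-cup-cap-saturated set contains an $(\ell-1)$-cap, hence has at least $\ell-1$ points.

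The substantive case is $\sat_{\sf c}(4,4)\ge 6$, where I would show that no $(4,4)$-cup-cap-free set $P$ with $|P|\le 5$ is saturated. First I would prove a purely combinatorial sub-lemma: any generic point set $P$ with $|P|\le 5$ can be separated by a vertical line into two cups, or into two caps. For $|P|\le 4$ this is trivial --- cut $P$ into two parts of size at most two, each a cup. For $P=\{p_1,\dots,p_5\}$ listed in increasing $x$-order: if $\{p_1,p_2,p_3\}$ is a cup cut just after $p_3$; if $\{p_3,p_4,p_5\}$ is a cup cut just before $p_3$; otherwise both are caps, and cutting just after $p_3$ separates $P$ into the caps $\{p_1,p_2,p_3\}$ and $\{p_4,p_5\}$. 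Next, given such a split, say $P=P_L\sqcup P_R$ along $x=\xi$ with $\xi$ avoiding the $x$-coordinates of $P$ and with $P_L,P_R$ both cups (the two-caps case is dual, using a high point), I would add $q=(\xi,-M)$ for a large $M$ and check that $P\cup\{q\}$ is generic and remains $(4,4)$-cup-cap-free, contradicting saturation. Subsets avoiding $q$ lie in $P$, so cause no trouble. For a $4$-element subset $S\ni q$, as $M\to\infty$ the slope from the last point of $S\cap P_L$ to $q$ tends to $-\infty$ and the slope from $q$ to the first point of $S\cap P_R$ tends to $+\infty$, while every slope between two points of $P$ is unaffected; a short case check on the position of $q$ inside $S$ then shows $S$ is neither a cup nor a cap --- when $q$ is interior to $S$ the slope sequence reverses direction, and when $q$ is first or last in $S$ the remaining three points would have to form a $3$-cap inside $P_R$ or inside $P_L$, impossible because a cup has no $3$-cap subset.

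I expect essentially all the difficulty to be concentrated in this last case: correctly choosing the vertical split and then verifying that sending $q$ far downward (or upward) destroys every potential $4$-cup and $4$-cap through $q$. The remaining verifications --- the three easy lower bounds and the binomial-coefficient conventions producing the four Ramsey values --- are routine.
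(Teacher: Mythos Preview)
Your proof is correct. Parts 1--3 match the paper's treatment exactly. For part 4, both you and the paper use the same core mechanism --- insert a point at extreme height on a suitable vertical line --- but the organization differs. You first isolate a structural sub-lemma: any at most five generic points can be split by some vertical line into two cups or into two caps, and this split determines both the line and whether to send $q$ far down or far up. The paper instead fixes the vertical line once and for all at $x=\frac{x_2+x_3}{2}$ (so the left part $\{p_1,p_2\}$ has two points and is automatically both a cup and a cap), tries $q$ high, and observes that the unique obstruction is $\{p_3,p_4,p_5\}$ being a $3$-cup --- in which case $q$ low works instead. Your splitting lemma is a cleaner abstraction and makes the case analysis symmetric; the paper's fixed position exploits the same phenomenon implicitly, since with a two-point left part only the cup/cap status of the right part matters.
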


\begin{proof}
    The facts $\sat_{\sf c}(1, \ell) = 0$ and $\sat_{\sf c}(2, \ell) = 1$ are strightforward corollaries of the definitions. From \Cref{prop:cc_satexist} we deduce that $\ram_{\sf c}(3, \ell) = \ell-1 \le \sat_{\sf c}(3, \ell)$, and so $\sat_{\sf c}(3, \ell) = \ell-1$. 

    Obviously, $\sat_{\sf c}(4, 4) \ge 3$. Let $P = \{(x_1, y_1), \dots, (x_m, y_m)\}$ be a generic $(4, 4)$-cup-cap-free set with $x_1 < \dots < x_m$ and $3 \le m \le 5$. Consider $q \eqdef (\frac{x_2+x_3}{2}, y)$ such that $P \cup \{q\}$ is generic. If $q$ is very high up (i.e., above any line formed by two points of $P$) and $P \cup \{q\}$ is not $(4, 4)$-cup-cap-free, then the only case to make this happen is that $(x_3, y_3), (x_4, y_4), (x_5, y_5)$ form a $3$-cup. Indeed, $q$ has to be part of some $4$-cup or $4$-cap in $P \cup \{q\}$, yet it cannot make a $4$-cap since the $x$-coordinate of $q$ is between $x_2$ and $x_3$. It follows that $P \cup \{q\}$ is $(4, 4)$-cup-cap-free if $y$ is chosen to make $q$ very low below (i.e., below any line formed by two points of $P$). We conclude that $\sat_{\sf c}(4, 4) \ge 6 = \ram_{\sf c}(4, 4)$, and so $\sat_{\sf c}(4, 4) = 6$. 
\end{proof}

The following result gives the first example with $\sat_{\sf c}(k, \ell) < \ram_{\sf c}(k, \ell)$, which is crucial for our proof of the upper bound in \Cref{thm:sat_c}. 

\begin{theorem}\label{thm:sat45}
    We have $\sat_{\sf c}(4, 5) = 8 < 10 = \ram_{\sf c}(4, 5)$.
\end{theorem}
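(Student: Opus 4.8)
The goal is to establish both an upper bound $\sat_{\sf c}(4,5) \le 8$ and a matching lower bound $\sat_{\sf c}(4,5) \ge 8$; the equality $\ram_{\sf c}(4,5) = \binom{5}{2} = 10$ is already given by \eqref{eq:EST}. For the upper bound I would exhibit an explicit generic planar point set $P$ on $8$ points and verify two things: (a) $P$ is $(4,5)$-cup-cap-free, i.e.\ it contains no $4$-cup and no $5$-cap; and (b) $P$ is saturated, i.e.\ every generic point $q$ added to $P$ creates either a $4$-cup or a $5$-cap. By \Cref{prop:cc_satexist}, any saturated set must contain a $3$-cup and a $4$-cap, so the construction must be built around such sub-configurations; a natural candidate is a point set of the form obtained by taking a small number of ``clusters'' placed along a concave arc (to produce caps) with each cluster a short convex arc (to produce cups), mimicking the Erd\H{o}s--Szekeres recursive construction but with the cluster sizes trimmed. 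I would draw the $8$ points concretely (or specify coordinates), then check (a) by a short case analysis on how many points a putative cup/cap can take from each cluster, and check (b) by observing that the plane is partitioned by the point set into finitely many cells (faces of the arrangement of all lines through pairs of $P$, refined by the cups/caps structure), and in each cell a new point $q$ completes a forbidden configuration --- this is the part that needs the most care.

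\textbf{The saturation check.} For (b) the clean way to argue is: suppose $q$ is generic with respect to $P$ and $P \cup \{q\}$ is still $(4,5)$-cup-cap-free. Using the labeling idea from \Cref{sec:warmup} adapted to cups/caps (assign to each point the pair $(\text{longest cup ending here}, \text{longest cap ending here})$, which lies in $\{1,2,3\} \times \{1,2,3,4\}$ since $P$ is $(4,5)$-cup-cap-free), the point $q$ would have to receive a label, and one checks that every label value is already ``blocked'' --- more precisely, for every possible position of $q$ (i.e.\ every cell of the relevant arrangement), either a $4$-cup or a $5$-cap is forced. Since $8$ points create only finitely many cells, this reduces to a finite verification, which I would organize by first fixing the $x$-coordinate of $q$ between two consecutive points of $P$ (or to the left/right of all of them) and then sweeping $q$ vertically, tracking when the longest cup ending at $q$ reaches $4$ or the longest cap ending at $q$ reaches $5$. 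The key observation making this finite and short is that the relevant data about $q$ depends only on which side of each line $p_ip_j$ it lies on, and the saturated construction is chosen exactly so that the ``safe'' region is empty.

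\textbf{The lower bound.} For $\sat_{\sf c}(4,5) \ge 8$, I would argue that no $(4,5)$-cup-cap-saturated set can have $7$ or fewer points. By \Cref{prop:cc_satexist} such a set $P$ contains a $3$-cup $C$ and a $4$-cap $D$. The strategy is to show that a small $P$ always admits a ``safe'' placement of a new point: using the method already seen in the proof of \Cref{thm:sat_csmall} (place $q$ extremely high above or extremely low below $P$, between appropriately chosen consecutive $x$-coordinates, so that $q$ only participates in very short cups/caps), one shows that if $|P| \le 7$ there is always a gap or an extreme position where adding $q$ neither completes a $4$-cup nor a $5$-cap. Concretely, a point placed very low extends every pair to a $3$-cap but creates no long cup; a point placed very high extends every pair to a $3$-cup-like configuration for caps; one then needs to rule out that $P$ already forces the forbidden configuration no matter where $q$ goes, and a counting argument on how the $\le 7$ points can simultaneously block all ``high'' and all ``low'' placements across all $x$-gaps gives the contradiction. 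I expect \emph{this lower-bound argument to be the main obstacle}: the upper bound is a concrete construction one can simply check, but proving that \emph{no} $7$-point configuration is saturated requires handling the interaction between the cup-structure and the cap-structure carefully, and will likely proceed by assuming a $7$-point saturated set exists, extracting its $3$-cup and $4$-cap via \Cref{prop:cc_satexist}, and then doing a careful case analysis (perhaps aided by the observation that the two remaining points are heavily constrained) to locate a safe insertion point.
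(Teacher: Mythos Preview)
Your upper-bound plan matches the paper: give an explicit $8$-point configuration and verify saturation by checking the finitely many cells of the line arrangement. The paper does exactly this (with concrete coordinates and a computer check, plus a sample by-hand argument for one cell).

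For the lower bound, however, you are missing the key structural lemma that makes the argument work. Using only \Cref{prop:cc_satexist} (one $3$-cup and one $4$-cap) gives you at most $7$ points with overlaps, and the vague ``counting argument on how $\le 7$ points can block all high and low placements'' is not a proof. The paper instead proves a \emph{shadow lemma} (\Cref{shadow}): any $(k,\ell)$-cup-cap-saturated set with $k,\ell\ge 4$ must contain \emph{two} $(k-1)$-cups with disjoint shadows and \emph{two} $(\ell-1)$-caps with disjoint shadows. The reason is exactly your ``very high'' idea pushed to its conclusion: a point $q$ placed very high at $x$-coordinate $x^*$ cannot sit inside any $4$-cap, and it completes a $k$-cup only if some $(k-1)$-cup of $P$ has shadow \emph{not} containing $x^*$; so if all $(k-1)$-cup shadows had a common point, a high $q$ there would show $P$ is not saturated. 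This forces two $(k-1)$-cups with disjoint shadows, and symmetrically two $(\ell-1)$-caps.

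Once you have the shadow lemma, the lower bound is immediate: two $4$-caps with disjoint shadows already account for $8$ points unless they share the boundary point $p$. In that degenerate case the seven points $p_1,\dots,p_3,p,p_6,\dots,p_8$ have the property that every $3$-cup among them has shadow containing $x(p)$, so the two $3$-cups with disjoint shadows guaranteed by the lemma cannot both live inside these seven points --- an eighth point is forced. Your proposal gestures at the right mechanism (high/low placements) but does not extract this lemma, and without it the ``case analysis on the remaining points'' you anticipate has no organizing principle.
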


Before proving \Cref{thm:sat45}, we need some preparation. For any point $p = (a, b)$ in the plane, set $x(p) \eqdef a$. When denoting by $\overline{p_1 \dotsb p_m}$ an $m$-cup or cap, we implicitly assume $x(p_1) < \dots < x(p_m)$. For any $m$-cup or cap $\overline{p_1 \dotsb p_m}$, we define its \textit{shadow} as the open interval $\bigl(x(p_1), x(p_m)\bigr)$ in $\mathbb{R}$. 

\begin{lemma} \label{shadow}
    Let $P \subset \mathbb{R}^2$ be a $(k, \ell)$-cup-cap-saturated set with $k \ge 4$ and $\ell \ge 4$. Then $P$ contains two $(k-1)$-cups with disjoint shadows and two $(\ell-1)$-caps with disjoint shadows. 
\end{lemma}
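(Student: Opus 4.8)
The plan is to prove the statement about $(k-1)$-cups directly; the statement about $(\ell-1)$-caps then follows by applying the same argument to the reflection of $P$ over a horizontal line, which exchanges cups with caps, turns a $(k,\ell)$-cup-cap-saturated set into an $(\ell,k)$-cup-cap-saturated one, and leaves every shadow unchanged (so the roles of the hypotheses $k\ge4$ and $\ell\ge4$ get swapped, but both are present).

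For the cup statement I would argue by contradiction. Assume no two $(k-1)$-cups of $P$ have disjoint shadows; by \Cref{prop:cc_satexist} there is at least one $(k-1)$-cup. Among all $(k-1)$-cups, pick one whose shadow $(a^\ast,b^\ast)$ has $a^\ast$ maximal and one whose shadow $(c^\ast,d^\ast)$ has $d^\ast$ minimal. Since these two shadows are not disjoint, $a^\ast<d^\ast$; fix any $t\in(a^\ast,d^\ast)$ that is not an $x$-coordinate of $P$. Then every $(k-1)$-cup has its left endpoint $\le a^\ast<t$ and its right endpoint $\ge d^\ast>t$, so $t$ lies in the shadow of every $(k-1)$-cup. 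Equivalently, every $(k-1)$-cup has a vertex on each side of the vertical line $x=t$, and hence every cup of $P$ contained in $\{x<t\}$, and every cup contained in $\{x>t\}$, has at most $k-2$ vertices. (This intervals argument is just the one-dimensional Helly property made explicit.)

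Now introduce a single new point $q=(t,H)$ with $H$ chosen very large, and generic, so that $P\cup\{q\}$ is generic. The claim is that $P\cup\{q\}$ is still $(k,\ell)$-cup-cap-free, contradicting saturation. Any $k$-cup or $\ell$-cap in $P\cup\{q\}$ must use $q$. As $H\to\infty$, the slope from any point of $P$ to the left of $q$ up to $q$ tends to $+\infty$, and the slope from $q$ down to any point of $P$ to the right of $q$ tends to $-\infty$; so for $H$ large enough one checks, case by case: (a) $q$ cannot be an interior vertex of a cup, since the two consecutive slopes at $q$ would drop from $+\infty$ to $-\infty$, violating the increasing-slope condition — hence a cup through $q$ is obtained by appending $q$ to a cup contained in $\{x<t\}$ or prepending $q$ to a cup contained in $\{x>t\}$, so it has at most $k-1$ vertices; and (b) $q$ cannot be an endpoint of a cap with more than two vertices (the forced slope at $q$ would be $+\infty$ at a right end, $-\infty$ at a left end, contradicting the decreasing-slope condition), while if $q$ is an interior vertex of a cap then both neighbours of $q$ must be the two extreme vertices, so that cap is exactly a $3$-cap $p\,q\,p'$ — hence a cap through $q$ has at most $3<\ell$ vertices. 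Therefore no $k$-cup and no $\ell$-cap is created, the desired contradiction. Running this on $P$ yields the two $(k-1)$-cups with disjoint shadows, and running it on the horizontal reflection of $P$ yields the two $(\ell-1)$-caps with disjoint shadows.

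The one genuinely load-bearing idea is the extraction of a common point $t$ of all the shadows, together with the observation that a single point placed far above the line $x=t$ simultaneously destroys every way of completing a $k$-cup or an $\ell$-cap; the hypotheses $\ell\ge4$ and (after reflection) $k\ge4$ enter precisely to rule out the unavoidable short $3$-cap through $q$. The only routine bookkeeping is the meaning of ``$H$ very large'': finitely many slope comparisons and finitely many collinearity (general-position) conditions all resolve favourably once $H$ exceeds an explicit threshold depending on $P$ and $t$, so a valid choice of $H$ exists. I do not expect any real obstacle beyond writing out the finitely many slope comparisons in (a) and (b) carefully.
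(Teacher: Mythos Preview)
Your proposal is correct and follows essentially the same approach as the paper: assume all $(k-1)$-cup shadows pairwise intersect, extract a common abscissa $t$ (you spell out the one-dimensional Helly step explicitly, the paper just asserts nonempty intersection), place a point $q=(t,H)$ with $H$ very large, and verify by the same slope dichotomy that $q$ cannot complete a $k$-cup (it must be an endpoint, leaving a $(k-1)$-cup on one side of $t$, impossible) nor an $\ell$-cap (at worst a $3$-cap, harmless since $\ell\ge4$). The reflection to handle caps matches the paper's ``similar reasons''.
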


\begin{proof}
    We show the existence of two $(k-1)$-cups with disjoint shadows, and there exist two such $(\ell-1)$-caps for similar reasons. For a contradiction, suppose there are no $(k-1)$-cups with disjoint shadows. Then the intersection of all shadows from $(k-1)$-cups in $P$ is nonempty, and so there exists some $x_* \in \mathbb{R}$ in this intersection with $x(p) \neq x^*$ for all $p \in P$. Consider a generic point $q \eqdef (x^*, y)$, where $y$ is some sufficiently large number such that, together with $q$, 
    \begin{itemize}
        \item every $p_i, p_j \in P$ with $x(p_i) < x(p_j) < x(q)$ or $x(q) < x(p_i) < x(p_j)$ form a $3$-cup, and
        \item every $p_i, p_j \in P$ with $x(p_i) < x(q) < x(p_j)$ form a $3$-cap. 
    \end{itemize}
    We argue that $P \cup \{q\}$ is $(k, \ell)$-cup-cap-free, a contradiction to the hypothesis that $P$ is saturated. 
    Indeed, the construction shows that $q$ cannot be part of any $4$-cap in $P \cup \{q\}$, and so $P \cup \{q\}$ contains no $\ell$-cap. If $q$ is part of some $k$-cup $\overline{a_1 \dotsb a_s q b_1 \dotsb b_t}$ in $P \cup \{q\}$, then $s \ge 1$ and $t \ge 1$, for otherwise $x^*$ would lie outside the shadow of the $(k-1)$-cup $\overline{a_1 \dotsb a_s b_1 \dotsb b_t}$. However, this contradicts the fact that $\overline{a_1qb_1}$ is a $3$-cap, and so $P \cup \{q\}$ contains no $k$-cup. 
\end{proof}

\begin{proof}[Proof of \Cref{thm:sat45}]
First, we show that $\sat_{\sf c}(4, 5) \leq 8$. It suffices to construct a generic $8$-point set that is $(4, 5)$-cup-cap-saturated. Consider the following $8$ points as $P$: 
\begin{equation*}
    (-60, 40),~(-40, 20),~(-20, 16),~(0, 10),~(5, -50),~(15, -40),~(25, -40),~(125, -230).
\end{equation*}
It is straightforward to verify that $P$ is $(4, 5)$-cup-cap-free. We use a computer program to check the saturation property. This program first computes all the lines generated by pairs in $P$ and the vertical lines through every point of $P$ and works out all the regions of $\mathbb{R}^2$ enclosed by these lines. Then the program picks a point from each region, adds it to the point set $P$, and verifies that the new set contains a $4$-cup or a $5$-cap. See \Cref{sec:code} for our supplementary code.

We remark that one can also check by hand that $P$ is saturated, albeit this process is slightly tedious. For example, name the points of $P$ as $p_1,p_2,\dots,p_8$ with increasing $x$-coordinates and we argue that any new point $q$ added into the triangle $\Delta$ spanned by $p_2, p_3, p_4$ is part of a $4$-cup or $5$-cap. Consider the vertical line at $p_3$, the line $p_3p_6$, and the line $p_1p_3$. These three lines cut $\Delta$ into four subtriangles $\Delta_1,\Delta_2,\Delta_3,\Delta_4$ from left to right. See \Cref{fig:8points} for an illustration. We can check through elementary algebra the intersection between the lines $p_1p_3$ and $p_6p_8$ are outside $\Delta$, and so $\Delta_3$ is below the line $p_6p_8$. It follows that $\overline{p_1p_2qp_3}$ is a $4$-cup if $q\in \Delta_1$, $\overline{p_3qp_6p_7}$ is a $4$-cup if $q \in \Delta_2$, $\overline{p_2p_3qp_6p_8}$ is a $5$-cap if $q \in \Delta_3$, and $\overline{p_1p_3qp_4}$ is a $4$-cup if $q\in \Delta_4$. 

\begin{figure}[!ht]
    \centering
    \definecolor{qqqqff}{rgb}{0.,0.1,0.9}
\definecolor{ffqqqq}{rgb}{0.9,0.1,0.}
\scalebox{1.0}{
\begin{tikzpicture}[line cap=round,line join=round,>=triangle 45,x=1.4cm,y=1.4cm]
\clip(4.003117423407316,1.2) rectangle (11.741941955611567,5.783981863045565);
\draw [line width=1.5pt] (7.974368387419457,5.433757687135593)-- (7.974368387419457,1.3553406708936568);
\draw [line width=1.5pt] (7.974368387419457,3.773274724328437)-- (11.403015333763188,2.1574670092681094);
\draw [line width=1.5pt] (11.403015333763188,2.1574670092681094)-- (4.217770950577939,4.1910267403582715);
\draw [line width=1.5pt] (4.217770950577939,4.1910267403582715)-- (7.974368387419457,3.773274724328437);
\draw [line width=1.5pt,dash pattern=on 3pt off 3pt] (9.211289845810455,5.433757687135593)-- (10.634781657573571,1.2536626843391487);
\draw [line width=1.5pt,color=ffqqqq] (6.375603776345836,4.9253677543630525)-- (11.566611099167737,1.1846523832205462);
\draw [line width=1.5pt,color=qqqqff] (9.932150211905967,1.2582545737640638)-- (6.804910830687094,5.275591930273025);
\begin{scriptsize}
\draw (9.25,5.569328335874936) node[anchor=north west] {$p_6p_8$};
\draw (11.25,2.5) node[anchor=north west] {$p_4$};
\draw (4.05,4.5) node[anchor=north west] {$p_2$};
\draw (7.98,4.05) node[anchor=north west] {$p_3$};
\draw (5.9,4.947962862486276) node[anchor=north west] {$p_1p_3$};
\draw (6.8,5.5) node[anchor=north west] {$p_3p_6$};
\draw (6.7823157225638795,3.8069099022634627) node[anchor=north west] {$\Delta_1$};
\draw (7.95,3.47) node[anchor=north west] {$\Delta_2$};
\draw (8.51,3.25) node[anchor=north west] {$\Delta_3$};
\draw (9.25,3.1) node[anchor=north west] {$\Delta_4$};
\draw [fill=black] (7.974368387419457,3.773274724328437) circle (1.5pt);
\draw [fill=black] (11.403015333763188,2.1574670092681094) circle (1.5pt);
\draw [fill=black] (4.217770950577939,4.1910267403582715) circle (1.5pt);
\end{scriptsize}
\end{tikzpicture}
}
\caption{Around the triangle $\Delta$ spanned by $p_2,p_3,p_4$. The line $p_6p_8$ does not divide $\Delta_4$.}
\label{fig:8points}
\end{figure}
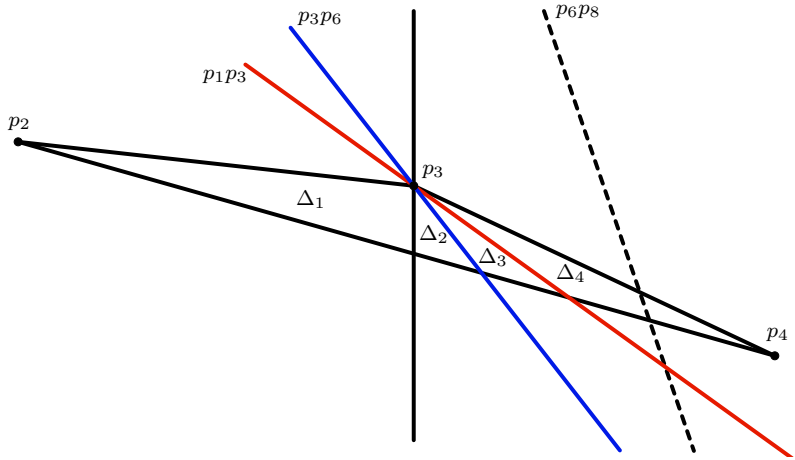

Next, we prove that $\sat_{\sf c}(4,5)\geq 8$. Let $P$ be a $(4, 5)$-cup-cap-saturated point set, we argue that $|P| \geq 8$. By \Cref{shadow}, we can find two $4$-caps in $P$, $\overline{p_1p_2p_3p_4}$ and $\overline{p_5p_6p_7p_8}$ with $x(p_4) \le x(p_5)$, whose shadows are disjoint. If the points $p_4 \neq p_5$, then $|P| \geq 8$ and the proof is done. Assume that $p_4 = p_5 = p$. Then the shadow of any $3$-cup within $P_- \eqdef \{p_1, p_2, p_3, p, p_6, p_7, p_8\}$ must contain $x(p)$. By \Cref{shadow}, there are two $3$-cups with disjoint shadows in $P$. So, at least one point from $P$ does not belong to $P_-$, and hence $|P| \geq 8$.
\end{proof}

We shall use \Cref{thm:sat45} to upper bound $\sat_{\sf c}(k,\ell)$. It follows from the next lemma.

\begin{lemma}\label{lem:cupcombine}
    For any $k,\ell \geq 3$, we have $\sat_{c}(k,\ell)\leq \sat_{c}(k-1,\ell)+\sat_{c}(k,\ell-1)$. 
\end{lemma}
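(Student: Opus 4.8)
The plan is to glue a smallest $(k-1,\ell)$-cup-cap-saturated set $A$ and a smallest $(k,\ell-1)$-cup-cap-saturated set $B$ into one $(k,\ell)$-cup-cap-saturated set $P$ of size $|A|+|B|$, mimicking the recursion behind \eqref{eq:EST}. First I would fix scaled and translated copies of $A$ and $B$ so that the copy of $A$ sits in a tiny region near the origin, the copy of $B$ sits in a tiny region high up and far to the right, and a generic perturbation makes $P := A\cup B$ generic. Scaling $A$ and $B$ down makes all slopes \emph{within} $A$ and all slopes within $B$ tiny, while pushing $B$ far up and to the right makes every slope determined by one point of $A$ and one point of $B$ huge; call this the \emph{dominance} property: every cross-slope strictly exceeds every slope internal to $A$ or to $B$.

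Next I would check that $P$ is $(k,\ell)$-cup-cap-free by the classical Erd\H{o}s--Szekeres argument. In any cup meeting both parts, the edge from its last $A$-point $a$ to its first $B$-point $b$ has $\slp(ab)$ dominating every later slope (which is internal to $B$); since the slopes of a cup increase, the cup can contain at most one point of $B$, so it has at most $(k-2)+1=k-1$ points. Symmetrically, a cap meeting both parts contains at most one point of $A$ and so has at most $1+(\ell-2)=\ell-1$ points. As cups and caps living inside a single part are short because that part is free, $P$ is $(k,\ell)$-cup-cap-free; together with $|P|=|A|+|B|=\sat_{\sf c}(k-1,\ell)+\sat_{\sf c}(k,\ell-1)$ this yields the claimed bound once saturation is proved.

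For saturation, let $q$ be any point with $P\cup\{q\}$ generic. Since $A$ lies entirely to the left of $B$, either $q$ lies to the left of all of $B$, or $q$ lies to the right of all of $A$; I treat the first situation, the second being analogous with the roles of cups and caps, of $k$ and $\ell$, and of $A$ and $B$ interchanged. Apply the saturation of $A$ to $A\cup\{q\}$: it contains a $(k-1)$-cup or an $\ell$-cap, and in the latter case we are done since $A\cup\{q\}\subseteq P\cup\{q\}$. In the former case, write the cup as $\overline{c_1\cdots c_{k-1}}$; as $q$ is left of $B$, every $c_i$ is left of every point of $B$, so it suffices to find $b\in B$ with $\slp(c_{k-1}b)>\slp(c_{k-2}c_{k-1})$, which turns $\overline{c_1\cdots c_{k-1}b}$ into a $k$-cup. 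For $q$ in a bounded range the dominance property supplies such a $b$ at once, since $\slp(c_{k-1}b)$ is then one of the huge cross-slopes while $\slp(c_{k-2}c_{k-1})$ is comparatively small. The remaining positions of $q$ are handled by separate ad hoc arguments using \Cref{prop:cc_satexist}: for instance, if $q$ is far above the configuration and left of $B$, prepending $q$ to a $(k-1)$-cup of $B$ (which exists by \Cref{prop:cc_satexist}) gives a $k$-cup because the descent from $q$ is steep; dually, a point $q$ far below and right of $A$ appended to an $(\ell-1)$-cap of $A$ gives an $\ell$-cap.

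The step I expect to be the main obstacle is exactly this completion: proving that for \emph{every} position of the new point $q$ one of the partial configurations obtained from the saturation of $A$ (or of $B$) can be closed up into a full $k$-cup or $\ell$-cap. The convenient dominance estimate only applies when $q$ stays within a bounded neighbourhood of the configuration, so the placement must be quantified with care and the extreme positions of $q$ (very high, very low, or wedged between the two parts) covered by hand, leaning on the auxiliary $(k-1)$-cups and $(\ell-1)$-caps that \Cref{prop:cc_satexist} guarantees inside $A$ and $B$ — this is also where the hypothesis $k,\ell\ge 3$ enters, as it is precisely what makes \Cref{prop:cc_satexist} applicable to both parts.
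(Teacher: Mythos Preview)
Your construction and the freeness argument match the paper's, and you are right that the saturation step is the crux. The gap you flag in your last paragraph is genuine, and the ad~hoc patches you list do not close it. Take $q$ far below $A$ with $x(q)$ strictly inside the (tiny) $x$-range of $A$; then $q$ is left of $B$, so you invoke the saturation of $A$. Every $(k-1)$-cup in $A\cup\{q\}$ now has $q$ as an interior vertex (it is neither leftmost nor rightmost), so its last edge is $\overline{q\,a}$ or $\overline{a'\,a}$ with $a\in A$, and in the first case $\slp(q,a)$ can be made larger than any pre-chosen cross slope by pushing $q$ further down. Hence no $b\in B$ extends the cup. Your fallback ``$q$ far below and right of $A$'' does not apply, since $q$ sits inside $A$'s $x$-range; and the symmetric argument via $B$ (prepending some $a\in A$ to a cap $\overline{q\,b_2\cdots}$) fails too, because any such $a$ is left of $q$ and $\slp(a,q)$ is hugely negative while $\slp(q,b_2)$ is positive.

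The missing idea, which the paper uses, is to apply the saturation of $B$ not to $q$ but to the \emph{rightmost vertex} $c_{k-1}$ of the $(k-1)$-cup $\overline{c_1\cdots c_{k-1}}$ you already obtained. Since $c_{k-1}$ lies left of $B$, the set $B\cup\{c_{k-1}\}$ contains a $k$-cup (and you are done) or an $(\ell-1)$-cap, which must begin at $c_{k-1}$. Now the cup and the cap share the endpoint $c_{k-1}$, and a single comparison of $\slp(c_{k-2},c_{k-1})$ with the first slope of the cap shows that one of them extends to a $k$-cup or an $\ell$-cap in $P\cup\{q\}$. This works uniformly for every position of $q$ and eliminates all case analysis.
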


\begin{proof}
    Suppose $P_{k-1, \ell} \subset \mathbb{R}^2$ and $P_{k, \ell-1} \subset \mathbb{R}^2$ are $(k-1, \ell)$- and $(k, \ell-1)$-cup-cap-saturated, respectively. We construct $P$ with $|P| = |P_{k-1, \ell}| + |P_{k, \ell-1}|$ that is $(k, \ell)$-cup-cap-saturated. 
    
    We begin with a fixed vertical line $h$. Put a translated copy $A$ of $P_{k-1, \ell}$ to the left of $h$, and a translated copy $B$ of $P_{k, \ell-1}$ to the right of $h$. Vertically shift $B$ to somewhere very high so that every point in $A$ is below all lines spanned by $B$, and every point in $B$ is above all lines spanned by $A$. This implies that
    \begin{itemize}
        \item any one point from $A$ and two points from $B$ form a $3$-cap, and
        \item any two points from $A$ and one point from $B$ form a $3$-cup. 
    \end{itemize}
    These conditions guarantee that $P \eqdef A \cup B$ is $(k, \ell)$-cup-cap-free. 
    
    We claim that $P$ is $(k, \ell)$-saturated. It suffices to disprove that some point $q \notin P$ makes a generic $P \cup \{q\}$ that is $(k, \ell)$-cup-cap-free. Without loss of generality, assume that $q$ lies to the left of $h$ (possibly on $h$). Since $A$ is $(k-1, \ell)$-cup-cap-saturated and $P \cup \{q\}$ is $(k, \ell)$-cup-cap-free, the point $q$ has to be part of a $(k-1)$-cup $C_1 = \overline{a_1 \dots a_{k-1}}$ in $A \cup \{q\}$, where $a_{k-1}$ (possibly be $q$) lies to the left of $h$ (possibly on $h$). Since $B$ is $(k, \ell-1)$-cup-cap-saturated and $P \cup \{q\}$ is $(k, \ell)$-cup-cap-free, the point $a_{k-1}$ has to be part of an $(\ell-1)$-cap $C_2 = \overline{b_1 \dots b_{\ell-1}}$ where $b_1 = a_{k-1}$. So, the cup $C_1$ and the cap $C_2$ share $a_{k-1} = b_1$, and hence $C_1$ can be extended by $b_2$ or $C_2$ can be extended by $a_{k-2}$, which contradicts the fact that $P \cup \{q\}$ is $(k, \ell)$-cup-cap-free. This completes the proof.
\end{proof}

\begin{proof}[Proof of Theorem~\ref{thm:sat_c}]
    The upper bound follows from solving (for $k, \ell \ge 1$) the recursion
    \[
    \sat_{\sf c}(k,\ell) \leq \sat_{c}(k-1,\ell) + \sat_{c}(k,\ell-1)
    \]
    from \Cref{lem:cupcombine}, with initial values given by \Cref{thm:sat_csmall} and \Cref{thm:sat45}. 
    
    Notice that any two cups (resp.~caps) with disjoint shadows share no more than one point. Since any cup and any cap share no more than two points, from \Cref{shadow} we deduce that
    \begin{equation*}
        \sat_{\sf c}(k,\ell) \geq 2(k-1) + 2(\ell-1) - 1 - 1 - 2 - 2 - 2 - 2 = 2k + 2\ell - 14. \qedhere
    \end{equation*}
\end{proof}

We remark that it is possible to improve the number ``$14$'' in the lower bound by counting the intersections of those cups and caps with disjoint shadows more carefully. 

\section{Saturation for convex polygons}\label{sec:gon}

This section is devoted to the proof of \Cref{thm:sat_g}. Recall that Erd\H{o}s and Szekeres established the result $\ram_{\sf g}(n) \ge 2^{n-2}$ by taking a union of $(i+1, n+1-i)$-cup-cap-free sets appropriately along a convex curve. Roughly speaking, we shall similarly establish the lower bound on $\sat_{\sf g}(n)$ via replacing each $(i+1, n+1-i)$-cup-cap-free set before by an $(i+1, n+1-i)$-cup-cap-saturated set. 

For a technical issue in \Cref{lem:rotate}, we need the following notion: a planar point set is called \textit{very generic} if it is in general position, and its members together with the intersection points of the lines spanned by it all have distinct $x$-coordinates. The main result of this section is the following: 

\begin{proposition}\label{thm:gconstruction}
    For any positive integer $n\geq 3$, suppose that $P_i$ is a very generic planar point set that is $(i+1,n+1-i)$-cup-cap-saturated for each $1\leq i< n$, then there exists an $n$-gon-saturated set $P$ such that $|P| = \sum_{i=1}^{n-1} |P_i|$.
\end{proposition}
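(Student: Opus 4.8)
The plan is to mimic the structure of the classical Erd\H os--Szekeres construction from \cite{ErdosSzekeres1961}, but to substitute each of its ``building blocks'' by a smaller cup-cap-saturated set $P_i$ supplied by the hypothesis. Recall that the Erd\H os--Szekeres construction glues together sets $X_1, \dots, X_{n-1}$, where $X_i$ is in ``$i$-cup, $(n-i)$-cap'' position, arranged so that $X_i$ sits far to the right of and far below $X_{i+1}$; the key combinatorial fact is that any convex polygon picks out an $i$-cup from exactly one block $X_i$ and an $(n-i)$-cap from the adjacent block $X_{i+1}$ (where the cup's last point is the cap's first point), so being $n$-gon-free amounts to each $X_i$ being $(i+1)$-cup-free and $(n-i+1)$-cap-free. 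First I would set up the geometry precisely: place a copy $P_i'$ of $P_i$ inside a tiny disk $D_i$, with the disks arranged along a steeply ``staircase'' pattern so that (a) for $i < j$, every point of $D_i$ lies below and to the right of every point of $D_j$, and (b) any two points in distinct disks $D_i, D_j$ together with any third point see the ``long-range'' slopes dominate, so a triple spanning $\ge 2$ blocks is a cup if its leftmost two points are in the same (lower-index) block and a cap if its rightmost two points share a block. The ``very generic'' hypothesis is exactly what lets me choose these disks and the embedded copies so that the whole set $P = \bigcup_i P_i'$ is in general position and $x$-coordinates never collide, even after the later perturbations.

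The second step is to verify $P$ is $n$-gon-free. Here I would prove the standard block lemma: any subset $S$ of $P$ in convex position, sorted by $x$-coordinate, can be split as $S = S_1 \cup S_2 \cup \dots$ where $S_j = S \cap D_{i_j}$ with $i_1 < i_2 < \dots$ (consecutive in the convex chain but the block-indices need not be consecutive), and moreover the ``upper part'' of the convex hull restricted to blocks is a cap-structure while the ``lower part'' is a cup-structure — more precisely, writing the convex polygon as a cup followed by a cap (its lower chain then upper chain from leftmost to rightmost vertex), the lower chain meets each block in a cup and the upper chain meets each block in a cap, and across a block boundary the chain-within-block inherits the long-range convexity. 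A careful accounting then shows: if $S$ has $n$ points, some block $D_i$ must contribute either an $(i+1)$-cup to the lower chain or an $(n-i+1)$-cap to the upper chain, contradicting that $P_i$ is $(i+1, n+1-i)$-cup-cap-free. The counting is the usual ``telescoping'' argument: along the lower chain, passing from block $D_i$ into block $D_j$ with $j<i$ costs at least one vertex of overlap, and the indices weight the cup-lengths so that the total forces a long cup or cap somewhere.

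The third and most delicate step is saturation: given any $q \notin P$ with $P \cup \{q\}$ in general position, I must produce an $n$-gon. The natural move is: $q$ lies ``near'' some block $D_i$ in the sense that, among the blocks, $q$'s $x$-coordinate falls so that when we adjoin $q$ to the copy $P_i'$ the point $q$ is generic with respect to $P_i$ (after an affine rescaling bringing $D_i$ back to where $P_i$ lived); by the $(i+1,n+1-i)$-cup-cap-saturation of $P_i$, the set $P_i' \cup \{q\}$ contains an $(i+1)$-cup or an $(n+1-i)$-cap. In the first case, prepend to this $(i+1)$-cup a $1$-point ``$(i-1)$-cup-extension'' coming from blocks $D_1, \dots, D_{i-1}$? — no: more carefully, an $(i+1)$-cup inside $D_i$ together with an $(n-1-i)$-cap reaching up through blocks $D_{i+1}, \dots, D_{n-1}$ forms, after checking the convex-position bookkeeping, a convex $n$-gon; symmetrically in the cap case I extend downward through $D_1, \dots, D_{i-1}$. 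The obstacle I expect here — and the part I would treat most carefully — is handling the boundary cases where $q$ does not fall cleanly ``inside'' one block (e.g. $q$ is between two disks, or far to the left/right/top/bottom of everything), and verifying that the long-range convexity really does let the short cup/cap inside one block be completed to a genuine $n$-gon using points from the other blocks without accidentally breaking convexity at the seams; this requires the two ``any point and two points'' slope conditions to be arranged with enough slack, and is where the precise placement of the disks $D_i$ (and possibly adding an extreme ``sentinel'' configuration, or arguing that an extreme $q$ already spans many blocks and hence trivially creates a large convex polygon) does the work. Finally I would remark that taking each $P_i$ to be the Erd\H os--Szekeres-optimal $(i+1,n+1-i)$-cup-cap-free set of size $\binom{n-3}{i-1}$ recovers, via this same construction, the original construction of \cite{ErdosSzekeres1961} and shows it is saturated — while plugging in the smaller saturated sets from \Cref{thm:sat45} and \Cref{lem:cupcombine} yields the savings claimed in \Cref{thm:sat_g}.
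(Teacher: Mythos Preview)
Your overall architecture---place flattened copies of the $P_i$ along a convex curve, prove $n$-gon-freeness by the standard block-counting argument, then prove saturation case-by-case---matches the paper. The $n$-gon-free argument you sketch is essentially correct. But the saturation step has a real gap, and it is not where you expect it to be.

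You flag the ``boundary cases'' (when $q$ lies between blocks or far outside) as the delicate part. In the paper those cases are actually the easy ones: when $q$ is in a tube between $D_i$ and $D_{i+1}$ or far outside, a direct geometric argument produces an $n$-cup (or one combines a cup from $P_j\cup\{q\}$ with a cap from $P_{j+1}\cup\{q\}$ at a transition index). The hard case is precisely when $q$ lies \emph{inside} a disk $D_i$. Your plan there---invoke saturation of $P_i$ to get an $(i{+}1)$-cup or $(n{+}1{-}i)$-cap in $P_i'\cup\{q\}$, then glue on points from the other blocks---can fail. Concretely: take $q$ just above the flat copy $P_i'$ and slightly to the right of its rightmost point. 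Saturation hands you an $(i{+}1)$-cup $\overline{a_1\cdots a_i q}$, but the final edge $a_iq$ can have arbitrarily large slope, larger than the slope from $q$ to any point of $D_{i+1}$. Then neither ``extend the cup to an $n$-cup through $D_{i+1},\dots,D_{n-1}$'' nor ``put a cap from higher blocks on top via \Cref{lem:cupcap_combine}'' works, because the line $\lin(a_i,q)$ passes above $D_{i+1}$.

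The paper's fix is a rotation trick: one first proves (\Cref{lem:rotate}) that a sufficiently flattened very generic set remains cup-cap-saturated after any rotation of angle $<\pi/3$. Then, for $q\in\bD_i$, one rotates the whole picture around $q$ so that $q$ becomes the rightmost point relative to $P_{i-1}$ and $P_{i+1}$; in the rotated frame every line $\lin(q,p)$ with $p\in P_i'$ is forced below $P_{i+1}'$ and above $P_{i-1}'$, and now the cup/cap from the (rotated) saturation of $P_i'$ combines cleanly. This is the essential role of the ``very generic'' hypothesis---it is what makes the rotation-stable saturation lemma go through---and not merely, as you suggest, a device for keeping $x$-coordinates distinct.
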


We point out that our upper bounds in previous sections, \Cref{thm:sat_csmall}, \Cref{thm:sat45,thm:sat_c}, all give us $(k,\ell)$-cup-cap-saturated sets that have the claimed size and are very generic, although the very generic property is not stated. Together with \Cref{thm:gconstruction} and \eqref{eq:ES_construction}, these upper bounds imply that for every $n \ge 7$, 
\[
\sat_{\sf g}(n) \le \sum_{i=1}^{n-1} \binom{n-2}{i-1} - 2\sum_{i=1}^{n-1} \binom{n-6}{i-3} = 2^{n-2} - 2^{n-5} = \frac{7}{8} \cdot 2^{n-2} \le \frac{7}{8} \ram_{\sf g}(n). 
\]
This concludes the proof of \Cref{thm:sat_g}. 

In \Cref{thm:gconstruction}, the $n = 3, \, n = 4$ cases are easy. Indeed, every $2$-point set is $3$-gon-saturated, and every $4$-point set not in convex position is $4$-gon saturated. For the rest of this section, we implicitly assume $n \ge 5$. To prove \Cref{thm:gconstruction}, our strategy is to place an appropriate copy of a $(i+1,n+1-i)$-cup-cap-saturated set around the point $(i, i^2)$ for $i = 1, \dots, n$. This is motivated by the original Erd\H{o}s--Szekeres construction~\cite{ErdosSzekeres1961} showing $\ram_{\sf g}(n) \ge 2^{n-2}$. We remark that a $(k,\ell)$-cup-cap-free point set of size $\ram_{\sf c}(k,\ell)$ is always $(k,\ell)$-cup-cap-saturated. Furthermore, a small rotation can be applied such that it becomes very generic. In this way, our \Cref{thm:gconstruction} implies that the construction of Erd\H{o}s and Szekeres is also $n$-gon-saturated.

\smallskip

We begin with some preparation. For two points $p, q \in \mathbb{R}^2$ with $x(p) \neq x(q)$, denote by
\begin{itemize}
    \item $\lin(p)$ the vertical line through $p$,
    \item $\lin(p, q)$ the unique line through $p$ and $q$, 
    \item $\ray(p, q)$ the ray emanating from $p$ through $q$, 
    \item $\slp(p, q)$ the slope of $\lin(p, q)$. 
\end{itemize}

We say that a point set $P$ is \textit{$(k,\ell;\varphi)$-cup-cap-saturated} if $P$ will be $(k, \ell)$-cup-cap-saturated after a rotation of any angle $\theta \in (-\varphi,\varphi)$ with arbitrary center. The next result shows that, given $\varphi$, if we flatten a $(k, \ell)$-cup-cap-saturated set enough, then it becomes $(k,\ell;\varphi)$-cup-cap-saturated.

\begin{lemma}\label{lem:rotate}
    For a very generic $(k, \ell)$-cup-cap-saturated set $P_0$ and a positive real number $\varphi< \pi/2$, there exists some sufficiently small $\delta>0$ such that the ``flattening'' map $\sigma \colon (x, y) \mapsto (x, \delta y)$ produces a set $P \eqdef \sigma(P_0)$ which is $(k, \ell; \varphi)$-cup-cap-saturated.
\end{lemma}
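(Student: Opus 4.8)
The plan is to exploit the fact that the saturation property of $P_0$ is ``robust'': it is witnessed by finitely many cup/cap configurations and finitely many ``killing'' regions, all defined by strict inequalities on slopes. First I would recall what saturation means operationally. Since $P_0$ is $(k,\ell)$-cup-cap-saturated, for every region $R$ in the arrangement of all lines spanned by $P_0$ together with the vertical lines through its points, and for a representative point $q_R \in R$, the set $P_0 \cup \{q_R\}$ contains a $k$-cup or an $\ell$-cap using $q_R$. Each such witness is a statement of the form ``these chosen slopes are strictly increasing'' or ``strictly decreasing''. There are only finitely many regions, and finitely many witnesses, so there is a uniform $\veps>0$ by which all the relevant slope inequalities are satisfied with room to spare; likewise, $P_0$ being $(k,\ell)$-cup-cap-free and very generic is an open condition. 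The key point is that rotation by a small angle and the flattening map $\sigma$ are both small perturbations of the slopes, so for $\delta$ small enough and $|\theta|<\varphi$, none of these strict inequalities — for freeness, for genericity, or for the saturation witnesses — can flip sign.

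The steps, in order, would be: (1) Fix $\varphi<\pi/2$ and observe that a rotation by $\theta\in(-\varphi,\varphi)$ changes the slope of a line with slope $m$ to $\frac{m\cos\theta+\sin\theta}{\cos\theta - m\sin\theta}$; since $P_0$ is very generic, no spanned line is vertical after such a rotation (the denominator does not vanish, because $\varphi<\pi/2$ and the ``bad'' angles where some line becomes vertical form a finite set avoided on the open interval after we possibly shrink $\varphi$ — but in fact very genericity was arranged precisely so that rotation keeps all $x$-coordinates, including the intersection points, distinct). (2) Note that $\sigma$ scales all slopes by $\delta$, hence it preserves the sign of every slope difference $m_i - m_j$; in particular $P \eqdef \sigma(P_0)$ is again very generic and $(k,\ell)$-cup-cap-free, and for the representative points $q_R$ the same combinatorial witnesses still work, so $P$ itself is $(k,\ell)$-cup-cap-saturated. (3) Now the real content: we must handle rotation \emph{after} flattening. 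Apply a rotation $\rho_\theta$ with $|\theta|<\varphi$ to $P$. Each spanned line of $P$ has slope of the form $\delta m$ with $m$ bounded (since $P_0$ is a fixed finite set), so after rotating, its slope is $\frac{\delta m\cos\theta + \sin\theta}{\cos\theta - \delta m\sin\theta}$, which as $\delta\to 0$ converges uniformly in $|\theta|\le\varphi$ and in the finitely many choices of line to $\tan\theta$. (4) Crucially, comparisons between slopes in $P$ are of two kinds: comparisons of two spanned slopes (these differ by $O(\delta)$ after rotation, but their \emph{ordering} is governed by the $\delta$-scaled original slopes, whose pairwise differences are bounded below by $\delta\veps$ for a fixed $\veps>0$; since rotation is a smooth common transformation, order is preserved for $\delta$ small relative to $\theta$-dependence — this needs the monotonicity of $m\mapsto \frac{m\cos\theta+\sin\theta}{\cos\theta-m\sin\theta}$), and comparisons that decide which subtriangle a new point falls into, which are again strict and stable. (5) Finally, argue the new point $q$: after rotating $P$ by $\theta$, adding any generic $q$, we pull back by $\rho_{-\theta}$ to land in $P$ plus a new point $q' = \rho_{-\theta}(q)$ which is generic with respect to $P$; by step (2), $P\cup\{q'\}$ contains a $k$-cup or $\ell$-cap, and cups/caps are preserved by the rigid motion $\rho_\theta$, so $\rho_\theta(P)\cup\{q\}$ contains one too.

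I expect the main obstacle to be step (4): ensuring that the ordering of slopes is genuinely preserved when we compose the (non-uniform in $\delta$) flattening with a rotation whose angle ranges over a \emph{fixed} interval $(-\varphi,\varphi)$ not shrinking with $\delta$. The subtlety is that the map $m\mapsto \frac{m\cos\theta+\sin\theta}{\cos\theta-m\sin\theta}$ is order-preserving on any interval avoiding the pole $m=\cot\theta$; for $|\theta|<\varphi<\pi/2$ and $\delta$ small the scaled slopes $\delta m$ stay in a tiny neighbourhood of $0$, bounded away from $\cot\theta$ which has $|\cot\theta|$ possibly large but nonzero — except as $\theta\to 0$ the pole runs off to infinity, which is harmless, and for $\theta$ bounded away from $0$ the pole is at a fixed finite distance, so choosing $\delta$ below a threshold depending only on $\varphi$ and on $\max_{i,j}|m_i - m_j|$ suffices. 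Once this order-preservation is nailed down, every cup, cap, and region-membership relation is a conjunction of strict slope inequalities that transfer verbatim, so the conclusion follows. The rest is routine bookkeeping over the finitely many configurations and regions, which I would not spell out in detail.
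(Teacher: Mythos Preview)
Your step~(5) contains a genuine gap: cups and caps are \emph{not} preserved by rotations. A cup is a convex chain ordered by $x$-coordinate with increasing consecutive slopes; rotation can reverse the $x$-order of two points, and once that happens the rotated set need be neither a cup nor a cap. Your step~(4) correctly argues that, for $\delta$ small, rotation by $|\theta|<\varphi$ preserves the $x$-order among points of $P=\sigma(P_0)$ (all spanned slopes are $O(\delta)$, well away from the pole $\cot\theta$). But the new point $q'=\rho_{-\theta}(q)$ enjoys no such flatness: the line through $q'$ and a point $p\in P$ can have arbitrary slope, so $\rho_\theta$ may swap their $x$-order. Concretely, if $q'$ lies far above $P$ with $x(q')$ nestled between two points of $P$, then $q'$ may complete an $\ell$-cap in $P\cup\{q'\}$; after rotating by a fixed $\theta$ the image $q$ lands to one side of the whole of $\rho_\theta(P)$, and the image of that cap is no longer a cap. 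So the pull-back/push-forward argument fails as written.

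The paper's proof confronts exactly this obstruction. For each $p\in P$ it introduces the cone $F_{\tau(p)}$ between the vertical line at $\tau(p)$ and the rotated image of the vertical line at $p$; these are precisely the loci where the $x$-order of a new point relative to $p$ is ambiguous under $\tau$. Rather than pulling back the given $q$, the paper first moves inside the arrangement cell $\Omega\ni q$ (cell of lines spanned by $\tau(P)$ together with vertical lines) to a point $q^+$ lying outside every $F_{\tau(p)}$. The existence of such a $q^+$ in every cell is nontrivial and is exactly where the \emph{very generic} hypothesis and the two $\delta$-conditions (i),~(ii) are used. Only then is $\tau^{-1}(q^+)$ fed to the saturation of $P$; the resulting cup or cap $C$ has its $x$-order preserved by $\tau$ (because $q^+$ avoids the cones and the points of $P$ avoid each other's cones by~(i)), so $\tau(C)$ is still a cup or cap, and finally $q$ can replace $q^+$ since they share the cell $\Omega$. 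Your sketch does not supply this cell-by-cell cone-avoidance step, and without it the argument does not close.
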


\begin{proof}
    Let $Q_0$ be the set of intersection points of lines spanned by pairs in $P_0$. Let $\sigma, P$ be defined as in our statement and $Q \eqdef \sigma(Q_0)$. By the nature of ``flattening'', $P$ is also $(k, \ell)$-cup-cap-saturated and $Q$ consists of intersection points of lines spanned by $P$.
    
    For each $p\in P$, denote by $K_p$ the cone swept over by rotating $\lin(p)$ around $p$ with an angle from $[-\varphi,\varphi]$. We take $\delta$ to be sufficiently small such that
    \begin{itemize}
        \item[(i)] $x \not\in K_p$ for all $p \in (P \cup Q) \setminus \{x\}$. 
        \item[(ii)] $K_{p_1} \cap K_{p_2} \cap \lin(p_3, p_4) = \varnothing$ for any distinct $p_1, p_2 \in P$ and any distinct $p_3, p_4 \in P$. 
    \end{itemize}
    The value of $\delta$ satisfying these two conditions is obviously an open set of $\mathbb{R}$, and the very generic condition implies that $\delta = 0$ is in this set. Hence, there is indeed such a choice of $\delta$. 

    Now we prove that $P$ is $(k, \ell; \varphi)$-cup-cap-saturated. Take an arbitrary $\theta\in (-\varphi,\varphi)$ and let $\tau$ be a rotation of angle $\theta$ with an arbitrary center. Consider the image of the original vertical line through $p$, denoted as $\tau(\lin(p))$, it is now a non-vertical line through $\tau(p)$. We use $F_{\tau(p)}$ to denote the closed cone between the two lines $\lin(\tau(p))$ and $\tau(\lin(p))$. The vertex of this cone $F_{\tau(p)}$ is $\tau(p)$ and the angle of this cone is $\theta$. 

    Let $p^+$ be an arbitrary new generic point added into $\tau(P)$. Consider all the lines spanned by $\tau(P)$ together with all the vertical lines passing through points in $\tau(P)$. These lines cut the plane into polygonal cells and we denote the cell containing $p^+$ as $\Omega$. We claim that
    \begin{equation}\label{eq:rotate}
        \text{$\Omega$ contains a point $q^+ \not\in \bigcup_{p \in P} F_{\tau(p)}$.}
    \end{equation}
    Suppose this claim is given for now. Consider the point $\tau^{-1}(q^+)$ added into $P$ which is $(k, \ell)$-cup-cap-saturated, and let $C$ be a resulting $k$-cup or an $\ell$-cap containing $\tau^{-1}(q^+)$. Notice that $\tau$ will not change the relative order between $x \bigl( \tau^{-1}(q^+) \bigr)$ and $x(p)$ for any $p\in C$, because otherwise $q^+$ would be inside $F_{\tau(p)}$. Similarly, the relative order between $x(p_1)$ and $x(p_2)$ will not change for any $p_1,p_2\in C$, otherwise we can deduce $p_1\in K_{p_2}$ violating condition (i). This means $\tau(C)$ is still a $k$-cup or an $\ell$-cap containing $q^+$. Since $p^+$ and $q^+$ are in the same polygonal cell, $p^+$ together with $\tau(C) \setminus \{q^+\}$ forms a $k$-cup or an $\ell$-cap, meaning that $\tau(P)$ is $(k,\ell)$-cup-cap-saturated.

    It suffices to verify \eqref{eq:rotate} then. Our proof is separated into two cases: 
    \begin{itemize}
        \item If the boundary vertices of $\Omega$ are not all in $\bigcup \lin(\tau(p))$, then one of them, denoted as $v$, must be an intersection point of two lines spanned by $\tau(P)$. Since rotation is an affine transformation, $v = \tau(q)$ for some $q \in Q$. If $v \in F_{\tau(p)}$ for some $p \in P$, then $q \in K_p$ which contradicts condition (i). Hence we have $v \not\in \bigcup F_{\tau(p)}$ and we can find $q^+$ very close to $v$ within the interior of $\Omega$.

        \item If the boundary vertices of $\Omega$ are all in $\bigcup \lin(\tau(p))$, then consider a boundary segment $s$ of $\Omega$ that is not vertical (its existence is easy to argue). Let $p_1, p_2$ be two points in $P$ such that the endpoints of $s$ belongs to $\lin(\tau(p_1))$ and $\lin(\tau(p_2))$ respectively. The means the endpoints of $s$ are in $F_{\tau(p_1)}$ and $F_{\tau(p_2)}$ respectively. If $s$ is covered by these two closed sets $F_{\tau(p_1)}$ and $F_{\tau(p_2)}$, then there exists a point on $s$ that is in $F_{\tau(p_1)} \cap F_{\tau(p_2)}$. Pulling-back using $\tau^{-1}$, this means the segment $\tau^{-1}(s)$ contains a point in $K_{p_1}\cap K_{p_2}$ which is a contradiction to condition (ii). Hence, we conclude that $s$ contains a point $q\not\in F_{\tau(p_1)}\cup F_{\tau(p_2)}$. And we can find $q^+$ very close to $q$ on the $\Omega$-side of $s$. \qedhere
    \end{itemize}
\end{proof}

We shall deduce the existence of a large convex polygon by combining a lower cup and a higher cap in many situations. The following facts on combination are quite useful (see \Cref{fig:combine}). Recall that when we denote by $\overline{p_1 \dotsb p_m}$ an $m$-cup or cap, we implicitly assume $x(p_1) < \dots < x(p_m)$.

\begin{figure}[!ht]
    \centering
\definecolor{ffqqqq}{rgb}{0.9,0.1,0.}
\definecolor{qqqqff}{rgb}{0.,0.1,0.9}
\definecolor{sqsqsq}{rgb}{0.12549019607843137,0.12549019607843137,0.12549019607843137}
\scalebox{0.75}{
\begin{tikzpicture}[line cap=round,line join=round,>=triangle 45,x=0.7cm,y=0.7cm]
\clip(2.38,1.58) rectangle (9.28,6.48);
\draw [line width=2.pt,color=qqqqff] (2.86,4.98)-- (4.86,5.98);
\draw [line width=2.pt,color=qqqqff] (4.86,5.98)-- (6.86,5.98);
\draw [line width=2.pt,color=qqqqff] (6.86,5.98)-- (8.86,3.98);
\draw [line width=2.pt,color=ffqqqq] (3.86,2.98)-- (6.,2.);
\draw [line width=2.pt,color=ffqqqq] (6.,2.)-- (8.,2.);
\draw [line width=2.pt,color=ffqqqq] (8.,2.)-- (8.86,3.98);
\draw [line width=2.pt,color=ffqqqq] (3.86,2.98)-- (2.86,4.98);
\begin{scriptsize}
\draw [fill=black] (2.86,4.98) circle (1.5pt);
\draw [fill=black] (4.86,5.98) circle (1.5pt);
\draw [fill=black] (6.86,5.98) circle (1.5pt);
\draw [fill=black] (8.86,3.98) circle (1.5pt);
\draw [fill=black] (3.86,2.98) circle (1.5pt);
\draw [fill=black] (6.,2.) circle (1.5pt);
\draw [fill=black] (8.,2.) circle (1.5pt);
\end{scriptsize}
\end{tikzpicture}
\begin{tikzpicture}[line cap=round,line join=round,>=triangle 45,x=0.7cm,y=0.7cm]
\clip(2.38,1.58) rectangle (9.28,6.48);
\draw [line width=1.pt,domain=2.38:9.28] plot(\x,{(--17.06--1.*\x)/4.});
\draw [line width=1.pt,domain=2.38:9.28] plot(\x,{(--11.04--1.*\x)/5.});
\draw [line width=1.pt,domain=2.38:9.28] plot(\x,{(--16.12-0.98*\x)/4.14});
\draw [line width=1.pt,domain=2.38:9.28] plot(\x,{(-6.16--1.98*\x)/2.86});
\draw [line width=1.pt,domain=2.38:9.28] plot(\x,{(--10.16-0.98*\x)/2.14});
\draw [line width=1.pt,domain=2.38:9.28] plot(\x,{(--4.-0.*\x)/2.});
\draw [line width=1.pt,domain=2.38:9.28] plot(\x,{(-14.12--1.98*\x)/0.86});
\draw [line width=1.pt,domain=2.38:9.28] plot(\x,{(--7.1--1.*\x)/2.});
\draw [line width=1.pt,domain=2.38:9.28] plot(\x,{(--11.96-0.*\x)/2.});
\draw [line width=3.6pt,color=qqqqff] (2.86,4.98)-- (4.86,5.98);
\draw [line width=3.6pt,color=qqqqff] (4.86,5.98)-- (6.86,5.98);
\draw [line width=3.6pt,color=ffqqqq] (3.86,2.98)-- (6.,2.);
\draw [line width=3.6pt,color=ffqqqq] (6.,2.)-- (8.,2.);
\draw [line width=3.6pt,color=ffqqqq] (8.,2.)-- (8.86,3.98);

\begin{scriptsize}
\draw [fill=black] (2.86,4.98) circle (1.5pt);
\draw [fill=black] (4.86,5.98) circle (1.5pt);
\draw [fill=black] (6.86,5.98) circle (1.5pt);
\draw [fill=black] (8.86,3.98) circle (1.5pt);
\draw [fill=black] (3.86,2.98) circle (1.5pt);
\draw [fill=black] (6.,2.) circle (1.5pt);
\draw [fill=black] (8.,2.) circle (1.5pt);
\end{scriptsize}
\end{tikzpicture}
\begin{tikzpicture}[line cap=round,line join=round,>=triangle 45,x=0.7cm,y=0.7cm]
\clip(2.38,1.58) rectangle (9.28,6.48);
\fill[line width=2.pt,color=sqsqsq,fill=sqsqsq,fill opacity=0.10000000149011612] (2.86,4.98) -- (4.86,5.98) -- (6.86,5.98) -- (8.,2.) -- (6.,2.) -- (3.86,2.98) -- cycle;
\draw [line width=2.pt,color=qqqqff] (2.86,4.98)-- (4.86,5.98);
\draw [line width=2.pt,color=qqqqff] (4.86,5.98)-- (6.86,5.98);
\draw [line width=2.pt,color=qqqqff] (6.86,5.98)-- (8.86,3.98);
\draw [line width=2.pt,color=ffqqqq] (3.86,2.98)-- (6.,2.);
\draw [line width=2.pt,color=ffqqqq] (6.,2.)-- (8.,2.);
\draw [line width=2.pt,color=ffqqqq] (8.,2.)-- (8.86,3.98);
\begin{scriptsize}
\draw [fill=black] (2.86,4.98) circle (1.5pt);
\draw [fill=black] (4.86,5.98) circle (1.5pt);
\draw [fill=black] (6.86,5.98) circle (1.5pt);
\draw [fill=black] (8.86,3.98) circle (1.5pt);
\draw [fill=black] (3.86,2.98) circle (1.5pt);
\draw [fill=black] (6.,2.) circle (1.5pt);
\draw [fill=black] (8.,2.) circle (1.5pt);
\end{scriptsize}
\end{tikzpicture}
}
    \caption{Three different ways of combining cups and caps.}
    \label{fig:combine}
\end{figure}
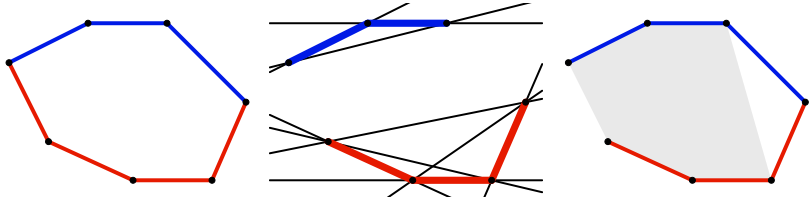

\begin{observation}\label{lem:cupcap_obv_combine}
    Let $C_- = \overline{sp_1 \dotsb p_kt}$ be a cup and $C_+ = \overline{sq_1 \dotsb q_\ell t}$ be a cap with $k, \ell \ge 0$. Then $s, p_1, \dots, p_k,t, q_l, \dots, q_{1}$ form a convex $(k+\ell+2)$-gon with vertices in this order.
\end{observation}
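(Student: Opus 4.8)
The plan is to identify the claimed $(k+\ell+2)$-gon with the boundary of the convex hull $\conv(U)$ of the combined point set $U \eqdef \{s,t\} \cup \{p_1,\dots,p_k\} \cup \{q_1,\dots,q_\ell\}$, by proving that the cup $C_-$ is exactly the lower convex hull of $U$ and that the cap $C_+$ is exactly the upper convex hull of $U$. The degenerate cases $k=0$ or $\ell=0$ (in which the corresponding chain is just the edge $\overline{st}$) are immediate, and the fully degenerate case $k=\ell=0$ is vacuous; so from now on I would assume $k,\ell\ge 1$.

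First I would record the elementary ``shape'' of a single cup. If $\overline{sp_1\dotsb p_kt}$ is a cup, then the edge slopes satisfy $\slp(s,p_1)<\slp(p_1,p_2)<\dots<\slp(p_k,t)$, and a short averaging argument (each $\slp(s,p_i)$ is a convex combination of the first few edge slopes, weighted by $x$-widths, and each $\slp(p_i,t)$ is a convex combination of the last few) shows $\slp(s,p_i)<\slp(s,t)<\slp(p_i,t)$ for every $i$. Hence every $p_i$ lies strictly below the line $\lin(s,t)$, and moreover the chain $s,p_1,\dots,p_k,t$ turns consistently to the left as $x$ increases, so $\{s,p_1,\dots,p_k,t\}$ is in convex position with the cup as its lower hull. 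Dually, the cap $\overline{sq_1\dotsb q_\ell t}$ is the upper hull of $\{s,q_1,\dots,q_\ell,t\}$, with each $q_j$ strictly above $\lin(s,t)$.

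Next I would combine the two chains. Since each $q_j$ lies above $\lin(s,t)$ while the cup chain lies weakly below $\lin(s,t)$, every $q_j$ lies strictly above the cup chain; symmetrically every $p_i$ lies strictly below the cap chain. In particular all $k+\ell+2$ points of $U$ are distinct. Consequently, adding the $q_j$'s does not change the lower hull of $U$, so it equals $\overline{sp_1\dotsb p_kt}$, and likewise the upper hull of $U$ equals $\overline{sq_1\dotsb q_\ell t}$. Since the boundary of $\conv(U)$ is the concatenation of its lower hull (traversed left to right) with its upper hull (traversed right to left), every point of $U$ is a vertex of $\conv(U)$, and reading them off the boundary gives precisely the cyclic order $s,p_1,\dots,p_k,t,q_\ell,\dots,q_1$. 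That is the asserted convex $(k+\ell+2)$-gon.

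\textbf{Main obstacle.} There is no serious obstacle here; the one place that needs a little care is the first step---justifying that strictly increasing edge-slopes force every interior vertex of a cup strictly below the chord $\lin(s,t)$ and make the whole cup a convex chain (and dually for caps), so that all points become genuine vertices of $\conv(U)$. Alternatively, one may bypass the convex-hull packaging and verify convexity directly, by checking that every consecutive triple along $s,p_1,\dots,p_k,t,q_\ell,\dots,q_1$ is a left turn: the triples lying inside the cup are left turns by the cup condition, the triples inside the cap become left turns after reversing the cap's decreasing slopes, and the two ``transition'' triples at $s$ and at $t$ are handled using that the $p_i$'s sit below and the $q_j$'s above $\lin(s,t)$. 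Either route is routine.
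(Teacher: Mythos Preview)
Your argument is correct. The paper states this as an Observation with no proof at all---it is treated as self-evident, serving only as the base case for \Cref{lem:cupcap_combine} and \Cref{lem:cupcap_combine_common}. Your write-up supplies exactly the details one would fill in: the slope-averaging step gives $\slp(s,p_i)<\slp(s,t)<\slp(p_i,t)$, placing the cup strictly below and the cap strictly above the chord $\lin(s,t)$, whence the lower and upper hulls of $U$ are precisely $C_-$ and $C_+$. There is nothing to compare; you have simply written out what the paper leaves implicit.
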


\begin{observation} \label{lem:cupcap_combine}
    Let $C_- = \overline{p_1 \dotsb p_k}$ be a cup and $C_+ = \overline{q_1 \dotsb q_\ell}$ be a cap with $k, \ell \ge 2$. If
    \begin{itemize}
        \item $\lin(p_i, p_j)$ is below $C_+$ for every $1 \le i < j \le k$, and 
        \item $\lin(q_i, q_j)$ is above $C_-$ for every $1 \le i < j \le \ell$, 
    \end{itemize}
    then $p_1, \dots, p_k, q_{\ell}, \dots, q_1$ form a convex $(k+\ell)$-gon with vertices in this order.
\end{observation}

\begin{proof}
    Pick $s,t\in \{p_1,\dots,p_k,q_1,\dots,q_\ell\}$ such that $s$ has the smallest $x$ coordinate and $t$ has the largest. We claim that $\{s\}\cup \{p_1,\dots, p_k\}$ forms a cup. Indeed, if $s \notin \{p_1, \dots, p_k\}$, then $s$ lies above any line $\lin(p_i, p_j)$ and has a smaller $x$-coordinate than $p_1, \dots, p_k$, so it extends the cup. Similarly, $\{t\} \cup \{p_1, \dots, p_k\}$ forms a cup, and hence $\{s,t\} \cup \{p_1,\dots, p_k\}$ is a cup as well. A similar argument shows $\{s,t\} \cup \{q_1,\dots, q_\ell\}$ forms a cap, and so the lemma follows from \Cref{lem:cupcap_obv_combine}. 
\end{proof}

\begin{observation} \label{lem:cupcap_combine_common}
    Suppose $p_1, \dots, p_k, q_\ell, \dots, q_{1}$ form a convex $(k+\ell)$-gon with vertices in this order, where $k \ge 2, \, \ell \ge 2$. If $t$ is a point such that $C_- = \overline{p_1 \dotsb p_kt}$ is a $(k+1)$-cup and $C_+ = \overline{q_1 \dotsb q_{\ell}t}$ is a $(\ell+1)$-cap, then $p_1, \dots, p_k, t, q_\ell, \dots, q_{1}$ form a convex $(k+\ell+1)$-gon with vertices in this order. 
\end{observation}

\begin{proof}
    Pick $s\in \{p_1,\dots,p_k,q_1,\dots,q_\ell\}$ such that $s$ has the smallest $x$-coordinate. We claim that $\{s\}\cup \{p_1,\dots, p_k, t\}$ forms a cup. From the definition of $C_-$ we see that $\overline{p_1 \dotsb p_k}$ is a cup. If $s$ is not in $p_1, \dots, p_k$, then, since $\{p_1,\dots,p_k,q_1,\dots,q_\ell\}$ is convex, $s$ lies above any line $\lin(p_i, p_j)$. Also, $s$ has a smaller $x$-coordinate than $p_1, \dots, p_k$, and so it extends the cup $\{p_1, \dots, p_k, t\}$. Similarly, $\{s\}\cup \{q_1,\dots, q_\ell, t\}$ forms a cup, and so the lemma follows from \Cref{lem:cupcap_obv_combine}. 
\end{proof}

We are ready to construct an $n$-gon-saturated set of small size. Let $p_i \eqdef (i, i^2)$ and $\veps \gg \delta>0$ be some sufficiently small constants depending on $n$ that will be fixed later. For each $i$ we will place a ``small and flat'' $(i+1, n+1-i)$-cup-cap-saturated set in an $\delta$-neighborhood of $p_i$. To be precise, for any fixed $\veps$ we can pick $\delta$ and $P_i$ such that 
\begin{enumerate}[label=\Roman*)]
    \item $P_i$ is a $(i+1, n+1-i; \pi/3)$-cup-cap-saturated set; 
    \item $P_i\subset \bB(p_i, \delta)$, where $\bB(x, r)$ denotes the open disk of radius $r$ around $x$; 
    \item $\lvert \slp(x, y) \rvert < \veps$ holds for any distinct $x, y \in P_i$; 
    \item $\lin(x, y)$ is below $P_j$ and $\lin(z, w)$ is above $P_i$ for any distinct $x, y \in P_i$ and any distinct $z, w \in P_j$ with $i < j$; 
    \item $\lin(x,y)$ and $\lin(z,w)$ intersect in $\bB(p_i, \veps)$ for any $(x, y, z, w) \in P_i \times P_{i+1} \times P_{i+1} \times P_{i+2}$. 
\end{enumerate}
 
For any $\veps$ and $\delta$ properties I), II), III), IV) can be achieved by sufficient scaling and flattening of an arbitrary very generic $(i+1, n+1-i)$-cup-cap-saturated set. In particular, I) follows from the very generic property and \Cref{lem:rotate}. Finally, by picking $\delta$ small enough with respect to $\veps$, V) follows from II). We will refer to the property IV) as the \emph{height hierarchy} of $P$.

We want one more property which needs some explanation. Suppose we have a cup (cap) $C$ of size at least $3$ in some $P_i$. Consider where a point $x$ in the plane might lie such that $x$ extends $C$ to a bigger cup (cap) but $x$ is not the first or the last point in the extended cup (cap). Since the size of the cup (cap) is at least $3$, the possible positions of $x$ are bounded regions in the plane. Therefore, by further flattening the sets $P_1, \dots, P_{n-1}$ we may assume that this bounded region is in $\bB(p_i, \delta)$. 
\begin{enumerate}[label=\Roman*)]
    \item[VI)] For any $P_i$ and cup (cap) $C$ with $|C|\ge 3$ within $P_i$, all the points of the plane that extend $C$ not at the ends lie in $\bB(p_i, \delta)$. 
\end{enumerate}
According to \Cref{thm:sat_csmall}, $P_1$ and $P_{n-1}$ each contains a single point. Hence, we pick $P_1 = \{p_1\}$ and $P_{n-1} = \{p_{n-1}\}$ for convenience. Note that the angle between any vertical line and any $\lin(p_i, p_{i+1})$ is less than $\pi/3$. Set $P \eqdef \bigcup_{i=1}^{n-1} P_i$. Our aim is to show that $P$ is $n$-gon-saturated. 
    
For this purpose, we will need three different arguments based on the position of the point $q$. The first case is when $q$ is close to one of the $P_2, \dots, P_{n-2}$. Denote by $\bD_i \eqdef \bB(p_i, \veps)$ for $i = 2, \dots, n-2$. We will refer to them as \emph{disks}, and we have $n-3$ disks in total. Recall that a point $q$ is generic (with respect to $P$) if $P \cup \{q\}$ is generic, and we call $q$ \textit{good} if $q$ is part of an $n$-gon in $P \cup \{q\}$. 

\begin{proposition} \label{prop:disk}
    For $i = 2, 3, \dots, n-2$, every generic point $q \in \bD_i$ is good. 
\end{proposition}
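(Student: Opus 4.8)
The plan is to use the local saturation of $P_i$ to force, through the new point $q$, a monochromatic configuration sitting essentially on the group $P_i$ — either an $(i+1)$-cup or an $(n+1-i)$-cap through $q$ — and then to grow it into a convex $n$-gon by appending one point from each of the remaining groups on the appropriate side. Concretely: if $q$ forces an $(i+1)$-cup $C=\overline{r_1\cdots r_{i+1}}$ in $P_i\cup\{q\}$, I would pick one point $u_j\in P_j$ for each $i<j\le n-1$ and observe that $r_1,\dots,r_{i+1},u_{i+1},\dots,u_{n-1}$ is an $n$-cup, hence a convex $n$-gon: the slopes inside $C$ are tiny, the slope from the right end of $C$ to $u_{i+1}$ is of order $2i+1$, and the consecutive slopes $\slp(u_j,u_{j+1})$ keep increasing because, by the height hierarchy IV), any line through two points of an earlier group lies below every later group. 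If instead $q$ forces an $(n+1-i)$-cap $C=\overline{c_1\cdots c_{n+1-i}}$ with right end $t=c_{n+1-i}$, I would invoke \Cref{lem:cupcap_obv_combine} with the cap $C_+\eqdef\overline{p_1\,c_1\cdots c_{n+1-i}}$ — prepending the single point $p_1\in P_1$ inserts only one new slope, of order $i+1$, at the front, so $C_+$ stays a cap — and the cup $C_-\eqdef\overline{p_1\,u_2\cdots u_{i-1}\,t}$ with $u_j\in P_j$, whose slopes $3,5,\dots,2i-1$ strictly increase; since $C_-$ and $C_+$ meet exactly in $\{p_1,t\}$, together they form a convex polygon on $(i-2)+(n-i)+2=n$ vertices.

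It remains to secure the forced configuration in a \emph{usable} position. Because $P_i$ is $(i+1,n+1-i)$-cup-cap-saturated, adding $q$ to $P_i$ always produces an $(i+1)$-cup or an $(n+1-i)$-cap, necessarily through $q$. If $q\in\bB(p_i,\delta)$, every slope incident to $q$ inside $P_i\cup\{q\}$ is tiny and the argument above applies unchanged. If $q$ lies farther out — anywhere in $\bD_i=\bB(p_i,\veps)$, a disk much larger than the scale $\delta$ of $P_i$ — then property VI) forbids $q$ from extending a cup or cap of $P_i$ of size at least $3$ at an interior vertex, so $q$ is the leftmost or rightmost point of whatever it forces (note $i+1\ge 3$ always, and $n+1-i\ge 3$ since $i\le n-2$). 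The only problematic situation is $q$ sitting high and immediately to the right of $P_i$, so that the last edge of the forced cup is nearly vertical and cannot be continued to the right, or the mirror situation with $q$ low and immediately to the left (which spoils the cap). In those cases I would use property I): rotating $P_i$ about a centre near $q$ through an arbitrarily small angle (well below $\pi/3$) keeps $P_i\subset\bB(p_i,\delta)$ and $(i+1,n+1-i)$-cup-cap-saturated while moving $q$ to the opposite side of $P_i$; afterwards the bad edge becomes a \emph{leading} edge of a cup (or a \emph{trailing} edge of a cap), and a very negative first slope, respectively a very positive last slope, causes no harm, so the argument above again applies.

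The hard part, I expect, is exactly this position control. One has to check that for every generic $q\in\bD_i$ there is a rotation of angle below $\pi/3$, about a suitably chosen centre, that simultaneously brings $q$ to the ``good'' side of $P_i$ and leaves the height hierarchy IV), the disjointness of the disks $\bD_j$, and the remaining listed properties intact for the groups $P_{i+1},\dots,P_{n-1}$ consumed in the extension. Everything else — the slope comparisons and the existence of the points $u_j$ — is routine once the configuration is arranged this way.
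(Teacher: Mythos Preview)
Your plan --- force a cup or cap in $P_i\cup\{q\}$ via saturation, then graft on points from the remaining groups --- is natural, and you correctly isolate the obstruction: when $q$ sits, say, high and just to the right of $P_i$, the terminal edge of the forced cup is nearly vertical and cannot be continued towards $P_{i+1}$. The gap is in your fix. An ``arbitrarily small'' rotation cannot move $q$ to the opposite side of $P_i$: if $q$ lies directly to the right of $P_i$ at distance of order $\veps$, then any rotation of $P_i$ about a centre near $q$ that reverses the horizontal order of $q$ and $P_i$ must have angle exceeding $\pi/2$, well beyond the $\pi/3$ margin granted by property~I). Indeed the two requirements you list --- keeping the rotated $P_i$ inside $\bB(p_i,\delta)$ and flipping $q$ to the other side --- are mutually incompatible for such $q$. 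So the ``position control'' you flag as the hard part is not achievable in this framework.

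The paper proceeds differently. It first splits off an easy case: with $a\in P_{i-1}$ and $b\in P_{i+1}$ chosen as tangent points from $q$, the two lines $\lin(a,q)$ and $\lin(b,q)$ cut $\bD_i$ into four sectors, and if any $p\in P_i$ falls in either of two opposite sectors then $a,p,q,b$ together with one point from each remaining group already form an $n$-gon. In the remaining case (all of $P_i$ confined to the other two sectors) the paper rotates $P_{i-1},P_i,P_{i+1}$ together by an angle below $\pi/3$ so that $q$ ends up to the right of $P_{i-1}'$ and of $P_{i+1}'$ --- not of $P_i'$. This is feasible because $p_{i\pm1}$ lie at directions close to $\pm\pi/2$ from $q$, so a modest rotation suffices. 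The payoff is that every line $\lin(q,p)$ with $p\in P_i'$ now passes above $P_{i-1}'$ and below $P_{i+1}'$; hence the forced cap (say) in $P_i'\cup\{q\}$ may be combined via \Cref{lem:cupcap_combine} with a single full $(i{-}1)$-cup sitting inside $P_{i-1}'$ to produce the $n$-gon. The steep edge through $q$ is harmless for that lemma, which only asks that lines from one piece lie above or below the other, not that slopes increase across the join --- and this is exactly what your one-point-per-group extension cannot avoid demanding.
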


\begin{proof}
    Let $a\in P_{i-1}$ be the counterclockwise next vertex on the boundary of $\conv(P_{i-1} \cup \{q\})$ after $q$, and $b\in P_{i+1}$ be the clockwise next vertex on the boundary of $\conv(P_{i+1} \cup \{q\})$ after $q$. Then $\ell_-\eqdef \lin(a, q)$ and $\ell_+\eqdef \lin(b, q)$ partition the disk $\bD_i$ into four open regions, denote them as $\mathcal{R}_1, \mathcal{R}_2, \mathcal{R}_3$, and $\mathcal{R}_4$ according to  \Cref{fig:di_case}. If some point $p \in P_i$ is in  $\mathcal{R}_1 \cup \mathcal{R}_3$, then any sequence of points $q_i\in P_i$ for $i \in \{1,2,\dots,i-2,i+2,i+3,\dots,n-1\}$ together with $a, p, q, b$ form a convex $n$-gon. To see this note that if $\veps$ is small enough, so $q$ is very close to $p$ and therefore we only need that $a$ and $b$ lie on the same side of $\lin(p,q)$. This holds exactly when $p$ is in $\mathcal{R}_1 \cup \mathcal{R}_3$. 

    Assume that $P_i$ lies entirely in $\mathcal{R}_2 \cup \mathcal{R}_4$ then. In this case, we are going to find an $n$-gon from $P_{i-1} \cup P_i \cup P_{i+1} \cup \{q\}$ using the saturation properties of these sets. The idea is simple, $q$ is part of either a large cup or a large cap in $P_i$, and we try to combine that with either a cup from $P_{i-1}$ or a cap from $P_{i+1}$. To make the idea work, we start with a careful rotation of the point set $P$. 
    
	Let $\tau$ denote a rotation around $q$ with angle $\theta$. For any $S\subset \mathbb{R}^2$, denote by $S'$ the image of $S$ under $\tau$. We will show that $\theta$ can be chosen such that the following properties hold: 
    \begin{enumerate}[label=(\alph*)]
        \item $|\theta| < \frac{\pi}{3}$. 
        \item  $x(q) = x(\tau(q)) > x(\tau(p))$ holds for all $p \in P_{i-1} \cup P_{i+1}$.
        \item $P_{i-1}', P_i', P_{i+1}'$ obey the same height hierarchy as $P_{i-1}, P_i, P_{i+1}$.
        \item $P' \cup \{q\}$ is generic.
    \end{enumerate}
    Since $\veps$ is sufficiently small, our definition of $P_i$ implies that $\theta$ can be chosen to meet (a) and (b). (c) is ensured by III), (a), and $\veps$ being sufficiently small, as no line from those sets passes through a vertical state during the rotation. Finally, (d) is easy as it only forbids finitely many values for $\theta$. From (b) it follows that for any point $p \in \mathcal{R}_2 \cup \mathcal{R}_4$, in particular for any $p\in P_i'$, $\lin(q, p)$ is above $P_{i-1}'$ and below $P_{i+1}'$. This is the crucial advantage of applying the rotation $\tau$ (see the second part of \Cref{fig:di_case}).

    From (a) and I) we deduce that $P_i'$ is $(i+1, n+1-i)$-cup-cap-saturated, hence (d) implies that the point $q = q'$ is part of some $(i+1)$-cup or $(n+1-i)$-cap in $P_i' \cup \{q\}$.  Assume without loss of generality that $C_+$ is such a cap. Since $P_{i-1}'$ is $(i, n+2-i)$-cup-cap-saturated by I), from \Cref{prop:cc_satexist} it follows that there exists an $(i-1)$-cup $C_-$ in $P_{i-1}'$. We claim that $C_+$ and $C_-$ together form a convex $n$-gon. From \Cref{lem:cupcap_combine} we know that it is enough to check that all lines spanned by $C_+$ run above $C_-$ and all lines spanned by $C_-$ run below $C_+$. This follows from (c) for almost all the cases, except for the lines $\lin(q, p)$ with $p\in C_+$. We have seen that  $\lin(q, p)$ is above $P_{i-1}'$ for all $p\in P_{i}'$, hence these cases are also satisfied. We conclude that $C_+ \cup C_-$ forms a convex $n$-gon, and the proof is complete. 
    \end{proof}

    \begin{figure}[!ht]
        \centering
            \definecolor{qqqqff}{rgb}{0.,0.1,0.9}
    \definecolor{wqwqwq}{rgb}{0.3764705882352941,0.3764705882352941,0.3764705882352941}
    \definecolor{ffqqqq}{rgb}{0.9,0.1,0.}
    \scalebox{0.75}{
    \begin{tikzpicture}[line cap=round,line join=round,>=triangle 45,x=0.4cm,y=0.4cm]
\clip(-0.9895018390923995,-0.6817188017059387) rectangle (44.97616222827384,20.61786356190461);
\fill[line width=1.pt,fill=black,fill opacity=0.25] (11.637871192185218,5.9862507249539965) -- (13.952639636232503,6.480335382701773) -- (11.884404398257125,8.340288527726653) -- cycle;
\fill[line width=1.pt,fill=black,fill opacity=0.25] (11.637871192185218,5.9862507249539965) -- (9.32310274813793,5.492166067206221) -- (11.391337986113312,3.632212922181339) -- cycle;
\fill[line width=1.pt,fill=black,fill opacity=0.25] (34.40538072859401,9.097796602360908) -- (35.663660795491474,11.102541940758616) -- (32.88845871131448,10.914722630594149) -- cycle;
\fill[line width=1.pt,fill=black,fill opacity=0.25] (34.40538072859401,9.097796602360908) -- (33.14710066169654,7.093051263963204) -- (35.92230274587354,7.280870574127668) -- cycle;

\draw [line width=1.pt] (11.637871192185218,5.9862507249539965) circle (0.4*2.3669120386230738cm);

\draw [line width=1.5pt] (3.2419511658758164,2.8774550280885647)-- (3.3124743013146416,4.209204349529952);
\draw [line width=1.5pt] (3.3124743013146416,4.209204349529952)-- (4.59886321327134,3.2614086239863944);
\draw [line width=1.5pt] (4.59886321327134,3.2614086239863944)-- (5.272034210585329,1.7712421115103263);
\draw [line width=1.5pt] (5.272034210585329,1.7712421115103263)-- (4.105040364524771,1.6132116948562922);
\draw [line width=1.5pt] (4.105040364524771,1.6132116948562922)-- (3.2419511658758164,2.8774550280885647);
\draw [line width=1.5pt] (12.621966476088765,15.382946353710604)-- (14.295285817022604,16.138731147102902);
\draw [line width=1.5pt] (14.295285817022604,16.138731147102902)-- (15.942489364898947,16.158340713149045);
\draw [line width=1.5pt] (15.942489364898947,16.158340713149045)-- (16.393509383960325,15.177862410841698);
\draw [line width=1.5pt] (16.393509383960325,15.177862410841698)-- (15.491469345837567,15.09942414665711);
\draw [line width=1.5pt] (15.491469345837567,15.09942414665711)-- (12.621966476088765,15.382946353710604);
\draw (8.5,4.8) node[anchor=north west] {$\mathcal{R}_1$};
\draw (13.3,4.7) node[anchor=north west] {$\mathcal{R}_2$};
\draw (13.2,9.2) node[anchor=north west] {$\mathcal{R}_3$};
\draw (8.4,9.2) node[anchor=north west] {$\mathcal{R}_4$};
\draw [shift={(11.637871192185218,5.9862507249539965)},line width=1.pt,fill=black,fill opacity=0.25]  plot[domain=0.21029307036080222:1.4664489060205446,variable=\t]({1.*2.366912038623072*cos(\t r)+0.*2.366912038623072*sin(\t r)},{0.*2.366912038623072*cos(\t r)+1.*2.366912038623072*sin(\t r)});
\draw [shift={(11.637871192185218,5.9862507249539965)},line width=1.pt,fill=black,fill opacity=0.25]  plot[domain=3.351885723950595:4.608041559610338,variable=\t]({1.*2.3669120386230733*cos(\t r)+0.*2.3669120386230733*sin(\t r)},{0.*2.3669120386230733*cos(\t r)+1.*2.3669120386230733*sin(\t r)});
\draw (12.0,7.9) node[anchor=north west] {$p$};
\draw (15.195591142374587,17.963508312944025) node[anchor=north west] {$P_{i+1}$};
\draw (11.3,16.2) node[anchor=north west] {$b$};
\draw (2.9,1.7) node[anchor=north west] {$P_{i-1}$};
\draw (2.7,5.5) node[anchor=north west] {$a$};
\draw [line width=1.5pt] (11.637871192185218,5.9862507249539965)-- (13.952639636232503,6.480335382701773);

\draw [line width=1.pt] (34.40538072859401,9.097796602360908) circle (0.4*2.3669120386230738cm);

\draw [shift={(34.40538072859401,9.097796602360908)},line width=1.pt,fill=black,fill opacity=0.25]  plot[domain=0.21029307036080222:1.4664489060205446,variable=\t]({0.6967067093471654*2.366912038623072*cos(\t r)+-0.7173560908995228*2.366912038623072*sin(\t r)},{0.7173560908995228*2.366912038623072*cos(\t r)+0.6967067093471654*2.366912038623072*sin(\t r)});
\draw [shift={(34.40538072859401,9.097796602360908)},line width=1.pt,fill=black,fill opacity=0.25]  plot[domain=3.351885723950595:4.608041559610338,variable=\t]({0.6967067093471654*2.3669120386230733*cos(\t r)+-0.7173560908995228*2.3669120386230733*sin(\t r)},{0.7173560908995228*2.3669120386230733*cos(\t r)+0.6967067093471654*2.3669120386230733*sin(\t r)});
\draw [line width=1.5pt] (30.786000443630662,0.909013412786841)-- (29.879795897967803,1.887442300959961);
\draw [line width=1.5pt] (29.879795897967803,1.887442300959961)-- (31.455938720405037,2.149905581140942);
\draw [line width=1.5pt] (31.455938720405037,2.149905581140942)-- (32.993921494950825,1.5945998889944946);
\draw [line width=1.5pt] (32.993921494950825,1.5945998889944946)-- (32.29423313456775,0.6473488939169059);
\draw [line width=1.5pt] (32.29423313456775,0.6473488939169059)-- (30.786000443630662,0.909013412786841);
\draw [line width=1.5pt] (28.350229671909037,16.350484238542524)-- (28.973875658668817,18.077410396160364);
\draw [line width=1.5pt] (28.973875658668817,18.077410396160364)-- (30.107426380491624,19.272704010412497);
\draw [line width=1.5pt] (30.107426380491624,19.272704010412497)-- (31.12500713597657,18.913140156616947);
\draw [line width=1.5pt] (31.12500713597657,18.913140156616947)-- (30.552817955889072,18.2114077761074);
\draw [line width=1.5pt] (30.552817955889072,18.2114077761074)-- (28.350229671909037,16.350484238542524);
\draw [line width=1.5pt] (0.,20.)-- (20.,20.);
\draw [line width=1.5pt] (20.,0.)-- (0.,0.);
\draw [line width=1.5pt] (0.,0.)-- (0.,20.);
\draw [line width=1.5pt] (20.,0.)-- (20.,20.);
\draw [line width=1.5pt] (24.,20.)-- (44.,20.);
\draw [line width=1.5pt] (44.,20.)-- (44.,0.);
\draw [line width=1.5pt] (44.,0.)-- (24.,0.);
\draw [line width=1.5pt] (24.,0.)-- (24.,20.);
\draw [line width=1.5pt] (13.105500358461022,20.)-- (11.010944308583378,0.);
\draw [line width=1.5pt] (0.,3.5021605259146025)-- (20.,7.7711374857342825);
\draw [line width=1.5pt] (28.69514117937138,0.)-- (41.248157692712205,20.);
\draw [line width=1.5pt] (25.3033082645765,20.)-- (42.00098327949795,0.);
\draw (30.2,10.1) node[anchor=north west] {$\mathcal{R}_4'$};
\draw (36.8,10.1) node[anchor=north west] {$\mathcal{R}_2'$};

\draw [line width=1.5pt,color=ffqqqq] (30.382330198435916,1.6711134118554085)-- (31.005449093595857,1.1873104074623768);
\draw [line width=1.5pt,color=ffqqqq] (31.005449093595857,1.1873104074623768)-- (31.764051104922263,1.0436486947558024);
\draw [line width=1.5pt,color=ffqqqq] (32.51512258462475,1.4156752901214347)-- (31.764051104922263,1.0436486947558024);
\draw [line width=1.5pt,color=qqqqff] (33.065266161798334,9.019928754058189)-- (34.40538072859401,9.097796602360908);
\draw [line width=1.5pt,color=qqqqff] (34.40538072859401,9.097796602360908)-- (35.505554001727575,8.85585159954342);
\draw [line width=1.5pt,color=qqqqff] (35.505554001727575,8.85585159954342)-- (36.06048955744505,8.376266723663228);
\draw (11.7,6.0) node[anchor=north west] {$q$};
\draw (33.8,10.6) node[anchor=north west] {$q$};
\draw (33.9,6.6) node[anchor=north west] {$\mathcal{R}_1'$};
\draw (33.9,13.0) node[anchor=north west] {$\mathcal{R}_3'$};
\draw (31.0,18.805133147980307) node[anchor=north west] {$P_{i+1}'$};
\draw (33.25815490969175,2.49055942266159) node[anchor=north west] {$P_{i-1}'$};
\begin{scriptsize}
\draw [fill=black] (12.621966476088765,15.382946353710604) circle (1.5pt);
\draw [fill=black] (14.295285817022604,16.138731147102902) circle (1.5pt);
\draw [fill=black] (15.942489364898947,16.158340713149045) circle (1.5pt);
\draw [fill=black] (16.393509383960325,15.177862410841698) circle (1.5pt);
\draw [fill=black] (15.491469345837567,15.09942414665711) circle (1.5pt);
\draw [fill=black] (3.2419511658758164,2.8774550280885647) circle (1.5pt);
\draw [fill=black] (4.105040364524771,1.6132116948562922) circle (1.5pt);
\draw [fill=black] (5.272034210585329,1.7712421115103263) circle (1.5pt);
\draw [fill=black] (4.59886321327134,3.2614086239863944) circle (1.5pt);
\draw [fill=black] (3.3124743013146416,4.209204349529952) circle (1.5pt);
\draw [fill=black] (11.637871192185218,5.9862507249539965) circle (1.5pt);
\draw [fill=black] (11.98998531748832,6.471304997014045) circle (1.5pt);
\draw [fill=black] (3.5074084738911813,3.6979905197384597) circle (1.5pt);
\draw [fill=black] (3.5944805568732767,2.9139235857779573) circle (1.5pt);
\draw [fill=black] (4.810099146519692,1.990053457646681) circle (1.5pt);
\draw [fill=black] (4.019947063249522,2.2696457332653566) circle (1.5pt);
\draw [fill=black] (14.170228946790157,15.716083327111557) circle (1.5pt);
\draw [fill=black] (14.736321913653445,15.691822199960272) circle (1.5pt);
\draw [fill=black] (15.528852067262049,15.627125860890184) circle (1.5pt);
\draw [fill=black] (34.40538072859401,9.097796602360908) circle (1.5pt);
\draw [fill=black] (34.40538072859401,9.097796602360908) circle (1.5pt);
\draw [fill=black] (30.786000443630662,0.909013412786841) circle (1.5pt);
\draw [fill=black] (29.879795897967803,1.887442300959961) circle (1.5pt);
\draw [fill=black] (29.879795897967803,1.887442300959961) circle (1.5pt);
\draw [fill=black] (31.455938720405037,2.149905581140942) circle (1.5pt);
\draw [fill=black] (31.455938720405037,2.149905581140942) circle (1.5pt);
\draw [fill=black] (32.993921494950825,1.5945998889944946) circle (1.5pt);
\draw [fill=black] (32.993921494950825,1.5945998889944946) circle (1.5pt);
\draw [fill=black] (32.29423313456775,0.6473488939169059) circle (1.5pt);
\draw [fill=black] (32.29423313456775,0.6473488939169059) circle (1.5pt);
\draw [fill=black] (30.786000443630662,0.909013412786841) circle (1.5pt);
\draw [fill=black] (28.350229671909037,16.350484238542524) circle (1.5pt);
\draw [fill=black] (28.973875658668817,18.077410396160364) circle (1.5pt);
\draw [fill=black] (28.973875658668817,18.077410396160364) circle (1.5pt);
\draw [fill=black] (30.107426380491624,19.272704010412497) circle (1.5pt);
\draw [fill=black] (30.107426380491624,19.272704010412497) circle (1.5pt);
\draw [fill=black] (31.12500713597657,18.913140156616947) circle (1.5pt);
\draw [fill=black] (31.12500713597657,18.913140156616947) circle (1.5pt);
\draw [fill=black] (30.552817955889072,18.2114077761074) circle (1.5pt);
\draw [fill=black] (30.552817955889072,18.2114077761074) circle (1.5pt);
\draw [fill=black] (28.350229671909037,16.350484238542524) circle (1.5pt);
\draw [fill=black] (30.382330198435916,1.6711134118554085) circle (1.5pt);
\draw [fill=black] (31.005449093595857,1.1873104074623768) circle (1.5pt);
\draw [fill=black] (32.51512258462475,1.4156752901214347) circle (1.5pt);
\draw [fill=black] (31.764051104922263,1.0436486947558024) circle (1.5pt);
\draw [fill=black] (29.18993668610411,17.69323851671136) circle (1.5pt);
\draw [fill=black] (29.60174132156607,18.082425864443497) circle (1.5pt);
\draw [fill=black] (30.200312709835952,18.605877823855835) circle (1.5pt);
\draw [fill=black] (33.065266161798334,9.019928754058189) circle (1.5pt);
\draw [fill=black] (35.505554001727575,8.85585159954342) circle (1.5pt);
\draw [fill=black] (36.06048955744505,8.376266723663228) circle (1.5pt);
\end{scriptsize}
\end{tikzpicture}
    }
        \caption{$P_{i-1}$, $P_i$ and $P_{i+1}$ before and after the rotation.}
        \label{fig:di_case}
    \end{figure}
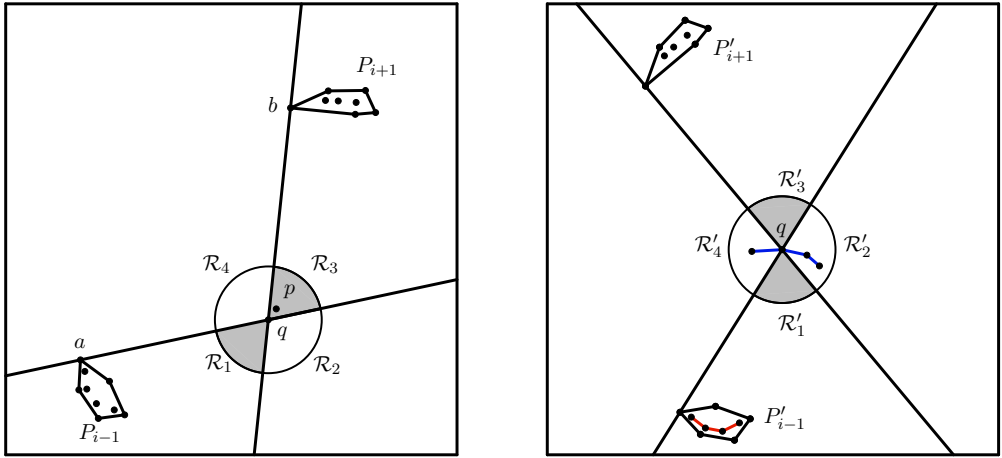

The next case is when the new point $q$ lies roughly between $P_i$ and $P_{i+1}$. Define 
\[
\bT_i \eqdef \conv(P_i \cup P_{i+1}) \setminus (\bD_i \cup \bD_{i+1}) 
\]
for $i = 1, \dots, n-2$. These regions are called \emph{tubes}. However, we need two more unbounded tube regions for technical reasons. From \Cref{thm:sat_csmall} we know that the set $P_2$ itself is an $(n-2)$-cap, and the set $P_{n-2}$ itself is an $(n-2)$-cup. Suppose $P_2$ is $\overline{p_{2}^1 \dotsb p_{2}^{n-2}}$, $P_{n-2}$ is $\overline{p_{n-2}^1 \dotsb p_{n-2}^{n-2}}$, and let 
\begin{itemize}
    \item $\bT_0$ denote the region enclosed by $\ray(p_2^1, p_1)$, $\ray(p_2^{n-2}, p_1)$  with apex $p_1$, 
    \item $\bT_{n-1}$ denote the region enclosed by $\ray(p_{n-2}^1, p_{n-1}), \ray(p_{n-2}^{n-2}, p_{n-1})$ with apex $p_{n-1}$. 
\end{itemize}
Note that we have defined $n$ \emph{tubes} $\bT_0, \dots, \bT_{n-1}$ in total.

The following observation follows from Carath\'eodory's theorem (Theorem~1.2.3~in~\cite{matousek2013lectures}).
    \begin{observation}\label{lem:tubetriangle}
    If $q \in \bT_i$ is a generic point where index $i \in \{1, \dots, n-2\}$, then there exists distinct $a, b, c \in P_i \cup P_{i+1}$, not all in $P_i$ or $P_{i+1}$, such that $q$ lies in the interior of $\conv(\{a, b, c\})$. 
\end{observation}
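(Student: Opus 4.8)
The plan is to apply Carath\'eodory's theorem to $q \in \conv(P_i \cup P_{i+1})$ and then eliminate the two degenerate possibilities (the Carath\'eodory witness being a point or a segment, or all three witness points lying in one of $P_i, P_{i+1}$) using the genericity of $q$ and the containment $\conv(P_j) \subset \bD_j$ coming from property~II).

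Concretely, since $q \in \bT_i \subseteq \conv(P_i \cup P_{i+1})$, Carath\'eodory's theorem yields a subset $T \subseteq P_i \cup P_{i+1}$ with $|T| \le 3$ and $q \in \conv(T)$. First I would rule out $|T| \le 2$: in that case $\conv(T)$ is a single point or a segment, so $q$ would coincide with a point of $P$ or be collinear with two points of $P$, both impossible because $q$ is generic with respect to $P$. Hence $|T| = 3$; writing $T = \{a, b, c\}$ with $a, b, c$ distinct, the same genericity argument shows that $q$ cannot lie on any of the three edges of $\conv(\{a, b, c\})$, so $q$ lies in the interior of this triangle.

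Next I would show $a, b, c$ are not all in $P_i$: if they were, then by property~II) we would have $\conv(\{a, b, c\}) \subseteq \conv(P_i) \subseteq \bB(p_i, \delta) \subseteq \bB(p_i, \veps) = \bD_i$, using $\delta < \veps$; this forces $q \in \bD_i$, contradicting $q \in \bT_i = \conv(P_i \cup P_{i+1}) \setminus (\bD_i \cup \bD_{i+1})$. Symmetrically, $a, b, c$ cannot all lie in $P_{i+1}$. This gives all the required conclusions.

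Since everything is elementary, there is no genuine obstacle; the only points requiring attention are invoking genericity twice (once to exclude $|T| \le 2$, once to place $q$ in the open triangle rather than on its boundary) and recalling from property~II) together with $\veps \gg \delta$ that $\conv(P_j)$ sits inside the disk $\bD_j$.
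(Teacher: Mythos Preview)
Your proof is correct and follows the same route as the paper, which simply cites Carath\'eodory's theorem without spelling out the details; you have supplied exactly those details (ruling out $|T|\le 2$ via genericity and ruling out $T\subseteq P_j$ via property~II and $\delta<\veps$). One tiny remark: at the endpoints $i=1$ and $i=n-2$ the disk $\bD_1$ (resp.\ $\bD_{n-1}$) is not defined, but since $P_1$ and $P_{n-1}$ are singletons the ``all three in one group'' case is vacuous there anyway.
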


\begin{proposition} \label{prop:tube}
    For $i = 0, 1, \dots, n-1$, every generic point $q \in \bT_i$ is good. 
\end{proposition}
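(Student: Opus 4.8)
The plan is to produce, for every generic $q\in\bT_i$ (for each $i\in\{0,1,\dots,n-1\}$), a single $n$-point \emph{cup} running along the whole construction, with $q$ inserted as one of its vertices; since a cup on $n$ points is automatically in convex position, this cup is an $n$-gon inside $P\cup\{q\}$, so $q$ is good.

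Consider first a bounded tube $\bT_i$ with $1\le i\le n-2$. Because $q\in\conv(P_i\cup P_{i+1})$ and $q\notin\bD_i\cup\bD_{i+1}$ while $P_i,P_{i+1}$ sit in $\delta$-balls with $\delta\ll\veps$, the $x$-coordinate of $q$ lies in the gap strictly between the two clusters, so the edge of the upper convex hull of $P_i\cup P_{i+1}$ lying above $x(q)$ is a \emph{crossing} edge $\overline{s_is_{i+1}}$ with $s_i\in P_i$, $s_{i+1}\in P_{i+1}$; genericity of $q$ puts $q$ strictly below $\lin(s_i,s_{i+1})$. Pick any $s_j\in P_j$ for the remaining indices (so $s_1=p_1$ and $s_{n-1}=p_{n-1}$ are forced) and form the chain
\[
    s_1,\ \dots,\ s_i,\ q,\ s_{i+1},\ \dots,\ s_{n-1}.
\]
I claim this is a cup, which reduces to checking that each non-endpoint vertex lies below the line through its two neighbours. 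For a vertex $s_j$ with $j\neq i$ this says $\slp(s_{j-1},s_j)<\slp(s_j,s_{j+1})$, which holds because consecutive spine slopes are of order $(j-1)+j$ and strictly increasing in $j$ --- exactly property IV) (the height hierarchy) together with III); for $s_i$ one needs $\slp(s_{i-1},s_i)<\slp(s_i,q)$ and for $s_{i+1}$ one needs $\slp(q,s_{i+1})<\slp(s_{i+1},s_{i+2})$, both of which follow since $q$ lies within $\delta$ of the segment $\overline{p_ip_{i+1}}$ at a parameter bounded away from its ends, so $\slp(s_i,q)$ and $\slp(q,s_{i+1})$ are within $O(\delta/\veps)$ of $\slp(p_i,p_{i+1})$ and thus slot between the spine slopes at levels $i-1$ and $i+1$; and finally the vertex $q$ needs $\slp(s_i,q)<\slp(q,s_{i+1})$, which is precisely ``$q$ lies below $\lin(s_i,s_{i+1})$'', arranged above. (Passing from these consecutive-triple conditions to full monotonicity of all chord slopes is the routine ``weighted average'' observation; one may alternatively package the final convex-position claim via \Cref{lem:cupcap_obv_combine,lem:cupcap_combine,lem:cupcap_combine_common}, with \Cref{lem:tubetriangle} supplying the sandwiching of $q$.)

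For the unbounded tubes the argument is the same in spirit, with $q$ appended as the new last (resp.\ first) vertex. Take $\bT_{n-1}$, where $P_{n-2}=\overline{p_{n-2}^1\dotsb p_{n-2}^{n-2}}$ and $P_{n-1}=\{p_{n-1}\}$: I claim $p_{n-2}^1,\dots,p_{n-2}^{n-2},p_{n-1},q$ is a cup. The only non-routine inequality is $\slp(p_{n-2}^{n-2},p_{n-1})<\slp(p_{n-1},q)$; since $P_{n-2}\cup\{p_{n-1}\}$ is itself a cup (by IV)), a cup's left endpoint has the larger slope to an upper-right point, so $\slp(p_{n-2}^{1},p_{n-1})>\slp(p_{n-2}^{n-2},p_{n-1})$, while the cone defining $\bT_{n-1}$ forces the direction from $p_{n-1}$ to $q$ to lie strictly between the directions from $p_{n-1}$ to $p_{n-2}^{n-2}$ and to $p_{n-2}^{1}$, whence $\slp(p_{n-1},q)>\slp(p_{n-2}^{n-2},p_{n-1})$ as needed (genericity excludes the boundary case). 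The tube $\bT_0$ is the mirror situation: use the chain $\overline{q\,p_1\,p_2^{1}\,s_3\dotsb s_{n-2}\,p_{n-1}}$, where the defining cone now yields $\slp(q,p_1)<\slp(p_1,p_2^{1})$, which is exactly the inequality making $p_1$ a legitimate valley between $q$ and $p_2^1$ while the remaining slopes $\slp(p_1,p_2^1),\slp(p_2^1,s_3),\dots$ increase by the height hierarchy.

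The routine but genuinely necessary part --- and the only real obstacle to a clean write-up --- is the $\veps$-versus-$\delta$ bookkeeping: one must make precise that $q\in\bT_i$ keeps $x(q)$ at distance $\gtrsim\veps/n$ from both clusters, so that the relevant upper-hull edge really is a crossing edge and all the $O(\delta/\veps)$ slope errors stay well below the gap of size $\approx 2$ between consecutive spine slopes, and that the spine chains $s_1,\dots,s_i$ are honest cups, which is where properties III) and IV) enter. None of this is deep; once the constants $\veps\gg\delta$ are chosen small enough (depending on $n$), every inequality above holds with room to spare.
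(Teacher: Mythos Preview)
Your bounded-tube argument ($1\le i\le n-2$) is a legitimate alternative to the paper's: where the paper invokes Carath\'eodory (\Cref{lem:tubetriangle}) to trap $q$ in a triangle and then uses property~V) to get $\slp(s_{i-1},s_i)<\slp(s_i,q)$, you instead use that $q$ lies in the $\delta$-sausage around $\overline{p_ip_{i+1}}$ at horizontal distance $\gtrsim\veps/n$ from either cluster, so all the intermediate slopes are within $O(\delta n/\veps)$ of $2i+1$ and slot into the gap-$2$ spine. This bypasses property~V) entirely, at the cost of slightly more explicit $\veps$--$\delta$ bookkeeping (your $O(\delta/\veps)$ should really be $O(\delta n/\veps)$, but that is harmless).

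However, your $\bT_{n-1}$ argument contains a genuine error. You claim $p_{n-2}^1,\dots,p_{n-2}^{n-2},p_{n-1},q$ is a cup, and justify the last link via ``a cup's left endpoint has the larger slope to an upper-right point, so $\slp(p_{n-2}^{1},p_{n-1})>\slp(p_{n-2}^{n-2},p_{n-1})$''. This inequality is backwards: in any cup $a_1,\dots,a_k$ the secant slopes $\slp(a_j,a_k)$ are \emph{increasing} in $j$ (convexity), so in fact $\slp(p_{n-2}^{1},p_{n-1})<\slp(p_{n-2}^{n-2},p_{n-1})$. The cone defining $\bT_{n-1}$ then only gives
\[
\slp(p_{n-2}^{1},p_{n-1})\ <\ \slp(p_{n-1},q)\ <\ \slp(p_{n-2}^{n-2},p_{n-1}),
\]
so $p_{n-2}^{n-2},p_{n-1},q$ is a \emph{cap}, not a cup, and your chain fails whenever $q$ is near the $\ray(p_{n-2}^1,p_{n-1})$ boundary. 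The fix is exactly the mirror of your (correct) $\bT_0$ argument: take only the single point $p_{n-2}^1$ from $P_{n-2}$, together with $p_{n-1}$, $q$, and one point from each of $P_1,\dots,P_{n-3}$; the displayed inequality above then supplies precisely the needed $\slp(p_{n-2}^{1},p_{n-1})<\slp(p_{n-1},q)$. This is what the paper does.
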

\begin{proof}
    We establish the existence of $q_1 \in P_1, \dots, q_{n-1} \in P_{n-1}$ such that $\overline{q_1 \dotsb q_i q q_{i+1} \dotsb q_{n-1}}$ gives an $n$-cup and hence a convex $n$-gon. Notice that since $\veps \ll 1$, for any selection $q_i\in P_i$ we have
    \begin{equation} \label{eq:mono_slope_global}
    \slp(q_1, q_2) < \dots < \slp(q_{n-2}, q_{n-1}). 
    \end{equation}
    That is, $\overline{q_1\dotsb q_{n-1}}$ is a cup. The boundary cases $i = 0$ or $n-1$ can be handled easily: 
    \begin{itemize}
        \item If $q \in \bT_0$, then \eqref{eq:mono_slope_global} implies that $q, p_1, p_2^1$ and any $q_k \in P_k \, (k = 3, \dots, n-1)$ form an $n$-cup. 
        \item If $q \in \bT_{n-1}$, then similarly, $q, p_{n-1}, p_{n-2}^{1}$ and any $q_k \in P_k \, (k = 1, \dots, n-3)$ form an $n$-cup. 
    \end{itemize}
    
    For $i\in \{1, \dots, n-2\}$, it is enough to pick $q_1, \dots, q_{n-1}$ such that 
    \[
    \slp(q_{i-1}, q_{i}) < \slp(q_i, q) < \slp(q, q_{i+1}) < \slp(q_{i+1}, q_{i+2}), 
    \]
    and so $\overline{q_1 \dotsb q_i q q_{i+1} \dotsb q_{n-1}}$ gives an $n$-cup. For the $i=1$ or the $i=n-2$ case, the inequality $\slp(q_{i-1}, q_{i}) < \slp(q_i, q)$ or the inequality $\slp(q, q_{i+1}) < \slp(q_{i+1}, q_{i+2})$ should be omitted, respectively. By \Cref{lem:tubetriangle}, we may assume without loss of generality that $q \in \conv(\{a, b, c\})$ with $a, b \in P_i$ and $c \in P_{i+1}$ satisfying $x(a) < x(b)$. See \Cref{fig:i_am_tired} and observe that
    \begin{equation} \label{eq:mono_slope_local}
        \slp(a, q) < \slp(a, c) < \slp(q, c) < \slp(b, c). 
    \end{equation}
    
    Given $i<n-2$ and $\varepsilon \ll 1$, we have $\slp(b, c) < \slp(c, d)$ for any $d \in P_{i+2}$. We also observe that any $e \in P_{i-1}$ satisfies $\slp(e, a) < \slp(a, q)$ if $i>1$. Indeed, let $x$ be the intersection of $\lin(e,a)$ and $\lin(c,b)$. From property V) of our construction, it follows that $x$ is in $\bD_i$, and so $\conv(\{a,b,x\}) \subseteq \bD_i$. Then $q \notin \bD_i$ implies that $\slp(e, a) < \slp(a, q)$. These inequalities together with \eqref{eq:mono_slope_global} and \eqref{eq:mono_slope_local} tells us that if we choose $q_i \eqdef a, \, q_{i+1} \eqdef c$ and all other $q_1, \dots, q_{i-1}, q_{i+2}, \dots, q_{n-1}$ arbitrarily, then  $\overline{q_1\dotsb q_i q q_{i+1}\dotsb q_{n-1}}$ is a cup.
\end{proof}
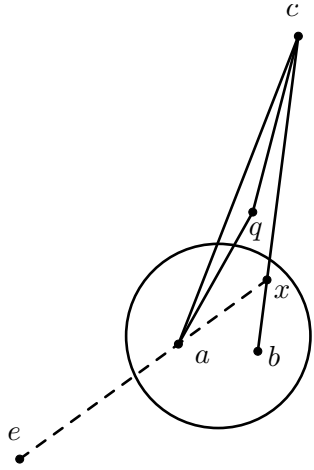
\begin{figure}[!ht]
        \centering
\scalebox{1.0}
{
       \begin{tikzpicture}[line cap=round,line join=round,>=triangle 45,x=0.3cm,y=0.3cm]
\clip(-6.000014267520551,-4.901672833292943) rectangle (11.170597306567084,16.19685642114154);
\draw [line width=1.pt] (2.8251865946115395,1.1404640343236605)-- (8.14048938172809,14.794028678160519);
\draw [line width=1.pt] (8.14048938172809,14.794028678160519)-- (6.3448698707123885,0.8218643580695969);
\draw [line width=1.pt] (8.14048938172809,14.794028678160519)-- (6.120417431835426,6.994306427186068);
\draw [line width=1.pt] (4.591565785670384,1.5016965582284665) circle (0.3*4.084143471790966cm);
\draw (-5.2,-2.1) node[anchor=north west] {$e$};
\draw (6.6,4.2) node[anchor=north west] {$x$};
\draw (3.1,1.3) node[anchor=north west] {$a$};
\draw (6.3448698707123885,1.5) node[anchor=north west] {$b$};
\draw [line width=1.pt] (2.8251865946115395,1.1404640343236605)-- (6.120417431835426,6.994306427186068);
\draw (5.5,7.0) node[anchor=north west] {$q$};
\draw (7.130453406781758,16.701874408614707) node[anchor=north west] {$c$};
\draw [line width=1.pt,dash pattern=on 4pt off 4pt] (-4.2076467020466675,-3.9596822495124497)-- (6.7517601307686315,3.9879791941322424);
\begin{scriptsize}
\draw [fill=black] (2.8251865946115395,1.1404640343236605) circle (1.5pt);
\draw [fill=black] (8.14048938172809,14.794028678160519) circle (1.5pt);
\draw [fill=black] (6.3448698707123885,0.8218643580695969) circle (1.5pt);
\draw [fill=black] (6.120417431835426,6.994306427186068) circle (1.5pt);
\draw [fill=black] (-4.2076467020466675,-3.9596822495124497) circle (1.5pt);
\draw [fill=black] (6.7517601307686315,3.9879791941322424) circle (1.5pt);
\end{scriptsize}
\end{tikzpicture}
}
        \caption{Points around $q$ when it lies in a tube.}
        \label{fig:i_am_tired}
\end{figure}

Our third case deals with any new point that is not in disks or tubes. Recall that $P_2$ is an $(n-2)$-cap $\overline{p_{2}^1 \dotsb p_{2}^{n-2}}$ and $P_{n-2}$ is an $(n-2)$-cup $\overline{p_{n-2}^1\dotsb p_{n-2}^{n-2}}$. For indices $i=2$ and $n-2$, we define $\overleftrightarrow{C_i}$ as the piecewise linear curve that consists of $\ray(p^2_i, p^1_i)$, the line segment $p^2_ip^{n-3}_i$, and $ \ray(p^{n-3}_i, p^{n-2}_i)$. Denote by $\bO$ the points of the plane that lie outside all disks $\bD \eqdef \bigcup_{i=2}^{n-2} \bD_i$ and all tubes $\bigcup_{i=0}^{n-1} \bT_i$. Then $\overleftrightarrow{C_2}$ and $\overleftrightarrow{C_{n-2}}$ partition $\bO$ into six connected regions $\oul$, $\our$, $\odl$, $\odr$, $\oll$, $\orr$, where the indices suggest their relative positions (such as upper-left, lower-right, and so on). \Cref{fig:blabla} illustrates the $n = 5$ case. 

\begin{figure}
    \centering
    \scalebox{0.8}{

\begin{tikzpicture}[line cap=round,line join=round,>=triangle 45,x=1.5cm,y=0.5cm]
\clip(-1.310499097057406,-2.731744975674936) rectangle (5.869601423777687,18.80855658683035);
\draw [line width=0.8pt,domain=-1.310499097057406:2.0] plot(\x,{(-0.3-0.05*\x)/-0.1});
\draw [line width=0.8pt,domain=2.0:5.869601423777687] plot(\x,{(--0.5-0.05*\x)/0.1});
\draw [line width=0.8pt,domain=-1.310499097057406:3.0] plot(\x,{(-1.05--0.05*\x)/-0.1});
\draw [line width=0.8pt,domain=3.0:5.869601423777687] plot(\x,{(--0.75--0.05*\x)/0.1});
\draw [line width=0.8pt,domain=2.9:5.869601423777687] plot(\x,{(-10.2--6.95*\x)/1.1});
\draw [line width=0.8pt,domain=3.1:5.869601423777687] plot(\x,{(-13.4--6.95*\x)/0.9});
\draw [line width=0.8pt,domain=-1.310499097057406:1.9] plot(\x,{(--2.05-2.95*\x)/-0.9});
\draw [line width=0.8pt,domain=-1.310499097057406:2.1] plot(\x,{(--1.85-2.95*\x)/-1.1});
\draw [rotate around={90.:(2.,4.)},line width=0.8pt] (2.,4.) ellipse (0.375cm and 0.375cm);
\draw [rotate around={90.:(3.,9.)},line width=0.8pt] (3.,9.) ellipse (0.375cm and 0.375cm);
\draw [line width=0.8pt] (2.9,9.05)-- (1.9,3.95);
\draw [line width=0.8pt] (3.1,9.05)-- (2.1,3.95);
\draw [fill=black] (1.,1.) circle (1.0pt);
\draw[color=black] (1.1998019425625505,0.7228316900098742) node {$P_1$};
\draw [fill=black] (2.,4.) circle (1.0pt);
\draw[color=black] (2.441643227612645,4.583829139892897) node {$P_2$};
\draw [fill=black] (1.9,3.95) circle (1.0pt);
\draw [fill=black] (2.1,3.95) circle (1.0pt);
\draw [fill=black] (3.,9.) circle (1.0pt);
\draw[color=black] (3.412537323197265,10.002772929202404) node {$P_3$};
\draw [fill=black] (2.9,9.05) circle (1.0pt);
\draw [fill=black] (3.1,9.05) circle (1.0pt);
\draw [fill=black] (4.,16.) circle (1.0pt);
\draw[color=black] (4.202799959138234,15.794269104026938) node {$P_4$};
\draw[color=black] (-0.5,16) node {$\oul$};
\draw[color=black] (5,16) node {$\our$};
\draw[color=black] (5,7) node {$\orr$};
\draw[color=black] (-0.5,7) node {$\oll$};
\draw[color=black] (5,0.25) node {$\odr$};
\draw[color=black] (-0.5,0.25) node {$\odl$};
\end{tikzpicture}
}
    \caption{Regions around the construction.}
    \label{fig:blabla}
\end{figure}
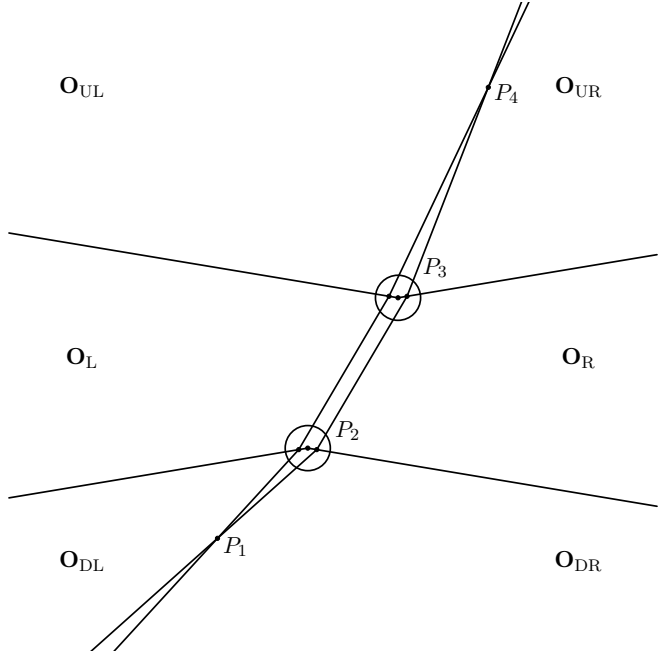

\begin{proposition} \label{prop:outer}
    Every generic point of $\bO$ is good. 
\end{proposition}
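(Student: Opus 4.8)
The plan is to treat the six regions separately, exhibiting in each case an explicit convex $n$-gon inside $P\cup\{q\}$ built from the large cup/cap structures that our construction carries. Two global structures are central. Appending $p_{n-1}$ to the $(n-2)$-cup $P_{n-2}$ gives an $(n-1)$-cup $K'=p_{n-2}^1,\dots,p_{n-2}^{n-2},p_{n-1}$: it is a cup because the last slope $\slp(p_{n-2}^{n-2},p_{n-1})$ dominates the within-$P_{n-2}$ slopes, which are $<\veps$ by III). Symmetrically, prepending $p_1$ to the $(n-2)$-cap $P_2$ gives an $(n-1)$-cap $K=p_1,p_2^1,\dots,p_2^{n-2}$, using the height hierarchy IV) and III). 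More generally, for any index $1\le j<n$, \Cref{prop:cc_satexist} yields a $j$-cup inside $P_j$; concatenating it with one representative from each of $P_{j+1},\dots,P_{n-1}$ and using III)–IV) to control the connecting slopes produces an $(n-1)$-cup of $P$, and dually an $(n-1)$-cap. These are the building blocks; in each case we insert $q$ into one of them.

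\textbf{The corner regions.} I would first dispose of $\oul,\our,\odl,\odr$, each essentially by a single slope comparison. For $q\in\oul$: "above the left ray $\ray(p_{n-2}^2,p_{n-2}^1)$ of $\overleftrightarrow{C_{n-2}}$" unwinds to $\slp(q,p_{n-2}^1)<\slp(p_{n-2}^1,p_{n-2}^2)$, so $q$ prepends to $K'$ and we obtain an $n$-cup; if instead $q$ lies to the right of $p_{n-2}^1$, then $q\notin\bT_{n-1}$ forces $q$ above $\ray(p_{n-2}^{n-2},p_{n-1})$, so $q$ appends to $K'$, again an $n$-cup. For $q\in\our$: $q\notin\bT_{n-1}$ forces $\slp(p_{n-2}^1,p_{n-1})>\slp(p_{n-1},q)$, i.e. $p_{n-2}^1,p_{n-1},q$ is a $3$-cap, and together with the cup property of $K'$ one checks (via \Cref{lem:cupcap_combine_common}, or directly through upper/lower hull decomposition) that $p_{n-2}^1,\dots,p_{n-2}^{n-2},p_{n-1},q$ are the vertices of a convex $n$-gon. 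The regions $\odl$ and $\odr$ are handled the same way with $K$ and $p_1$ replacing $K'$ and $p_{n-1}$; the reflection that exchanges cups with caps and reverses the index $i\mapsto n-i$ (under which each $P_i$ becomes a set of the same saturated type) reduces them to the two cases just done.

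\textbf{The side regions (the main obstacle).} For $q\in\oll$ (and $\orr$ by symmetry) the point sits at an intermediate height, so it cannot simply extend $K$ or $K'$, and this is where the bulk of the work lies. The idea is to use the height hierarchy to find the index $j$ with $p_j$ the highest of the $p_i$ lying below $q$, to argue from "$q\in\oll$, above $\overleftrightarrow{C_2}$ and below $\overleftrightarrow{C_{n-2}}$, outside all tubes" that $q$ lies up-and-to-the-left of $P_j$ far enough that $\slp(q,a)<\veps$ for the first point $a$ of a $j$-cup of $P_j$ furnished by \Cref{prop:cc_satexist}, and then concatenate: $q$, the $j$-cup of $P_j$, and one representative from each of $P_{j+1},\dots,P_{n-1}$, a total of $1+j+(n-1-j)=n$ points forming a cup. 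Two degenerate sub-positions need separate care: when $q$ lies directly above $\operatorname{conv}(P)$ one first checks $q$ is above the chord $\lin(p_1,p_{n-1})$ and uses $q$ as a new apex over a full $(n-1)$-cup of representatives; and when $q$ is wedged diagonally between $P_j$ and $P_{j+1}$ one combines a cup through $P_j$ ending at $q$ with an $(n-j-1)$-cap of $P_{j+1}$ via \Cref{lem:cupcap_combine}, invoking property V) to put the relevant line intersections inside the right $\bD_i$. The hard part is precisely the bookkeeping that pins down which concatenation applies in which sub-position, together with the verification—through III), IV), V) and the defining inequalities of $\overleftrightarrow{C_2}$ and $\overleftrightarrow{C_{n-2}}$—that all the required slope inequalities actually hold; the corner regions, by contrast, cost only one comparison each.
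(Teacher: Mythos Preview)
Your handling of the four corner regions is essentially fine and matches the paper's one-line disposal. The genuine gap is in the side regions $\oll,\orr$.

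Your main argument there selects $j$ by height, extracts a pre-existing $j$-cup $a_1,\dots,a_j$ inside $P_j$ via \Cref{prop:cc_satexist}, and prepends $q$. For the concatenation to be a cup you need $\slp(q,a_1)<\slp(a_1,a_2)$. You only claim $\slp(q,a_1)<\veps$, but $\slp(a_1,a_2)$ can be any value in $(-\veps,\veps)$: when $q$ sits just barely above $P_j$ and far to the left, $\slp(q,a_1)$ is negative but close to $0$, so it may well exceed $\slp(a_1,a_2)$. Nothing in the defining inequalities of $\oll$ rules this out. The same defect reappears in your ``wedged diagonally'' sub-case, where you invoke ``a cup through $P_j$ ending at $q$'' without saying why one exists.

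What is missing is the full force of the saturation of each $P_i$, not merely \Cref{prop:cc_satexist}. The paper's argument is: for every $i=2,\dots,n-2$, since $P_i$ is $(i+1,n+1-i)$-cup-cap-saturated, the addition of $q$ to $P_i$ must create either an $(i+1)$-cup or an $(n+1-i)$-cap in $P_i\cup\{q\}$ that contains $q$. For $q\in\orr$ the two boundary curves $\overleftrightarrow{C_2},\overleftrightarrow{C_{n-2}}$ force ``cup'' at $i=2$ and ``cap'' at $i=n-2$, so there is a switch index $j$ yielding simultaneously a $(j+1)$-cup $C_-\subseteq P_j\cup\{q\}$ and an $(n-j)$-cap $C_+\subseteq P_{j+1}\cup\{q\}$, both through $q$. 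Property~VI) then pins $q$ as an endpoint of each, and \Cref{lem:cupcap_combine_common} or \Cref{lem:cupcap_combine} glues $C_-$ and $C_+$ into a convex $n$-gon. This sidesteps entirely the comparison between $\slp(q,a_1)$ and the internal slopes of a fixed cup in $P_j$ that your approach cannot control.
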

\begin{proof}
    Observe that every generic point from $\odl \cup \odr$ is part of an $n$-gon with $P_1 \cup P_2$. Also, every generic point from $\oul \cup \our$ is part of an $n$-gon with $P_{n-2} \cup P_{n-1}$. Hence, we can assume $q \in \oll \cup \orr$. We suppose further that $q \in \orr$, as the proof of the other case is similar. 

    For $i = 2, \dots, n-2$, the saturation properties imply that $q$ is part of either an $(i+1)$-cup or an $(n+1-i)$-cap in $P_i \cup \{q\}$. Since $q$ is part of a $3$-cup in $P_2 \cup \{q\}$ and a $3$-cap in $P_{n-2} \cup \{q\}$, it follows that there exists some $j \in \{2, \dots, n-3\}$ together with
    \begin{itemize}
        \item a $(j+1)$-cup $C_- \eqdef \overline{a_1 \dotsb a_{j+1}}$ from $P_j \cup \{q\}$ containing $q$, and
        \item an $(n-j)$-cap $C_+ \eqdef \overline{b_1 \dotsb b_{n-j}}$ from $P_{j+1} \cup \{q\}$ containing $q$. 
    \end{itemize}

    Using property VI) of our construction and the fact that $q\notin D_i$ for any $i$, we conclude that $q$ must be either the first or last point of $C_-$ and also of $C_+$. Notice that here property VI) is only applicable to cups and caps of size at least 4, but we excluded the regions $\odl, \odr, \oul, \our$ at the beginning to avoid small cups and caps. 
    
    If $q$ is the first point of both or the last point of both, then from \Cref{lem:cupcap_combine_common} it follows that $C_-\cup C_+\cup \{q\}$ is a convex $n$-gon. Thanks to $q \in \orr$, we may assume $b_1 = q$ then. We claim that \Cref{lem:cupcap_combine} is applicable to $C_-$ and $C_+' \eqdef \overline{b_2 \dotsb b_{n-j}}$ (so $a_1, \dots, a_j, q, b_{n-j}, \dots, b_2$ form an $n$-gon). Due to the height hierarchy of $P$, it suffices to show that the point $q$ is below $\lin(b_s, b_t)$ for any $1 \le s < t \le n-j-1$, and $\lin(a_k, q)$ is below $C_+'$ for any $1 \le k \le j$. Indeed, the former is a result of $C_+$ being a cap, and the latter follows from the facts $q \in \orr$ and
    \[
    \min\{x(p) : p \in P_j\} < x(q) < \max\{x(p) : p \in P_{j+1}\}. \qedhere
    \]
\end{proof}

Finally, we are ready to finish the proof and hence conclude this section.
\begin{proof}[Proof of \Cref{thm:gconstruction}]
    Let $P$ be as described in this section. By its construction, we clearly have $|P|=\sum_{i=1}^{n-1}|P_i|$. It then suffices to argue that $P$ is $n$-gon-saturated. 

    First, we show that $P$ is $n$-gon-free. This is parallel to the original Erd\H{o}s--Szekeres proof of their construction. Suppose that $G\subset P$ is in convex position. $G$ cannot be contained in a single $P_i$ because any $n$-gon contains either a $(i+1)$-cup or an $(n+1-i)$-cap. So, we can assume that $G$ intersects at least two of $P_1, \dots, P_{n-1}$. It follows from the height hierarchy that any four points $q_1 \in P_i, \, q_2, q_3 \in P_j, \, q_4 \in P_k$ with $i<j<k$ are not in convex position. Thus, there are at most two of $P_1, \dots, P_{n-1}$ containing more than one point of $G$, and the other points of $G$ are between these two groups. So, there exist $1 \le i_1 < i_2 \le n-1$ such that $|P_j \cap G| \le 1$ for $j = i_1+1, \dots, i_2-1$ and $|P_j\cap G|=0$ for $j<i_1$ and $j>i_2$. Since $G$ is not contained in a single group, $P_{i_1} \cap G$ must be a cup and $P_{i_2} \cap G$ must be a cap. From the saturation properties of $P_{i_1}, P_{i_2}$ we obtain
    \[
    |G|\le i_1+(i_2-i_1-1)\cdot 1+(n-i_2)=n-1. 
    \]

    We then argue that any $q \notin P$ is part of an $n$-gon as long as $P \cup \{q\}$ is in general position. Consider all lines spanned by $P$. These lines cut the plane into polygonal cells and since $P\cup \{q\}$ is in general position $q$ lies in the interior of a cell. If any other point in its cell is part of a convex $n$-gon, then $q$ is part of a convex $n$-gon with the same $n-1$ other vertices from $P$. Hence, if needed, we can move $q$ within its cell such that $P \cup \{q\}$ becomes generic. Then, by putting \Cref{prop:disk,prop:tube,prop:outer} together, the proof is complete.
\end{proof}

\section{Final remarks} \label{sec:remarks}

Our current upper bounds on $\sat_{\sf c}(k, \ell)$ and $\sat_{\sf g}(n)$ are exponential, while the lower bounds are linear ($\sat_{\sf g}(n) \ge n-1$ is trivial). The obvious problem is to obtain better bounds. 

\begin{problem}
    What is the correct asymptotics for $\sat_{\sf c}(k, \ell)$ and $\sat_{\sf g}(n)$? 
\end{problem}

It is known that the Ramsey number equals to $\binom{k+\ell-4}{k-2}$ for monotone paths in 3-uniform complete ordered hypergraphs with transitive 2-colorings, see e.g. \cite{MoshkovitzShapira2014}.

\begin{problem}
    Is the saturation number equal to $\binom{k+\ell-4}{k-2}$ again for monotone paths in $3$-uniform complete ordered hypergraphs with transitive $2$-colorings? 
\end{problem}

Inside a generic planar point set $P$, a subset $S$ is called an \textit{$n$-hole} if $S$ forms an $n$-gon whose convex hull contains no points of $P$ in its interior. A construction due to Horton~\cite{horton1983sets} shows that there are arbitrarily large point sets without $n$-holes for every $n \geq 7$. 

\begin{problem}
    For $n \ge 7$, is the saturation number for $n$-holes bounded (i.e., finite)? 
\end{problem}



\small
\bibliographystyle{alphaabbrv-url}
\bibliography{references}

\appendix

\section{Supplementary code for the proof of \texorpdfstring{\Cref{thm:sat45}}{Theorem~10}} \label{sec:code}

We used the following SageMath program to check that the set \begin{equation*}
    P=\{(-60, 40), (-40, 20), (-20, 16), (0, 10), (5, -50), (15, -40), (25, -40), (125, -230)\}.
\end{equation*} is saturated for $4$-cups and $5$-caps. Note that our conclusion is under the fact that $P$ contains no $4$-cups or $5$-caps which is not checked by this program itself.
\lstinputlisting[breaklines=true, frame=tb, language=Python, basicstyle={\footnotesize}]{8points.py}

\end{document}